\documentclass[12pt]{amsart}
\usepackage{amssymb}
\usepackage{amsmath}
\usepackage{versions}
\usepackage[usenames,dvipsnames] {color}

\textwidth 5.8 in
\textheight 9 in
\setlength{\oddsidemargin}{0.25in}
\setlength{\evensidemargin}{0.25in}
\setlength{\topmargin}{ -0.5 in}

\def\gam{\Gamma}
\def\d{\mathbb{D}}

\def\t{\mathbb{T}}
\def\r{\mathbb{R}}
\def\Z{\mathbb{Z}}
\def\ctwo{\mathbb{C}^2}

\def\royal{\mathcal{R}}
\def\rnk{\mathcal{R}^{n,k}}

\def\be{\begin{equation}}
\def\ee{\end{equation}}

\def\set#1#2{\{ #1 \, : \, #2\}}

\def\gaminn{\Gamma\text{-inner}}
\def\s0{s_0}
\def\p0{p_0}
\let\phi\varphi
\let\epsilon\varepsilon


\newtheorem{theorem}{Theorem}[section]

\newtheorem{lemma}[theorem]{Lemma}
\newtheorem{proposition}[theorem]{Proposition}

\numberwithin{equation}{section}
\newtheorem{defin}[theorem]{Definition}

\newtheorem{prop}[theorem]{Proposition}

\newtheorem{example}[theorem]{Example}

\newtheorem{definition}[theorem]{Definition}

\newtheorem{remark}[theorem]{Remark}

\newtheorem{fact*}[theorem]{Fact}
\DeclareMathOperator\hol{Hol}

\DeclareMathOperator\re{Re}


\newcommand\e{\mathrm{e}}

\newcommand\ii{\mathrm{i}}

\newcommand{\M}{b\Gamma}

\newcommand{\T}{\mathbb{T}}

\newcommand{\D}{\mathbb{D}}
\newcommand{\C}{\mathbb{C}}
\newcommand{\G}{\mathcal{G}}
\newcommand\Ga{\Gamma}

\newcommand{\R}{\mathbb{R}}

\newcommand{\twopartdef}[4]
{
	\left\{
		\begin{array}{ll}
			#1 &  #2 \\  \\
			#3 &  #4
		\end{array}
	\right.
}

\newcommand\al{\alpha}

\newcommand\la{\lambda}

\newcommand\si{\sigma}

\newcommand\half{{\tfrac{1}{2}}}

\newcommand\df{\stackrel{\rm def}{=}}

\newcommand\beq{\begin{equation}}

\newcommand\eeq{\end{equation}}

\newcommand\bbm{\begin{bmatrix}}
\newcommand\ebm{\end{bmatrix}}
\newcommand\bpm{\begin{pmatrix}}
\newcommand\epm{\end{pmatrix}}
\newcommand\ord{\mathrm{ord}}

\let\phi\varphi
\numberwithin{equation}{section}
\begin{document}
\title[Rational $\Gamma$-inner functions]{Algebraic and geometric aspects of rational $\Gamma$-inner functions}
\author{Jim Agler, Zinaida A. Lykova and N. J. Young}
\date{Submitted 23rd September 2014, Revised 21st September 2017}

\begin{abstract} 
The set
\[
\G \df  \{(z+w,zw):|z| < 1,|w| < 1\} \subset \C^2
\]
has intriguing complex-geometric properties; it has a 3-parameter group of automorphisms, its distinguished boundary is a ruled surface homeomorphic to the M\"obius band and it has a special subvariety which is the only complex geodesic of $\G$ that is invariant under all automorphisms.  We exploit the geometry of $\G$ to develop an explicit and detailed structure theory for the rational maps from the unit disc to the closure $\Ga$ of $\G$ that map the boundary of the disc to the distinguished boundary of $\Ga$.
\end{abstract}

\subjclass[2010]{Primary  32F45, 30E05, 93B36, 93B50}


\thanks{The first author was partially supported by National Science Foundation Grants  DMS 1361720 and 1665260. The second and third authors were partially Partially supported by the Engineering and Physical Sciences grant 
 EP/N03242X/1. The collaboration was partially supported by London Mathematical Society Grant 41219 and by Newcastle University.}

\maketitle

\section{Introduction}\label{introJuly2014}
Recall that a {\em rational inner function} is a rational map $h$ from the open unit disc $\D$
in the complex plane $\C$ to its closure $\D^-$ with the property that $h$ maps the unit circle $\T$ into itself.  A basic
unsolved problem in $H^\infty$ control theory led us to consider ``inner mappings" from $\D$ to certain domains in $\C^d$ with $d>1$.  For example, a special case of the problem of robust stabilization under structured uncertainty, or the  $\mu$-synthesis problem \cite{Do,dullerud,AY99},  leads naturally to a class of domains of which a typical member is
\be\label{defG}
\G\df\{ (z+w,zw): z,w \in\D\} \subset \C^2,
\ee
the {\em open symmetrized bidisc}.  It would be useful for the control community if one could develop a theory of analytic maps from $\D$ to $\G$ and its generalizations  parallel to the rich function theory of $\D$, created by such masters as Stieltjes, F. and M. Riesz, Carath\'eodory, Herglotz, Pick and Nevanlinna, which has been so effective in circuits and systems engineering and in statistical prediction theory, among other applications.  The appropriate analog of rational inner functions is likely to play a significant role in such a theory.  A number of recent papers \cite{KZ2014,costara,ALY12,ALY13_2,ogle} and the present one provide evidence that a detailed analysis is indeed possible.
 
We denote the closure of $\G$ by $\Ga$ and we define a {\em rational $\Ga$-inner function} to be a rational analytic map $h:\D\to \Ga$ with the property that $h$ maps $\T$ into the distinguished boundary $b\Ga$ of $\Ga$.   Here, $b\Ga$  is the smallest closed subset of $\Ga$ on which every continuous function on $\Ga$ that is analytic in $\G$ attains its maximum modulus. 
Rational $\Ga$-inner functions have many similarities with rational inner functions, but also have some striking differences, which stem from the fact that $\G$ has a more subtle complex geometry than other well-studied domains in $\C^2$, such as the bidisc $\D^2$ and the Euclidean ball $\mathbb{B}_2$.  

Here are three points of difference between $\D^2$ and $\mathbb{B}_2$ on the one hand and $\G$ on the other.
Firstly, whereas $\D^2$ and  $\mathbb{B}_2$ are homogeneous (so that the holomorphic automorphisms of these domains act transitively),
$\G$ is inhomogeneous.  The orbit of $(0,0)$ under the group of automorphisms of $\G$ is the intersection of $\G$ with the
variety 
\be\label{eq1.30}
\royal\df \set{(2z,z^2)}{z\in\C} \notag
\ee
which we call the {\em royal variety}.   $\royal\cap \G$ is a complex geodesic in $\G$ and is the only complex geodesic that is left invariant by all automorphisms of $\G$.
The variety $\royal$ plays a central role in the function theory of $\G$.

Secondly, the distinguished boundary of $\G$ differs markedly in its topological properties from those of $\D^2$ and $\mathbb{B}_2$.
The distinguished boundaries of $\D^2$ and $\mathbb{B}_2$ are the 2-torus and the $3$-sphere respectively, which are smooth manifolds without boundary. In contrast, the distinguished boundary $b\Ga$ of $\G$
is homeomorphic to a M\"obius band together with its boundary, which is a circle.  For a rational $\Ga$-inner function $h$ the curve $h(\e^{it}), 0\leq t<2\pi$, lies in $b\Ga$ and may or may not touch the edge, with consequences for the algebraic and geometric properties of $h$.

Thirdly, neither the 2-torus nor the $3$-sphere contains any line segments, and therefore strict convex combinations of their inner functions can {\em never} be inner.  $b\Ga$, on the other hand, is a ruled surface, and so it {\em can} happen that a strict convex combination of rational $\Ga$-inner functions is a function of the same type. There is thus a notion of non-extremal $\Ga$-inner function.

These three geometric properties of $\G$ lead to dramatic phenomena in the theory of rational $\Ga$-inner functions  that have no analog for classical inner functions.     The first of our three main theorems, a corollary of Theorem \ref{prop3.10}, is as follows.
\begin{theorem}\label{prop1.20}
If $h$ is a  nonconstant  rational $\gaminn$ function then either $h(\d^-) = \royal\cap \Ga$ or $h(\d^-)$ meets $\royal$ exactly $\deg(h)$ times.
\end{theorem}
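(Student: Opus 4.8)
The plan is to reduce the statement to counting, by the argument principle, the zeros in $\D^-$ of a single rational function. Write $h=(s,p)$ with $s,p$ rational, analytic on $\D$ and continuous on $\D^-$. Since $\royal=\{(s,p)\in\C^2:s^2=4p\}$, a point $h(\zeta)$ lies on $\royal$ exactly when $g:=s^2-4p$ vanishes at $\zeta$, so it suffices to analyse the zero set of $g$ in $\D^-$. If $g\equiv0$, then $\omega:=s/2$ is rational and analytic on $\D^-$ and $h=(2\omega,\omega^2)$; from $h(\T)\subseteq b\Ga\cap\royal=\{(2\eta,\eta^2):\eta\in\T\}$ we get $|\omega|=1$ on $\T$, so $\omega$ is a finite Blaschke product, nonconstant because $h$ is, whence $\omega(\D^-)=\D^-$ and $h(\D^-)=\{(2\eta,\eta^2):\eta\in\D^-\}=\royal\cap\Ga$; this is the first alternative. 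From now on assume $g\not\equiv0$.

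I first record what is needed about $p$ and about $b\Ga$. The function $p$ is analytic on $\D^-$ and unimodular on $\T$ (the second coordinate on $b\Ga$ is unimodular), hence a finite Blaschke product; it is nonconstant, for if $p\equiv c$ with $|c|=1$ then for every $\zeta\in\D$ the point $(s(\zeta),c)$ lies in $\Ga$, which forces the two roots of $X^2-s(\zeta)X+c$ onto $\T$, so $h(\D)$ lies in the fibre $\{(s,c):(s,c)\in\Ga\}$, a line segment, and the holomorphic function $s$, taking values in a line, must be constant. Put $n:=\deg p$. Using $b\Ga=\{(z+w,zw):z,w\in\T\}$: for $\zeta\in\T$ the roots $z,w$ of $X^2-s(\zeta)X+p(\zeta)$ lie on $\T$, and hence on $\T$ we have (i) $g=(z-w)^2$, so $|g|=|z-w|^2=4-|z+w|^2=4-|s|^2$; and (ii) $\bar s=s/p$, so, using $\bar p=1/p$ and $p\bar s=s$, $\;p^2\overline g=p^2(\bar s^2-4\bar p)=(p\bar s)^2-4p=s^2-4p=g$. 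Thus $g=p^2\overline g$ on $\T$; in particular $g/|g|=\pm p$ wherever $g\neq0$ on $\T$.

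The crux is to show that every zero of $g$ on $\T$ has even order. Put $\tilde s(\zeta):=\overline{s(1/\bar\zeta)}$ and $r:=4-s\tilde s$, a rational function with $r=4-|s|^2$ on $\T$; by (i), $r\geq0$ on $\T$, and $\tilde r=r$, so $r$ is real on $\T$ and hence vanishes to even order at each of its zeros on $\T$. By (i) again, $g\tilde g=|g|^2=(4-|s|^2)^2=r^2$ on $\T$, hence $g\tilde g=r^2$ identically. At $\zeta_0\in\T$ one has $\ord_{\zeta_0}\tilde g=\ord_{\zeta_0}g$: if $g(u)=\sum_k a_k(u-\zeta_0)^k$ near $\zeta_0$ then $\tilde g(\zeta)=\sum_k\bar a_k(1/\zeta-\bar\zeta_0)^k$, and $1/\zeta-\bar\zeta_0$ is holomorphic with a simple zero at $\zeta_0$. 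Therefore $\ord_{\zeta_0}g=\ord_{\zeta_0}r$ is even. (Geometrically, $h$ meets the edge $\royal\cap b\Ga$ tangentially, and near such $\zeta_0$ the roots $z,w$ are holomorphic with $z-w$ of order $\tfrac12\ord_{\zeta_0}g$.)

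Finally I count. The poles of $s$ and of $p$ lie off $\D^-$, so $g$ is analytic on a neighbourhood of $\D^-$, and I apply the argument principle to $g$ along $\T$ indented by small semicircles into $\D$ around the finitely many, even-order, zeros of $g$ on $\T$. On the circular part of the contour $g/|g|=\epsilon p$ with $\epsilon\in\{\pm1\}$ locally constant; since the boundary zeros have even order this sign does not jump across them, so it is one fixed value, and as the indentations shrink the circular part contributes $\Delta\arg g=\Delta\arg p\to2\pi n$ (the winding number of the Blaschke product $p$ being $n$), while the semicircle around a zero of order $\ell$ contributes $-\pi\ell$ to $\Delta\arg g$. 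Dividing by $2\pi$, the number of zeros of $g$ in the open disc, with multiplicity, equals $n-\tfrac12\sum_{\zeta_0\in\T}\ord_{\zeta_0}g$. Counting each interior intersection of $h(\D^-)$ with $\royal$ with its intersection multiplicity and each boundary intersection, of multiplicity $2k$, as $k$ meetings, the total is exactly $n=\deg(h)$, which is the second alternative. The genuinely delicate step is the evenness of the boundary orders, and the resulting half-integer bookkeeping in the winding-number count; granted $g=p^2\overline g$ on $\T$ and that evenness, everything else is routine.
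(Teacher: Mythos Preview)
Your proof is correct and takes a genuinely different route from the paper's.

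The paper first establishes a polynomial representation $h=(E/D,D^{\sim n}/D)$ (Proposition~\ref{prop2.20}, which itself relies on \cite[Corollary~6.10]{ALY12}), then works with the \emph{royal polynomial} $R=4DD^{\sim n}-E^2$. The key structural facts are that $R$ is $2n$-symmetric (i.e.\ $R^{\sim 2n}=R$) and that $\lambda^{-n}R(\lambda)=4|D|^2-|E|^2\ge 0$ on $\T$, whence boundary zeros of $R$ have even order. The counting is then pure polynomial algebra: $n$-symmetry pairs the zeros of $R$ inside $\D$ with those outside, and a degree count (equations~\eqref{eq3.32}--\eqref{eq3.34}) yields $\ord_\D(R)+\tfrac12\ord_\T(R)=\deg(h)$.

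You bypass the polynomial representation entirely and work directly with the rational function $g=s^2-4p$. Your identity $g=p^2\bar g$ on $\T$ is the analytic avatar of the paper's $2n$-symmetry of $R$, and your $r=4-s\tilde s\ge 0$ on $\T$ plays the role of $\lambda^{-n}R\ge 0$. Your evenness argument via $g\tilde g=r^2$ is essentially the same phenomenon, seen rationally rather than polynomially. Where the approaches really diverge is the count: instead of exploiting the zero-pairing of symmetric polynomials, you use the argument principle along an indented contour, reading the winding number off the relation $g/|g|=\pm p$ on $\T$. This gives a clean, self-contained derivation of $\ord_\D(g)+\tfrac12\ord_\T(g)=\deg p$ that does not invoke the external result behind Proposition~\ref{prop2.20}. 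The paper's approach, on the other hand, introduces the royal polynomial $R$ as an object in its own right, which pays dividends later (it is the organizing tool for Sections~\ref{royalnodes} and~\ref{convexity}).
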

Here $\deg(h)$ is the degree of $h$, defined in a natural way by means of fundamental groups (Section \ref{sec3}). 
In Proposition \ref{degh} we show that, for any rational $\Gamma$-inner function $h= (s,p)$, $\deg (h)$ is equal to the degree $\deg (p)$ (in the usual sense) of the finite Blaschke product $p$.
The precise way of counting the number of times that $h(\d^-)$ meets $\royal$ is also described in Section \ref{sec3}.  We call the points $\la\in\d^-$ such that $h(\la)\in\royal$ the {\em royal nodes} of $h$ and, for such $\la$, we call $h(\la)$ a {\em royal point } of $h$.

The second main result describes the construction of rational $\Ga$-inner functions of prescribed degree from certain interpolation data; one can think of this result as an analog of the expression for a finite Blaschke product in terms of its zeros.   Concretely, the following result is a corollary of Theorem \ref{thm4.10}.
\begin{theorem}\label{thm1.15}
Let $n$ be a positive integer and suppose given 
\begin{enumerate}
\item  points $\alpha_1, \alpha_2,\ldots,\alpha_{k_0} \in \d$ and $\tau_1, \tau_2,\ldots,\tau_{k_1} \in \t$, where $2k_0 +k_1 = n$;
\item points $\si_1,\dots,\si_n$  in $\d^-$ distinct from $\tau_1,\ldots,\tau_{k_1}$.
\end{enumerate}
There exists a rational $\gaminn$ function $h=(s,p)$ of degree $n$ such that the zeros of $s$ are 
$ \alpha_1, \alpha_2,\ldots,\alpha_{k_0},\tau_1, \tau_2,\ldots,\tau_{k_1}$ and the royal nodes of $h$ are $\si_1,\dots,\si_n$.
\end{theorem}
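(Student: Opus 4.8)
\noindent The plan is to build $h$ explicitly. For a polynomial $f$ put $f^{\sharp}(z):=z^{\deg f}\,\overline{f(1/\bar z)}$. I would start from the structural shape of a rational $\gaminn$ function $h=(s,p)$ of degree $n$ (the generic case $p(0)\neq 0$): one has $p=\eta\,D^{\sharp}/D$ and $s=E/D$, where $\eta\in\T$, $D$ is a polynomial of degree $n$ with no zero in $\D^{-}$, and $E$ is a polynomial with $\deg E\le n$, $E=\eta E^{\sharp}$ and $|E|\le 2|D|$ on $\T$. In these terms the zeros of $s$ in $\D^{-}$ are precisely the zeros of $E$ there; and since $(s,p)\in\royal$ if and only if $s^{2}=4p$, the royal nodes of $h$ are precisely the zeros in $\D^{-}$ of
\[
N:=4\eta\,DD^{\sharp}-E^{2},
\]
a polynomial of degree $2n$ whose image under $\sharp$ is a unimodular multiple of itself, so that its zeros fall into pairs $\{\beta,1/\bar\beta\}$ with exactly $n$ of them (suitably counted) in $\D^{-}$, in keeping with Theorem~\ref{prop1.20}. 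Conversely — the easy half of the structure theory — any triple $(D,E,\eta)$ of this kind yields a rational $\gaminn$ function of degree $n$: indeed $\Ga$ is compact and polynomially convex, so applying the maximum modulus principle to $P\circ h$ for polynomials $P$ promotes $h(\T)\subset b\Ga$ to $h(\D^{-})\subset\Ga$, while $\deg h=\deg p=n$ by Proposition~\ref{degh}. Hence it suffices to exhibit a triple $(D,E,\eta)$ realising the prescribed data.

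The prescribed zeros of $s$ and the relation $E=\eta E^{\sharp}$ (which forces a zero of $E$ at $1/\bar\alpha_{j}$ alongside each zero at $\alpha_{j}$) leave only the choice
\[
E(z)=t\prod_{j=1}^{k_{0}}(z-\alpha_{j})(1-\bar\alpha_{j}z)\ \prod_{l=1}^{k_{1}}(z-\tau_{l})\qquad(t>0\ \text{free}),
\]
a polynomial of degree $2k_{0}+k_{1}=n$ that is self-inversive up to a unimodular constant, and this constant fixes $\eta$. Likewise the prescribed royal nodes force $N=cQ$ for some $c\in\C$, where
\[
Q(z)=\prod_{\sigma_{j}\in\D}(z-\sigma_{j})(1-\bar\sigma_{j}z)\ \prod_{\sigma_{j}\in\T}(z-\sigma_{j})^{2}
\]
has degree $2n$ and its only zeros in $\D^{-}$ are $\sigma_{1},\dots,\sigma_{n}$. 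So the problem boils down to solving
\[
4\eta\,DD^{\sharp}=E^{2}+cQ
\]
for a polynomial $D$ of degree $n$ with no zero in $\D^{-}$, together with the scalars $t>0$ and $c\in\C$.

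This is a spectral-factorisation problem and is the heart of the matter. Writing $c=\rho\zeta$ with $\rho>0$, $\zeta\in\T$, and $E_{0}=E/t$, and using $DD^{\sharp}=z^{n}|D|^{2}$ on $\T$, the displayed identity is equivalent, on $\T$, to
\[
4|D|^{2}=t^{2}|E_{0}|^{2}+\rho\,q,\qquad q(z):=z^{-n}\bar\eta\zeta\,Q(z).
\]
A short computation (using that $E_{0}^{\sharp}$ and $Q^{\sharp}$ are unimodular multiples of $E_{0}$ and $Q$) shows that $\zeta$ can be chosen — it is one of the two square roots of a certain unimodular number — so that $q$ is real and $\ge 0$ on $\T$, vanishing exactly at the $\sigma_{j}$ lying on $\T$. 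This is where the hypothesis that $\sigma_{1},\dots,\sigma_{n}$ are distinct from $\tau_{1},\dots,\tau_{k_{1}}$ is crucial: the only zeros of $E_{0}$ on $\T$ are the $\tau_{l}$, where $q>0$, so $t^{2}|E_{0}|^{2}+\rho\,q$ is strictly positive on all of $\T$ for every $t,\rho>0$. By the Fej\'er--Riesz theorem it therefore equals $4|D|^{2}$ on $\T$ for a polynomial $D$ of degree $n$, which we take with all zeros in $\C\setminus\D^{-}$; for all but one value of $\rho$ (with $t$ fixed) one has $\deg D=n$ exactly. Then the two sides of $4\eta\,DD^{\sharp}=E^{2}+cQ$ are polynomials of degree $\le 2n$ agreeing on the circle, hence equal; in particular $N=cQ$, while on $\T$, $|E|^{2}=t^{2}|E_{0}|^{2}\le 4|D|^{2}$, i.e.\ $|s|\le 2$.

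Setting $h:=(E/D,\ \eta D^{\sharp}/D)$, the remaining verifications are straightforward: $D$ and $D^{\sharp}$ are coprime (their zeros lie in $\C\setminus\D^{-}$ and in $\D$ respectively), so $p$ is a finite Blaschke product of degree $n$ and $s$ is holomorphic on $\D^{-}$; on $\T$ we have $|p|=1$, $s=\bar sp$ and $|s|\le 2$, so $h(\T)\subset b\Ga$, and by the polynomial-convexity argument of the first paragraph $h$ is a nonconstant rational $\gaminn$ function of degree $n$; by construction the zeros of $s$ in $\D^{-}$ are $\alpha_{1},\dots,\alpha_{k_{0}},\tau_{1},\dots,\tau_{k_{1}}$ and the royal nodes (the zeros of $N=cQ$ in $\D^{-}$) are $\sigma_{1},\dots,\sigma_{n}$. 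I expect the main obstacle to be the positivity arranged in the third paragraph — juggling the unimodular constants $\eta,\zeta$ and the sign of $q$ so that the trigonometric polynomial to be factored is strictly positive on $\T$; the remaining points (notably $\deg D=n$, where $2k_{0}+k_{1}=n$ is used, and the degenerate cases $p(0)=0$ arising when some $\alpha_{j}$ or $\sigma_{j}$ is $0$) are routine.
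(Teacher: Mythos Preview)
Your approach is correct and is essentially the same as the paper's: build $E$ from the prescribed zeros of $s$, build the would-be royal polynomial from the prescribed nodes, and use Fej\'er--Riesz to spectrally factorize the resulting positive trigonometric polynomial to obtain $D$, the distinctness hypothesis guaranteeing strict positivity on $\T$ and hence that $D$ is zero-free on $\D^-$. The paper sidesteps what you call ``the main obstacle'' by working with $f^{\sim n}(\lambda)=\lambda^{n}\overline{f(1/\bar\lambda)}$ rather than your $f^{\sharp}$, and by inserting the unimodular constants $i\e^{-i\theta_j/2}$ into the linear factors of $E$ (and analogous constants into the royal polynomial) so that $E^{\sim n}=E$ and $R^{\sim 2n}=R$ hold exactly; this eliminates the bookkeeping with $\eta$ and $\zeta$, and since $\deg D^{\sim n}=n$ whenever $D(0)\neq 0$ it also makes the verification that $\deg p=n$ automatic.
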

The proof is constructive: it gives a prescription for the construction of a $2$-parameter family of such functions $h$, subject to the computation of  Fej\'er-Riesz factorizations of certain non-negative functions on the circle.

The third main result of the paper is as follows.
\begin{theorem}\label{2k>n}
A $\Ga$-inner function of degree $n$ having $k$ royal nodes in $\T$, counted with multiplicity, is an extreme point of the set of rational $\Ga$-inner functions if and only if $2k>n$.
\end{theorem}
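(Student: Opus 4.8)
The plan is to characterize extreme points of the (convex) set of rational $\Ga$-inner functions by testing whether a given $h=(s,p)$ of degree $n$ admits a nontrivial perturbation $h_t = h + t g$ that remains rational $\Ga$-inner for $t$ in a neighborhood of $0$. Since the $p$-coordinate of a $\Ga$-inner function is a Blaschke product of degree $\deg(p)=\deg(h)=n$, and a strict convex combination $\half(h_1+h_2)$ has $p$-coordinate $\half(p_1+p_2)$, the fact that $\D^2$'s distinguished boundary $\T^2$ contains no segments (together with the structure of $\M$) forces both $p_1=p_2=p$ and the second coordinates to share the same boundary behaviour; so extremality reduces to a question about which directions $g=(\psi,0)$, with $\psi$ a rational function vanishing appropriately, keep $(s+t\psi,p)$ inside $\Ga$ with boundary values in $\M$. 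I would first pin down, using the description of $\M$ as a ruled surface and the inequality characterizing $\Ga$ (namely $(s,p)\in\Ga$ iff $|s|\le 2$, $|p|\le 1$ and $|s-\bar s p|\le 1-|p|^2$, with equality on $\M$), exactly what it means for the perturbed curve to stay $\Ga$-inner: on $\T$ one needs $|s+t\psi - \overline{(s+t\psi)}\,p| = 1-|p|^2$, i.e. a pointwise constraint on $\psi$ along the circle.

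Next I would translate that pointwise constraint on $\T$ into a finite-dimensional linear condition. Writing the $\M$-equality as $\re\big(\overline{\eta}(s-\bar s p)\big)$-type relations (where $\eta$ is the unimodular ``slope'' function of the ruling), the admissible perturbation directions $\psi$ form a real-linear space, and $h$ is extreme precisely when that space is $\{0\}$. Here the royal nodes enter: at a royal node $\la\in\T$ the curve touches the edge of the M\"obius band, which imposes an extra (second-order / tangency) condition on $\psi$ at $\la$, effectively killing one more degree of freedom per such node (two, counted with multiplicity in the right sense). Counting dimensions: a degree-$n$ rational $\Ga$-inner function is, roughly, determined by $n$ parameters for $p$ (Blaschke) plus the data needed to pin $s$ subject to the $\Ga$-inner inequalities — the earlier results (Theorem \ref{thm4.10} / Theorem \ref{thm1.15}) give that $s$ is governed by a Fej\'er--Riesz factorization of a nonnegative trigonometric polynomial of degree controlled by $n$, whose coefficient space has real dimension growing like $n$, while each royal node in $\T$ imposes $2$ real constraints (a positive semidefinite quadratic form vanishing). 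The threshold $2k>n$ should emerge exactly as the condition that these $2k$ constraints exhaust the $\sim n$ available real parameters, leaving no room to move.

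Concretely I would proceed as follows. (i) Reduce extremality to: the only $\psi$ for which $(s+t\psi,p)$ is rational $\Ga$-inner for all small real $t$ is $\psi=0$; justify that only the $s$-coordinate can be perturbed. (ii) Using the defining inequality of $\Ga$ and its equality form on $\M$, show that on $\T$ one must have $s-\bar s p + t(\psi - \bar\psi p)$ of constant modulus $1-|p|^2$, hence (expanding in $t$) both $\re\big(\overline{(s-\bar s p)}(\psi-\bar\psi p)\big)=0$ and $|\psi-\bar\psi p|^2$ forced to vanish wherever $1-|p|^2=0$, i.e. at the royal nodes on $\T$; with multiplicity this gives an order-of-vanishing condition. (iii) Parametrize $s$ via the Fej\'er--Riesz data from Theorem \ref{thm4.10}: the nonnegative function whose factorization yields $s$ lives in a real vector space $V$ with $\dim_{\R} V = n+1$ or so, and royal nodes in $\T$ correspond to prescribed real zeros of that nonnegative function, each (with multiplicity) cutting the dimension by $2$. (iv) Conclude that the perturbation space has real dimension $\max\{0,\,n+1-2k\}$ — up to the normalization that removes the one trivial (scaling/rotation) direction — so it is $\{0\}$ exactly when $2k\ge n+1$, i.e. $2k>n$. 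The main obstacle I anticipate is step (iii): getting the dimension count exactly right, in particular correctly accounting for the multiplicity of royal nodes in $\T$ (tangency of the curve to the edge of $\M$) and for the constraints already "used up" by the requirement that $p$ be a Blaschke product and that $s,p$ satisfy the compatibility forcing $(s,p)(\T)\subset\M$; a careful bookkeeping of the real-analytic structure near the edge of the M\"obius band, probably via the earlier structural theorems on royal nodes (Section \ref{sec3}), is what makes the clean inequality $2k>n$ fall out.
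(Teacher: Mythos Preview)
Your overall strategy---reduce to perturbations $(s+t\psi,p)$ with $p$ fixed, and then count constraints imposed by royal nodes on $\T$---is the right one, and the paper follows essentially the same outline using the polynomial representation $h=(E/D,D^{\sim n}/D)$.  But two concrete things in your plan need fixing.

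First, step (ii) contains a genuine confusion.  You write that $|\psi-\bar\psi p|^2$ is forced to vanish ``wherever $1-|p|^2=0$, i.e.\ at the royal nodes on $\T$''.  In fact $|p|=1$ \emph{everywhere} on $\T$, so the condition $s'-\bar{s'}p=0$ on $\T$ is a linear constraint on $\psi$ that holds globally and has nothing to do with royal nodes; it simply forces $\psi=g/D$ with $g$ an $n$-symmetric polynomial.  The constraint that actually sees the royal nodes is the inequality $|s+t\psi|\le 2$, because $|s(\tau)|=2$ on $\T$ precisely at the royal nodes (Lemma~\ref{lem5.10}).  Your geometric remark about the curve touching the edge of the M\"obius band is pointing at the right thing, but the analytic identification in (ii) is off.

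Second, the step you flag as ``the main obstacle'' really is one, and your sketch does not address it.  The crux of the $2k>n$ direction is: if $h=\tfrac12 h_+ + \tfrac12 h_-$ and $\tau\in\T$ is a royal node of $h$ of multiplicity $\nu$, then $\tau$ is a royal node of $h_\pm$ of multiplicity \emph{at least} $\nu$, so that the perturbation $g=E_+-E$ vanishes to order $2\nu$ at $\tau$ and hence has degree $\ge 2k>n$, a contradiction.  Proving that multiplicity transfers to the convex pieces is nontrivial; the paper does it by a local real-analytic analysis in a chart on $b\Gamma$ near the edge (Lemma~\ref{tricky}), using that $4-|s(\e^{it})|^2$ vanishes to order $2\nu$ and sandwiching one of $|s_\pm|$ between $|s|$ and $2$.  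Your dimension-count formulation hides exactly this point.  For the $2k\le n$ direction the paper avoids an abstract independence argument altogether and simply writes down an explicit $n$-symmetric polynomial $g$ of degree $\le n$ vanishing to the required orders at the $\tau_j$, then checks directly that $4|D|^2-|E\pm tg|^2\ge 0$ on $\T$ for small $t$; this is cleaner than proving the constraints are linearly independent.
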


Thus a $\Ga$-inner function $h$ is an extreme point of the set of rational $\Ga$-inner functions if and only if the curve $h(\e^{it})$ touches the edge of the M\"obius band $b\Ga$ more than $\half \deg(h)$ times, counted with multiplicity.


\section{Properties of $\Ga$ and $\Gamma$-inner functions}\label{sec2}

We shall often use the co-ordinates $(s,p)$ for points in the symmetrized bidisc $\Ga$, chosen to suggest `sum' and `product'.
The following results afford useful criteria for membership of $\gam$, $\partial \Gamma$ and $\M$ \cite{AY04}.
\begin{proposition}\label{prop2.10}
Let $(s,p) \in \ctwo$.  The point $(s,p)$ lies in $ \gam$ if and only if
\be\label{eq2.30}
|s| \le 2 \; \text{and} \; |s-\overline{s}p| \le 1-|p|^2. \notag
\ee
The point $(s,p)$ lies in $\M$ if and only if
\be\label{eq2.40}
|s| \le 2,\ |p| =1,\; \text{and} \ s-\overline{s}p = 0. \notag
\ee
The point $(s,p)$ lies in  $\partial \Gamma$ if and only if
\be\label{eq2.50}
|s|\le 2 ,\ \text{and}\ |s- \overline{s} p| =  1 - |p|^2, \notag
\ee
which is so if and only if
there exist $z\in\T \ \text{and}\ w \in \D^- \ \text{such that}\ s=z+w, \ p=zw$.
\end{proposition}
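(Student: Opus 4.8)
The plan is to establish the three equivalences in turn, using only elementary manipulations with the M\"obius transformation $w \mapsto (s-\bar s p)/(1-|p|^2)$ when $|p|<1$, together with a direct parametrisation argument on the boundary. The key observation throughout is that $(s,p) = (z+w, zw)$ with $z,w$ in suitable sets, and that $z,w$ are the roots of $X^2 - sX + p = 0$; the discriminant and the root-locating machinery are what drive the whole argument.

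First I would treat $\Gamma$ itself. Recall that $\Gamma = \{(z+w,zw):|z|\le 1,\ |w|\le 1\}$, so $(s,p)\in\Gamma$ iff the polynomial $X^2 - sX + p$ has both roots in $\overline{\D}$. By Cohn's rule (or the Schur--Cohn criterion), a degree-2 polynomial $X^2 - sX + p$ has both roots in $\overline{\D}$ precisely when $|p|\le 1$ and $|\bar s - s\bar p| \le |1-|p|^2|$ holds with the appropriate sign, i.e. $|s - \bar s p|\le 1-|p|^2$; one must also separately check that when $|p|=1$ the condition $s - \bar s p = 0$ emerges, which I would fold in by noting $1-|p|^2 = 0$ forces $s=\bar s p$. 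The side condition $|s|\le 2$ is then automatic: from $|s-\bar s p|\le 1-|p|^2$ one gets $|s|(1-|p|)\le |s-\bar s p| + |s|\,|p| - |s|\,|p|$... more cleanly, $|s| \le |s - \bar s p| + |p|\,|s| \le 1 - |p|^2 + |p|\,|s|$, whence $|s|(1-|p|)\le 1-|p|^2 = (1-|p|)(1+|p|)$, so $|s|\le 1+|p|\le 2$. So the first equivalence reduces to correctly invoking the Schur--Cohn test for two roots in the closed disc; the only subtlety is the boundary case $|p|=1$, handled as above.

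Next, the distinguished boundary $b\Gamma$. Here I would use the known fact (from \cite{AY04}, or derivable from the maximum principle applied to the coordinate functions) that $b\Gamma$ consists of those $(z+w,zw)$ with $|z|=|w|=1$; equivalently both roots of $X^2-sX+p$ lie on $\T$. Two unimodular roots force $|p| = |zw| = 1$, and then $\bar p = 1/p = \bar z \bar w$, so $\bar s p = (\bar z + \bar w)zw = w + z = s$, giving $s - \bar s p = 0$; conversely $|p|=1$, $s = \bar s p$ and $|s|\le 2$ force the roots onto $\T$ by a direct computation of $|z|$ using $z + w = s$, $zw = p$. The constraint $|s|\le 2$ is exactly what guarantees the roots are unimodular rather than a reciprocal pair off the circle.

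Finally, $\partial\Gamma = \Gamma \setminus \G$. Since $\G = \{(z+w,zw):z,w\in\D\}$ is the interior, $(s,p)\in\partial\Gamma$ iff both roots lie in $\overline{\D}$ but not both in $\D$, i.e. at least one root is unimodular. If $|z|=1$ and $|w|\le 1$ then, writing $p = zw$ so $|p| = |w|\le 1$, one computes $s - \bar s p = (z+w) - (\bar z + \bar w)zw = (z + w) - (w + |w|^2 z) = z(1-|w|^2) = z(1-|p|^2)$, so $|s - \bar s p| = 1 - |p|^2$. Conversely, if $|s|\le 2$ and $|s-\bar s p| = 1-|p|^2$ with $|p|<1$, then from the $\Gamma$-criterion $(s,p)\in\Gamma$, and since the M\"obius-type quantity $(s-\bar s p)/(1-|p|^2)$ has modulus $1$ one deduces (via the same root computation run backwards) that one root lies on $\T$; if $|p|=1$ then $(s,p)\in b\Gamma\subset\partial\Gamma$ by the second part. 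This also yields the stated parametrisation $s = z+w$, $p = zw$ with $z\in\T$, $w\in\D^-$.

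The main obstacle I anticipate is getting the boundary/degenerate cases and the exact form of the Schur--Cohn inequality right --- in particular making sure the single inequality $|s-\bar s p|\le 1-|p|^2$ really does capture ``both roots in $\overline{\D}$'' including the coincident-root and $|p|=1$ situations, and bookkeeping the sign so that the $|s|\le 2$ condition is genuinely needed (it rules out the branch where the Schur--Cohn quantity is satisfied by a pair of reciprocal points straddling $\T$). Everything else is routine algebra with $z+w=s$, $zw=p$.
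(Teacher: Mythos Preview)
The paper does not prove this proposition at all: it is stated with a citation to \cite{AY04} and no argument is given. Your plan via root locations of $X^2 - sX + p$ and the Schur--Cohn test is the standard route, and is essentially what underlies the cited reference; so there is nothing to compare against in this paper itself.

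One internal inconsistency in your plan is worth flagging. Early on you assert that $|s|\le 2$ is ``automatic'' from $|s-\bar s p|\le 1-|p|^2$, via $|s|(1-|p|)\le 1-|p|^2$. That deduction is valid only when $|p|<1$; when $|p|=1$ the inequality $|s-\bar s p|\le 0$ forces $s=\bar s p$ but places no bound on $|s|$ (for instance $s=10$, $p=1$ satisfies $s-\bar s p=0$ yet $(10,1)\notin\Gamma$). You do acknowledge this at the end (``bookkeeping\ldots so that the $|s|\le 2$ condition is genuinely needed''), but the earlier paragraph should be corrected: $|s|\le 2$ is redundant when $|p|<1$ and essential when $|p|=1$. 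With that fixed, the rest of your outline --- forward direction by direct computation of $s-\bar s p$ in terms of the roots, converse by Schur--Cohn, boundary cases by forcing one or both roots onto $\T$ --- is sound.
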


For any domain $G$ and any set $X\subset \D^d$ we shall denote by $\hol(G,X)$ the set of holomorphic maps from $G$ to $X$.  A {\em $\gam$-inner function } is defined to be a function $h\in\hol(\D,\gam)$ such that
\[
\lim_{r\to 1-} h(r\la) \in \M
\]
for almost all $\la\in\t$ with respect to Lebesgue measure.  Fatou's theorem implies that the radial limit above exists for almost all $\la\in\T$.  In this paper we focus on {\em rational  $\gam$-inner functions } $h$, that is, $h$ is $\gam$-inner and $s$ and $p$ are rational.
 
 Proposition \ref{prop2.10} implies that if  $h=(s,p) \in \hol(\D,\ctwo)$ is rational  then $h$ is $\gaminn$ if and only if $p$ is inner, $|s|$ is bounded by 2 on $\d$ and $s(\tau)-\overline{s(\tau)}p(\tau) = 0$ for almost all $\tau \in \t$ with respect to Lebesgue measure. The following proposition captures the algebra of this special case.

  For any positive integer  $n$ and  polynomial $f$ of degree less than or equal to $n$, we define the polynomial ${f}^{\sim n}$ by the formula,
\be\label{eq2.60}
{f}^{\sim n}(\lambda)= \lambda^n \overline{f(1/\overline{\lambda})}. \notag
\ee
Recall that $p$ is a rational inner function on $\d$ of degree $n$ (that is, a Blaschke product with $n$ factors) if and only if there exists a polynomial $D$ of degree less than or equal to $n$ such that $D(\lambda) \ne 0$ for all $\lambda \in \d^-$ and
\be\label{eq2.70}
p(\lambda) = \frac{D^{\sim n}(\lambda)}{D(\lambda)}. \notag
\ee
As an analog of this description of rational inner functions on $\d$ we have the following description of rational $\gaminn$ functions.
\begin{proposition}\label{prop2.20}
If $h=(s,p)$ is a rational $\gaminn$ function of degree $n$ then there exist polynomials $E$ and $D$ such that
\begin{enumerate}
\item[\rm (i)] $\ \deg(E),\deg(D) \le n, $
\item[\rm(ii)]$\ E^{\sim n}=E, $
\item[\rm(iii)]$\ D(\lambda) \ne 0\ on\ \d^-$,
\item[\rm(iv)]$\ |E(\lambda)| \le 2|D(\lambda)|\ on\ \d^-$,
\item[\rm(v)]$\ s=\frac{E}{D}$ on $\d^-$, and
\item[\rm(vi)]$\ p=\frac{D^{\sim n}}{D}$ on $\d^-$. 
\end{enumerate}
Furthermore, $E_1$ and $D_1$ is a second pair of polynomials satisfying {\rm (i)-(vi)} if and only if there exists a nonzero $t \in \r$ such that $E_1=tE$ and $D_1=tD$.

\noindent Conversely, if $E$ and $D$ are polynomials satisfying {\rm (i)}, {\rm (ii)}, {\rm(iv)} above, $ D(\lambda) \neq 0 $ on
$\D$,
and $s$ and $p$ are defined by {\rm (v)} and {\rm(vi)}, then $h=(s,p)$ is a rational $\gaminn$ function of degree  less than or equal to $n$.
\end{proposition}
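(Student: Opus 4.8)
The plan is to work entirely from the criterion recorded just before the proposition: a rational $h=(s,p)$ is $\gaminn$ if and only if $p$ is inner, $|s|\le 2$ on $\d$, and $s(\tau)-\overline{s(\tau)}\,p(\tau)=0$ for almost every $\tau\in\t$. For the forward implication, since $p$ is inner of degree $n$, the description of rational inner functions recalled above supplies a polynomial $D$ with $\deg D\le n$, $D\ne 0$ on $\d^-$, and $p=D^{\sim n}/D$; this already delivers (iii), (vi), and the $D$-part of (i). The heart of the matter is to produce $E$. I set $\check s(\lambda):=\overline{s(1/\overline\lambda)}$, a rational function of $\lambda$ with $\check s(\tau)=\overline{s(\tau)}$ for $\tau\in\t$, so the boundary relation becomes $s(\tau)=\check s(\tau)\,p(\tau)$ on a full-measure subset of $\t$, hence (agreement on an infinite set) $s=\check s\,p$ identically. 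Define $E:=sD$; since $pD=D^{\sim n}$ we also have $E=\check s\,D^{\sim n}$. Now $s$ is rational and bounded by $2$ on $\d$, so it has no poles on $\d^-$ (a pole on $\t$ would make $s$ unbounded near it from inside $\d$), whence $E=sD$ has no poles on $\d^-$ and $|s|\le 2$ on $\d^-$ by continuity; on the other hand the poles of $\check s$ are the reflections $1/\overline\beta$ of the finite poles $\beta$ of $s$ (all with $|\beta|>1$), together with possibly $0$, so $\check s$ is holomorphic off $\d$ with $\check s(\infty)=\overline{s(0)}$ finite, and since $D^{\sim n}$ is a polynomial of degree $\le n$, the representation $E=\check s\,D^{\sim n}$ shows $E$ has no finite pole off $\d^-$ and at worst a pole of order $n$ at $\infty$. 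Combining the two representations, $E$ is a polynomial of degree $\le n$, and with $s=E/D$ this finishes (i) and (v).

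It remains to verify (ii) and (iv) and the uniqueness clause. For (ii): for $\tau\in\t$ one has $E^{\sim n}(\tau)=\tau^n\overline{E(\tau)}=\tau^n\overline{s(\tau)}\,\overline{D(\tau)}$, and substituting $\overline{D(\tau)}=\overline\tau^{\,n}D^{\sim n}(\tau)$ together with $\overline{s(\tau)}=s(\tau)/p(\tau)$ (valid on $\t$ since $s=\overline{s}\,p$ and $|p|=1$ there) collapses this to $s(\tau)D(\tau)=E(\tau)$; so the polynomials $E^{\sim n}$ and $E$, each of degree $\le n$, agree on $\t$ and therefore coincide. For (iv): $|E|=|s|\,|D|\le 2|D|$ on $\d^-$. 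For uniqueness, if $(E_1,D_1)$ is another pair satisfying (i)--(vi) then (vi) gives $D^{\sim n}D_1=D_1^{\sim n}D$; the zeros of $D^{\sim n}$ and $D_1^{\sim n}$ lie in $\d$ while those of $D$ and $D_1$ lie off $\d^-$, so comparing the factors with zeros off $\d^-$ forces $D_1=cD$ for a nonzero constant $c$; feeding this back into (vi) forces $\overline c=c$, so $c\in\r\setminus\{0\}$, and then (v) gives $E_1=cE$. The reverse direction, that $(tE,tD)$ satisfies (i)--(vi) for every nonzero real $t$, is immediate.

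For the converse I would run the criterion backwards. Given polynomials $E,D$ with (i), (ii), (iv) and $D\ne 0$ on $\d$, set $s=E/D$ and $p=D^{\sim n}/D$. Then $|s|\le 2$ on $\d$ by (iv); $p=D^{\sim n}/D$ is inner of degree $\le n$ (when $D$ has no zeros on $\t$ this is exactly the description of Blaschke products recalled above, and any zeros of $D$ on $\t$ cancel against equal zeros of $D^{\sim n}$ and may be discarded, lowering the degree); and for $\tau\in\t$ with $D(\tau)\ne 0$ one computes $s(\tau)-\overline{s(\tau)}\,p(\tau)=\bigl(E(\tau)-E^{\sim n}(\tau)\bigr)/D(\tau)=0$ using (ii) and $E^{\sim n}(\tau)=\tau^n\overline{E(\tau)}$. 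By the criterion (itself a consequence of Proposition \ref{prop2.10}), $h=(s,p)$ is a rational $\gaminn$ function, and its degree, being $\deg p$, is at most $n$.

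The main obstacle is the production of $E$ in the forward direction: passing from the almost-everywhere boundary identity to the rational-function identity $s=\check s\,p$, and then carrying out the two-sided pole bookkeeping --- $sD$ regular on $\d^-$, $\check s\,D^{\sim n}$ regular off $\d^-$ with growth of order at most $n$ at $\infty$ --- that forces $sD$ to be a polynomial of degree at most $n$. Once that is in hand everything else is routine: (ii) and (iv) are one-line computations on $\t$, and both uniqueness and the converse follow from the rigidity of the Blaschke representation $p=D^{\sim n}/D$.
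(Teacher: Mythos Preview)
Your argument is correct, and your route in the forward direction differs genuinely from the paper's. The paper does not construct $E$ directly; instead it quotes \cite[Corollary 6.10]{ALY12} to obtain ready-made representations $p=c\,\la^k D_p^{\sim(n-k)}/D_p$ and $s=\la^{\ell} N_s/D_p$ with a common denominator, then sets $D=\omega D_p$ and $E=\omega\la^{\ell}N_s$ for a suitable $\omega\in\T$ with $\omega^2 c=1$, and simply checks (i)--(vi). Your approach, by contrast, is self-contained: starting only from the Blaschke representation $p=D^{\sim n}/D$ recalled just before the proposition, you promote the almost-everywhere boundary relation $s=\bar s\,p$ to the rational identity $s=\check s\,p$ and then use the two expressions $E=sD=\check s\,D^{\sim n}$ to run a Liouville-style pole count (holomorphic on $\d^-$ from the first expression, holomorphic on $\C\setminus\D$ with growth $O(\la^n)$ from the second) that forces $E$ to be a polynomial of degree at most $n$. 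This buys you independence from the cited external result and makes transparent \emph{why} $s$ must share the denominator $D$ of $p$. The remaining pieces --- the verification of (ii) and (iv), the uniqueness clause via $D^{\sim n}D_1=D_1^{\sim n}D$ and separation of zeros inside and outside $\d^-$, and the converse --- match the paper's treatment in substance.
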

\begin{proof} Let $h=(s,p)$ be a rational $\gaminn$ function of degree $n$. By  \cite[Corollary 6.10]{ALY12},
 $s$ and $p$ can be written as ratios of polynomials with the same denominators. More precisely, let
\begin{equation} \label{formp}
p(\lambda) = c \frac{\lambda^k D_p^{\sim (n-k)}(\lambda)}{D_p(\lambda)}
\end{equation}
where $|c|=1$, $0\le k \le n$ and $D_p$ is a polynomial of degree $n-k$ such that $D_p(0)=1$.  Then $s$ is expressible in the form 
\begin{equation} \label{forms}
s(\lambda) = \frac{\lambda^{\ell} N_s(\lambda)}{D_p(\lambda)} 
\end{equation}
where $0 \le \ell \le \frac{1}{2}n= \frac{1}{2} \deg(p)$,  and $N_s$ is a polynomial of degree $ \deg(p) - 2 \ell$ such that $N_s(0) \neq 0$. Moreover, for all $\la \in \C$, 
\begin{equation} \label{form-s-p}
\lambda^{\ell} N_s(\lambda)=
 c \lambda^{n-\ell}  \overline{N_s(1/\bar{\lambda})}.
\end{equation}

Note that $p$ is inner, and so one can choose $D_p$ such that
$ D_p(\lambda) \neq 0$ on $ \d^-$.
Take $D = \omega D_p$, where $\omega \in \T$ such that $\omega^2 c=1$, and $E(\lambda) = \omega \lambda^\ell N_s(\lambda)$.
It is easy to see that $\deg(E) \le n$ and $\deg(D) \le n$.
One can check that $E^{\sim n}=E, $
$ p=\frac{D^{\sim n}}{D}$, $s =\frac{E}{D}$. Since $|s| \le 2$ on $\d^-$ we have $|E(\lambda)| \le 2|D(\lambda)|$ on $\d^-$.

If there exists a nonzero $t \in \r$ such that $E_1=tE$ and $D_1=tD$ then it is clear that the polynomials $E_1$ and $D_1$ satisfy {\rm (i)-(vi)}. 
Conversely, let $E_1$ and $D_1$ be a second pair of polynomials satisfying {\rm (i)-(vi)}. Therefore
\begin{equation}\label{E=E1}
s=\frac{E}{D} = \frac{E_1}{D_1}\; \text {on } \; \d^-.
\end{equation} 
and
\begin{equation}\label{D=D1}
p=\frac{D^{\sim n}}{D}=\frac{D_1^{\sim n}}{D_1}\; \text {on } \; \d^-.
\end{equation} 
Let 
$$D(\lambda) = a_0 +a_1 \lambda + \dots + a_k \lambda^k$$
for some $k \le n$ and $a_0\neq 0$.
Then 
\begin{align*}
D^{\sim n}(\lambda) & =\lambda^n \overline{D(1/\overline{\lambda})}\\
 &= \overline{a_0}\lambda^n +\overline{a_1} \lambda^{n-1} + \dots + \overline{a_k} \lambda^{n-k}.
\end{align*}
Therefore, for all $\lambda \in \d^-$,
\begin{align*}
p(\lambda)& =\frac{D^{\sim n}(\lambda)}{D(\lambda)}\\
& =\frac{\lambda^{n-k}(\overline{a_0}\lambda^k +\overline{a_1} \lambda^{k-1} + \dots + \overline{a_k})}{a_0 +a_1 \lambda + \dots + a_k \lambda^k}.
\end{align*}
Thus $p$ has $k$ poles in $\C$, counting multiplicity, has a zero of multiplicity $n-k$ at $0$, and has degree $n$. Hence
$n, k$ and the poles of $p$ in $\{z \in \C: |z| >1 \}$ are determined by $p$. Therefore,
$D$ and $D_1$ have the same degree $k$ and the same finite number of zeros in $\{z \in \C: |z| >1 \}$, counting multiplicity.
Thus there exists $t \in \C$ such that $t \neq 0$ and $D_1= t D$.
By the equality \eqref{D=D1},
$$\frac{D^{\sim n}}{D}=\frac{D_1^{\sim n}}{D_1}=\frac{\bar{t}D^{\sim n}}{tD}.$$
Thus $t=\bar{t}$. By the  equality \eqref{E=E1}, $E_1=tE$.
Therefore there exists a nonzero $t \in \r$ such that $E_1=tE$ and $D_1=tD$.\\

Let us prove the converse result. Let
 $E$ and $D$ be polynomials satisfying {\rm (i)}, {\rm (ii)}, {\rm(iv)} above, $\ D(\lambda) \neq 0$ on $\D$, and let $s$ and $p$ be defined by {\rm (v)} and {\rm(vi)}.

Suppose $D$ has no zero on $\T$. Then $D$ and $D^{\sim n}$ have no common factor. Hence $p= \frac{D^{\sim n}}{D}$ is inner and
$$\deg(p)= \deg\left(\frac{D^{\sim n}}{D}\right) = \max\{\deg(D),\deg(D^{\sim n})\}=n.$$

If $D$ has  zeros $\sigma_1,  \dots, \sigma_\ell$ on $\T$
then $D$ and $D^{\sim n}$ have the common factor 
\newline $\prod_{i=1}^\ell(\lambda - \sigma_i)$. Thus $p= \frac{D^{\sim n}}{D}$ is inner and
$$\deg(p)= \deg\left(\frac{D^{\sim n}}{D}\right) \le n-\ell.$$
By {\rm(iv)}, $|s|$ is bounded by 2 on $\d^-$.
Let us show that $\overline{s}p= s$ almost everywhere on $\T$. That is,
$$
\overline{s}p  =\frac{\overline{E}}{\overline{D}}\frac{D^{\sim n}}{D}= \frac{{E}}{{D}}=s\;\text{almost everywhere on} \; \T.$$
For all $\lambda \in \t \setminus \{\sigma_1,  \dots, 
\sigma_\ell \}$, we have
\begin{align*}
 E(\lambda) = E^{\sim n}(\lambda) &\iff E(\lambda) = \lambda^n \overline{E(\lambda)}\\
&\iff  \overline{E(\lambda)}\lambda^n \overline{D(\lambda)}=\overline{D(\lambda)}E(\lambda)\\
&\iff  \overline{E(\lambda)} D^{\sim n}(\lambda) = \overline{D(\lambda)}E(\lambda)\\
&\iff 
\frac{\overline{E}(\lambda)}{\overline{D}(\lambda)}\frac{D^{\sim n}(\lambda)}{D(\lambda)}= \frac{{E}(\lambda)}{{D}(\lambda)}.
\end{align*}
Thus $s(\lambda)=\overline{s(\lambda)}p(\lambda)$ for all but finitely many $\la\in\T$.
By Proposition \ref{prop2.10}, $h=(s,p)$ is a rational $\gaminn$ function of $\deg(h) \le n-\ell$.
\end{proof}
\begin{remark} \label{dimE} {\rm 
For fixed $D$ of degree $n$, the set of polynomials $E$ satisfying (i) and (ii) of the previous proposition is a real vector space of dimension $n+1$.
}
\end{remark}


\section{Royal nodes and the royal polynomial} \label{sec3}

The royal variety 
\[
\royal= \{(2\la,\la^2):\la\in\C\}=\{(s,p)\in\C^2 : s^2=4p\} 
\]
plays a prominent role in the geometry and function theory of $\G$.  Its intersection with $\G$ is a complex geodesic of $\G$,
it is invariant under all automorphisms of $\G$, and it is the {\em only} complex geodesic of $\G$ with this invariance property
\cite[Lemma 4.3]{AY08}.   In this section we describe rational $\Ga$-inner functions in terms of their intersections with $\royal$.


 Let us clarify the notion of the degree of a rational
$\Gamma$-inner function $h$. 
\begin{definition}
 The degree $\deg(h)$ of a rational $\Ga$-inner function $h$ is defined to be  $ h_*(1)$, where 
$h_* : \mathbb{Z}= \pi_1(\T) \to \pi_1(\M)$ is the homomorphism of fundamental groups induced by $h$ when it is regarded as a continuous map from $\T$ to $\M$.
\end{definition}
This definition only defines $\deg(h)$ as an integer up to multiplication by $\pm 1$, as is shown by the following lemma.  We shall adopt the convention that $\deg(h)$ is a non-negative integer.
\begin{lemma} $\M$ is homotopic to the circle $\T$ and $\pi_1(\M) = \Z$.
\end{lemma}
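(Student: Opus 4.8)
The plan is to exploit the explicit description of $\M=b\Gamma$ furnished by Proposition \ref{prop2.10}: a point $(s,p)\in\C^2$ lies in $b\Gamma$ if and only if $|s|\le 2$, $|p|=1$ and $s-\bar s\,p=0$. The key observation is that this system is invariant under shrinking the first coordinate toward $0$: if $(s,p)\in b\Gamma$ and $t\in[0,1]$, then $|(1-t)s|\le|s|\le 2$, $|p|=1$ is unchanged, and $(1-t)s-\overline{(1-t)s}\,p=(1-t)(s-\bar s\,p)=0$, so $((1-t)s,p)\in b\Gamma$ as well. In other words, $b\Gamma$ is invariant under the radial contractions of the $s$-variable onto the fibre $\{s=0\}$.

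First I would define $H\colon[0,1]\times b\Gamma\to b\Gamma$ by $H(t,(s,p))=((1-t)s,p)$, and record that $H$ is continuous (it is polynomial in its arguments), that $H(0,\cdot)$ is the identity on $b\Gamma$, and that $H(1,\cdot)$ maps $b\Gamma$ onto the set $C\df\{(0,\mu):\mu\in\T\}$ (which is contained in $b\Gamma$, since $(0,\mu)$ trivially satisfies the three conditions) while fixing $C$ pointwise. Thus $H$ is a strong deformation retraction of $b\Gamma$ onto $C$, and $(0,\mu)\mapsto\mu$ is a homeomorphism $C\to\T$. Hence $\M=b\Gamma$ is homotopy equivalent to $\T$, and by homotopy invariance of the fundamental group $\pi_1(\M)\cong\pi_1(\T)=\Z$.

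It is worth noting, as context for the remark in the introduction, that the same circle of ideas yields the homeomorphism of $b\Gamma$ with the closed M\"obius band: writing $p=e^{2i\theta}$, the equation $s=\bar s\,p$ forces $se^{-i\theta}\in\R$, so one gets a continuous surjection $[-2,2]\times\R\to b\Gamma$, $(r,\theta)\mapsto(re^{i\theta},e^{2i\theta})$, whose only identifications are $2\pi$-periodicity in $\theta$ together with $(r,\theta)\sim(-r,\theta+\pi)$; this exhibits $b\Gamma$ as $[-2,2]\times[0,\pi]$ with $(r,0)$ glued to $(-r,\pi)$, whose core circle is exactly $C$. None of this is needed for the lemma, which follows from $H$ alone. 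There is no genuine obstacle in the argument; the only step requiring (minor) care is checking that $H$ does not leave $b\Gamma$, which is precisely where the inequalities and the identity of Proposition \ref{prop2.10} are used, and one must of course invoke that proposition to know that the distinguished boundary defined via maximum modulus coincides with the concrete set described there.
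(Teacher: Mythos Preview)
Your proof is correct and follows essentially the same approach as the paper: both use the homotopy $(s,p)\mapsto(ts,p)$ (you parametrize it as $((1-t)s,p)$) to exhibit $b\Gamma$ as homotopy equivalent to the circle $\{(0,p):p\in\T\}$, with the paper phrasing this via a pair of maps $f,g$ and you phrasing it as a strong deformation retraction. The extra M\"obius-band remark is pleasant context but, as you note, not needed for the lemma.
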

\begin{proof} This is a consequence of the fact that if $(s,p) \in \M$ and $0\le t \le 1$ then also $(ts,p) \in \M$. Thus the maps
\[
f: \M \to \T, \;\; g: \T \to \M
\]
given by 
\[
f(s,p)=p, \;\; g(z)=(0,z)
\]
satisfy $(g \circ f)(s,p) =(0,p)$ and $ f \circ g ={\rm id}_{\T}$.
Thus $g \circ f \sim {\rm id}_{\M}$ since
\[
(t,s,p) \mapsto (ts,p)
\]
defines a homotopy between $g \circ f$ and ${\rm id}_{\M}$.
It follows that $\pi_1(\M) = \pi_1(\T)= \Z$.
\end{proof}

Thus $\pi_1(\M) = \Z$, and so $\deg(h)$ is an integer, which we can take to be nonnegative. 

\begin{proposition}\label{degh}
For any rational $\Gamma$-inner function $h= (s,p)$, $\deg (h)$ is the degree $\deg (p)$ (in the usual sense) of the finite Blaschke product $p$. 
\end{proposition}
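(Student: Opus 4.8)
The plan is to transport the computation of $\deg(h)$, which a priori lives in the abstract group $\pi_1(\M)$, along the homotopy equivalence $f\colon\M\to\T$, $f(s,p)=p$, exhibited in the previous lemma, and thereby turn it into a winding-number computation for the finite Blaschke product $p$ on $\T$.

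First I would check that $h$ genuinely defines a continuous map $\T\to\M$, so that $h_*$ is legitimate. Since $p$ is a rational inner function it is a finite Blaschke product, hence holomorphic on a neighbourhood of $\d^-$; and if the rational function $s$ had a pole at some point of $\T$ then $|s|$ would be unbounded on $\d$, contradicting $|s|\le 2$ there. Thus $h=(s,p)$ extends continuously to $\d^-$ and, being $\Ga$-inner, carries $\T$ into $\M$, so $h_*\colon\pi_1(\T)\to\pi_1(\M)$ is defined.

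Next, because $f$ is a homotopy equivalence the induced homomorphism $f_*\colon\pi_1(\M)\to\pi_1(\T)=\Z$ is an isomorphism, and it realises precisely the identification of $\pi_1(\M)$ with $\Z$ (up to sign) that underlies the definition of $\deg(h)$. All the fundamental groups in sight being abelian, the usual change-of-base-point identifications are canonical, and by functoriality
\[
f_*\big(h_*(1)\big)=(f\circ h)_*(1)=p_*(1),
\]
where $f\circ h\colon\T\to\T$ is the map $\la\mapsto p(\la)$. Hence, up to the sign ambiguity already present in the definition, $\deg(h)$ equals the winding number of the loop $t\mapsto p(\e^{\ii t})$. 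Finally I would identify this winding number with $\deg(p)$ by the argument principle: writing $m=\deg(p)$, the Blaschke product $p$ is holomorphic on a neighbourhood of $\d^-$, has no zeros or poles on $\T$, and has exactly $m$ zeros in $\d$ counted with multiplicity, so the increment of $\arg p(\e^{\ii t})$ over $0\le t\le 2\pi$ is $2\pi m$. Therefore $\deg(h)=\pm m$, and with the standing convention that $\deg(h)\ge 0$ we conclude $\deg(h)=\deg(p)$.

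I do not anticipate a serious obstacle: the argument is essentially a functoriality statement built on the lemma just proved, combined with the classical fact that a degree-$m$ finite Blaschke product restricts to a degree-$m$ continuous self-map of $\T$. The only points needing a word of care are the continuity of $h$ up to the boundary circle (so that $h_*$ makes sense) and the bookkeeping of the $\pm1$ ambiguity, which the non-negativity convention absorbs.
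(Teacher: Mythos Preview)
Your argument is correct and rests on the same idea as the paper's: both use the homotopy equivalence $f\colon\M\to\T$, $f(s,p)=p$, established in the preceding lemma to reduce $\deg(h)$ to the winding number of $p|_\T$. The paper does this by writing down the explicit homotopy $h_t=(ts,p)$ from $h$ to $(0,p)=g\circ p$ and then reading off $(0,p)_*(1)=\deg p$, whereas you post-compose with $f$ and invoke functoriality to obtain $(f\circ h)_*=p_*$; these are dual ways of applying the same retraction. Your version has the small added merit of justifying continuity of $h$ on $\T$ and spelling out the argument-principle step that identifies $p_*(1)$ with $\deg p$, points the paper leaves implicit.
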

\begin{proof}
Indeed, if 
\[
h_t(\lambda) = (t s(\lambda), p(\lambda))\;\; \text{for} \;\; t \in [0,1],
\]
then since $(ts(\lambda),p(\lambda)) \in \M$ for $\lambda  \in \T$, $h_t$ is a homotopy between $h=h_1$ and $h_0= (0,p(\lambda))$.
It follows that the homomorphism $h_* :\pi_1(\T)= \mathbb{Z} \to \pi_1(\M)=\Z$ coincides with $(h_0)_*$, and it is easy to see that $(0,p)_*(1)$ is the degree of $p$.
\end{proof}

Clearly the degree is a homotopy invariant in the class of rational $\Ga$-inner functions with the topology of uniform convergence on $\d^-$.  Indeed, the degree provides a complete homotopy invariant, in view of the fact that a Blaschke product of degree $n\geq 0$ is homotopic in the class of all finite Blaschke products to the Blaschke product $\la \mapsto  \la^n$.

An interesting and surprising fact is that if $h$ has degree $n$, then there are always exactly $n$ royal nodes of $h$ (Theorem \ref{prop3.10} below).
This fact, however, requires that the royal nodes be counted with the proper multiplicity. Consider a rational $\gaminn$ function $h=(s,p)$ of degree $n$, let $E$ and $D$ be as in Proposition \ref{prop2.20}, and let $R$ be the polynomial defined by
\be\label{eq3.25}
R(\lambda) = 4D(\lambda)D^{\sim n}(\lambda)-E(\lambda)^2.
\ee
We see using (iv) and (v) that
\be\label{eq3.30}
R(\lambda) = D(\lambda)^2(4p(\lambda) - s(\lambda)^2) \notag
\ee
for all $\lambda \in \d^-$. Hence, Condition (iii) in Proposition \ref{prop2.20} implies that the royal nodes of $h$ exactly correspond to the zeros of $R$ in $\d^-$. For this reason we refer to $R$ as the \emph{royal polynomial of $h$}. As Proposition \ref{prop2.20} states that $E$ and $D$ are only determined by $s$ and $p$ up to multiplication by a nonzero real scalar, ``the" royal polynomial is only determined by $s$ and $p$ up to multiplication by a positive scalar. We shall tolerate this slight abuse of language.

A special class of rational $\Ga$-inner functions consists of functions $h$ such that $h(\d)\subset \mathcal R$. These are precisely the rational $\Ga$-inner functions whose royal polynomials are identically zero. Such functions have the form $h=(2f,f^2)$ for some finite Blaschke product $f$, and so their theory reduces to that of finite Blaschke products.  Our concern is primarily with functions that are not of the form $(2f,f^2)$ (and so have nonzero royal polynomials).  For completeness, we shall define the degree of the zero polynomial to be $-\infty$.

\begin{defin}\label{def3.20}
 We say that a polynomial  $f$ is $n$-symmetric if $\deg(f) \le n$ and $f^{\sim n} = f$. For any set $E \subset {\mathbb C}$, ${\rm ord}_E (f)$ will denote the number of zeros of $f$ in $E$, counted with multiplicity, and ${\rm ord}_0 (f)$  will mean the same as ${\rm ord}_{\{0 \}} (f)$.

\end{defin}
Two simple facts are that, for any  $n$-symmetric nonzero polynomial $f$,
\be\label{eq3.32}
\deg(f)=n-{\rm ord}_{0} (f)
\ee
and
\be\label{eq3.34}
{\rm ord}_{0} (f) + 2{\rm ord}_{\D  \setminus\{0 \} } (f) + {\rm ord}_{\T} (f) = \deg(f).
\ee

\begin{proposition}\label{prop3.5}
Let $h$ be a rational $\gaminn$ function of degree $n$ and let $R$ be the royal polynomial of $h$ as defined by equation \eqref{eq3.25}. Then $R$ is $2n$-symmetric and the zeros of $R$ that lie on $\t$ have either even
or infinite order.
\end{proposition}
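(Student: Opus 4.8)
The plan is to treat the two claims separately. That $R$ is $2n$-symmetric is a purely formal manipulation of the $\sim$-operation; that the zeros of $R$ on $\T$ have even order will come from recognising $R$, restricted to the circle, as $\lambda^{n}$ times a non-negative trigonometric polynomial, together with the elementary fact that a non-negative real-analytic function on an interval vanishes only to even order.

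For the first claim I would begin by recording two basic properties of the operation $f\mapsto f^{\sim m}$: namely $(fg)^{\sim(m+n)}=f^{\sim m}g^{\sim n}$ whenever $\deg f\le m$ and $\deg g\le n$, and $(f^{\sim m})^{\sim m}=f$. Both are immediate from the definition $f^{\sim m}(\lambda)=\lambda^{m}\overline{f(1/\bar\lambda)}$. Then $\deg R\le 2n$ is clear from $\deg D,\deg D^{\sim n},\deg E\le n$, and applying the two identities to $R=4DD^{\sim n}-E^{2}$ gives $(DD^{\sim n})^{\sim 2n}=D^{\sim n}(D^{\sim n})^{\sim n}=D^{\sim n}D$ and, using condition (ii) of Proposition \ref{prop2.20}, $(E^{2})^{\sim 2n}=(E^{\sim n})^{2}=E^{2}$. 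Hence $R^{\sim 2n}=4DD^{\sim n}-E^{2}=R$, so $R$ is $2n$-symmetric.

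For the second claim I may assume $R\not\equiv 0$ (otherwise $h(\d^{-})=\royal\cap\Ga$ and there is nothing to prove). The key observation is that for $\lambda\in\T$, where $1/\bar\lambda=\lambda$, one has $D^{\sim n}(\lambda)=\lambda^{n}\overline{D(\lambda)}$, and condition (ii) gives $E(\lambda)=\lambda^{n}\overline{E(\lambda)}$, hence $E(\lambda)^{2}=\lambda^{n}|E(\lambda)|^{2}$; substituting into \eqref{eq3.25},
\[
R(\lambda)=\lambda^{n}\bigl(4|D(\lambda)|^{2}-|E(\lambda)|^{2}\bigr),\qquad \lambda\in\T .
\]
By condition (iv) of Proposition \ref{prop2.20} the factor $q(\lambda):=4|D(\lambda)|^{2}-|E(\lambda)|^{2}$ is real and $\ge 0$ on $\T$, and on $\T$ it coincides with the Laurent polynomial $\lambda^{-n}R(\lambda)$. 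Since $\lambda^{-n}$ is analytic and non-vanishing, $R$ and $q$ vanish to the same order at any $\tau\in\T$; writing $\lambda=\tau\e^{\ii\theta}$ and using $\tau\e^{\ii\theta}-\tau=\ii\tau\theta+O(\theta^{2})$ shows that this order equals the order of vanishing in $\theta$ of $q(\tau\e^{\ii\theta})$. As $q\ge 0$ near $\tau$ and an odd power of $\theta$ changes sign, that order must be even, which is the assertion.

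I do not anticipate a genuine obstacle. The only mildly delicate point is the bookkeeping that identifies the multiplicity of $R$ as a polynomial root at the boundary point $\tau$ with the order of vanishing along the circle of the non-negative function $q$, and this is exactly the expansion $\tau\e^{\ii\theta}-\tau=\ii\tau\theta+O(\theta^{2})$ made explicit above; everything else is routine.
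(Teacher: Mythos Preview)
Your proof is correct and follows essentially the same route as the paper's own argument: both establish $2n$-symmetry via the multiplicativity and involutivity of the $\sim$-operation applied to $DD^{\sim n}$ and $E^{2}$, and both prove the even-order claim by computing $\lambda^{-n}R(\lambda)=4|D(\lambda)|^{2}-|E(\lambda)|^{2}\ge 0$ on $\T$ using condition (iv). The paper is terser, simply asserting that the zeros on $\T$ have even order once non-negativity is noted, whereas you spell out the parametrisation $\lambda=\tau\e^{\ii\theta}$ and the sign argument explicitly; this extra detail is sound and the approaches coincide.
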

\begin{proof}
Clearly $D D^{\sim n}$ is 2$n$-symmetric. Condition (ii) in Proposition \ref{prop2.20} implies that $E^2$ is $2n$-symmetric. Hence, $R$ is $2n$-symmetric.

In the case $h(\d^-) \subset \royal\cap \Ga$, the royal polynomial  
$R $ is identically zero. Hence the zeros of $R$ on $\t$ have infinite order.

In other cases, note that if $\lambda \in \t$, then $D^{\sim n}(\lambda) = \lambda^n \overline{D(\lambda)}$ and $E(\lambda) = E^{\sim n}(\lambda) = \lambda^n \overline{E(\lambda)}$. Hence, if $\lambda \in \t$,
\be\label{eq3.36}
\lambda^{-n}R(\lambda) = \lambda^{-n}(4D(\lambda)D^{\sim n}(\lambda)-E(\lambda)^2)
=4|D(\lambda)|^2-|E(\lambda)|^2.
\ee
As Condition (iv) in Proposition \ref{prop2.20} guarantees that $4|D|^2-|E|^2 \ge 0$ on $\t$, it follows that the zeros of $R$ that lie in $\t$ have even order.
\end{proof}
Proposition \ref{prop3.5} shows that the rule for counting royal nodes introduced in the following definition always produces an integer.

\begin{definition}\label{def3.30}
Let $h$ be a rational $\gaminn$ function 
such that  $h(\d^-) \not\subseteq \royal\cap \Ga$ 
and let  $R$ be the royal polynomial of $h$.  If 
 $\sigma$ is a zero of $R$ of order $\ell$, we define the \emph{multiplicity $\#\sigma$} of $\sigma$ (as a royal node of $h$) by 
\[
 \#\sigma \quad = \quad \twopartdef{ \ell} {\mbox{ if }\sigma \in \d}{\half \ell} {\mbox{ if }\sigma \in \t.}
\]
We define the {\em type} of $h$ to be the ordered pair $(n,k)$ where $n$ is the sum of the multiplicities of the royal nodes of $h$ that lie in $\d^-$ and  $k$ is the sum of the multiplicities of the royal nodes of $h$ that lie in $\t$. We define $\rnk$ to be the collection of rational $\gaminn$ functions $h$ of type $(n,k)$.

\end{definition}

 $\rnk$ makes sense for integers $n,k$ such that $0\le k \le n$.
The following example of rational $\gaminn$ functions from $
\mathcal{R}^{n,k}$ for even $n \ge 2$ can be found in \cite[Proposition 12.1]{ALY12}.

\begin{example}{\rm
 For all $\nu \ge 0$ and $0<r<1$,  the function 
\begin{equation}\label{h_nu}
h_{\nu}(\la) =\left( 2(1-r) \frac{\la^{\nu+1}}{1+ r\la^{2\nu+1}},
\frac{\la(\la^{2\nu+1}+r)}{1+ r\la^{2\nu+1}} \right), \;\la \in \D,
\end{equation}
belongs to $\mathcal{R}^{2 \nu +2,2 \nu +1}$.
The royal nodes of $h_\nu$ that lie in $\t$, being the points at which $|s|=2$, are the $(2\nu +1)$th roots of $-1$, that is, 
\[
\omega_j = e^{\ii \pi (2j+1)/(2\nu+1)}, \; j= 0, \dots, 2 \nu.
\]
They are all of multiplicity $1$.
There is a simple royal node at $0$.
}
\end{example}

The following proposition clarifies the statement in the introduction that, with correct counting, a rational $\gaminn$ function $h$ has exactly $\deg(h)$ royal nodes. 
\begin{theorem}\label{prop3.10}
If $h \in \rnk$ 
is nonconstant then $n=\deg(h)$.
\end{theorem}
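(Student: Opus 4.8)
The plan is to reduce the claim to the two elementary identities \eqref{eq3.32} and \eqref{eq3.34} applied to the royal polynomial $R$ of $h$. Write $m=\deg(h)$; by Proposition~\ref{degh} this equals $\deg(p)$, and by Proposition~\ref{prop3.5} the royal polynomial $R$ defined by \eqref{eq3.25} is $2m$-symmetric. Since $h\in\rnk$ the type of $h$ is defined, so $R\not\equiv 0$ --- the alternative $R\equiv 0$ corresponds to the case $h(\D^-)=\royal\cap\Ga$ set aside in Theorem~\ref{prop1.20} --- and therefore \eqref{eq3.32} and \eqref{eq3.34}, being valid for any nonzero $n$-symmetric polynomial, apply to $R$ with $2m$ playing the role of $n$. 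Explicitly, $\deg(R)=2m-\ord_0(R)$ and $\ord_0(R)+2\ord_{\D\setminus\{0\}}(R)+\ord_{\T}(R)=\deg(R)$.

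I would then express the first coordinate $n$ of the type of $h$ in terms of the orders of vanishing of $R$. By Definition~\ref{def3.30} the multiplicity of a royal node $\sigma$ of $h$ is $\ord_\sigma(R)$ when $\sigma\in\D$ and $\tfrac12\ord_\sigma(R)$ when $\sigma\in\T$, and the royal nodes of $h$ are precisely the zeros of $R$ in $\D^-$ (because $D$ is zero-free on $\D^-$ by Condition~(iii) of Proposition~\ref{prop2.20} and $R=D^2(4p-s^2)$ there). Summing the multiplicities over the royal nodes lying in $\D^-=\D\cup\T$ gives
\[
n=\ord_0(R)+\ord_{\D\setminus\{0\}}(R)+\tfrac12\ord_{\T}(R).
\]

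The remainder is arithmetic. From the identity $\ord_0(R)+2\ord_{\D\setminus\{0\}}(R)+\ord_{\T}(R)=\deg(R)$ we get $\ord_{\D\setminus\{0\}}(R)+\tfrac12\ord_{\T}(R)=\tfrac12\big(\deg(R)-\ord_0(R)\big)$, whence $n=\tfrac12\big(\ord_0(R)+\deg(R)\big)$; and $\deg(R)=2m-\ord_0(R)$ gives $\ord_0(R)+\deg(R)=2m$. Therefore $n=m=\deg(h)$.

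The main point is that there is no real obstacle: the substance has been front-loaded into Propositions~\ref{prop2.20}, \ref{degh} and \ref{prop3.5} together with the bookkeeping identities \eqref{eq3.32}--\eqref{eq3.34}. The two things that must be handled with care are, first, excluding the degenerate case $R\equiv 0$ (equivalently $h(\D^-)=\royal\cap\Ga$), which is needed for the identities to apply and for the type to be defined; and second, invoking Proposition~\ref{prop3.5} to know that $\ord_{\T}(R)$ is even, so that the counting convention of Definition~\ref{def3.30} is consistent and the displayed formula for $n$ is an integer. Once these are in place, $n=\deg(h)$ falls straight out of \eqref{eq3.32} and \eqref{eq3.34}.
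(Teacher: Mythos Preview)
Your proof is correct and follows essentially the same route as the paper: use Proposition~\ref{prop3.5} to see that $R$ is $2\deg(h)$-symmetric, apply the bookkeeping identities \eqref{eq3.32} and \eqref{eq3.34}, and read off $n=\ord_0(R)+\ord_{\D\setminus\{0\}}(R)+\tfrac12\ord_\T(R)=\deg(h)$. Your explicit exclusion of the degenerate case $R\equiv 0$ is a welcome point of care that the paper leaves implicit.
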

\begin{proof}
Let $R$ be the royal polynomial of $h$.
By assumption $h \in \rnk$, hence $n \ge 1$ and $h(\d^-) \not\subseteq \royal\cap \Ga$. Thus $R$ is not identically zero.
 As Proposition \ref{prop3.5} implies that $R$ is $2\deg(h)$-symmetric, it follows from equations \eqref{eq3.32} and \eqref{eq3.34} that
\be\label{eq3.40}
\deg(R)= 2\deg(h)-{\rm ord}_{0} (R)\notag
\ee
and
\be\label{eq3.50}
{\rm ord}_{0} (R) + 2{\rm ord}_{\D  \setminus\{0 \} } (R) + {\rm ord}_{\T} (R) = \deg(R),\notag
\ee
which imply that
\be\label{eq3.60}
2{\rm ord}_{0} (R) + 2{\rm ord}_{\D  \setminus\{0 \} } (R) + {\rm ord}_{\T} (R) = 2\deg(h).
\ee
Hence
$$
n ={\rm ord}_{0} (R) + {\rm ord}_{\D  \setminus\{0 \} } (R) + \half{\rm ord}_{\T} (R) = \deg(h).
$$
\end{proof}

Note that the number of royal nodes of $h$ is equal to 
$$
{\rm ord}_{0} (R) + {\rm ord}_{\D  \setminus\{0 \} } (R) + \half{\rm ord}_{\T} (R).
$$
Therefore, as a corollary of Theorem \ref{prop3.10} we obtain   Theorem \ref{prop1.20}, that is, for a  nonconstant  rational
 $\gaminn$ function  $h$,  either $h(\d^-) = \royal\cap \Ga$ or $h(\d^-)$ meets $\royal$ exactly $\deg(h)$ times.

\begin{example}\label{boring} {\rm Let $\phi$ and $\psi$ be inner functions on $\D$. Then
\begin{equation}\label{phipsi}
h = (\phi + \psi, \phi \psi)
\end{equation}
is $\Gamma$-inner. In particular, $h = (2 \phi, \phi^2)$ is 
$\Gamma$-inner; this example has the property that $h(\D)$ lies in the royal variety $\royal$.
}
\end{example}

\section{Prescribing the royal nodes of $h$ and zeros of $s$} \label{royalnodes}

In this section we shall show how to construct a rational $\gaminn$ function $h=(s,p)$ with the royal nodes of $h$ and the zeros of $s$ prescribed. Recall from the previous section that if $R$ denotes the royal polynomial of $h$ then the royal nodes are the zeros of $R$ that lie in $\d^-$. 
In addition, if $\deg(h)=n$ then $R$ is $2n$-symmetric, has zeros of even order on $\t$ and, by equation \eqref{eq3.36}, satisfies $\lambda^{-n}R(\lambda) \ge 0$ for all $\lambda \in \t$. We formalize these properties in the following definition.
\begin{definition}\label{def4.10}
A nonzero polynomial $R$ is {\em $n$-balanced} if $\deg(R) \le 2n$, $R$ is $2n$-symmetric and $\lambda^{-n}R(\lambda) \ge 0$ for all $\lambda \in \t$.
\end{definition}

We have shown the following.
\begin{lemma}\label{royal_polyn} Let $h$ be a rational $\Ga$-inner function $h$ of degree $n$. Then the royal polynomial $R$ of $h$ is either $n$-balanced or identically zero.
\end{lemma}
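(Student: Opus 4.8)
The plan is to read the three conditions defining an $n$-balanced polynomial directly off results already established; indeed the substance of the lemma has effectively been proved en route to Theorem \ref{prop3.10}. Write $n=\deg(h)$, and let $E,D$ and $R$ be as in Proposition \ref{prop2.20} and equation \eqref{eq3.25}. First, Proposition \ref{prop3.5} asserts that $R$ is $2n$-symmetric; by Definition \ref{def3.20} this already incorporates the degree bound $\deg(R)\le 2n$. So two of the three requirements in Definition \ref{def4.10} are immediate.

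For the boundary positivity I would simply quote equation \eqref{eq3.36}, which records that for $\lambda\in\t$,
\[
\lambda^{-n}R(\lambda)=4|D(\lambda)|^2-|E(\lambda)|^2.
\]
By condition (iv) of Proposition \ref{prop2.20} we have $|E|\le 2|D|$ on $\d^-$, hence in particular on $\t$, so the right-hand side is nonnegative there; thus $\lambda^{-n}R(\lambda)\ge 0$ for all $\lambda\in\t$, which is the last clause of Definition \ref{def4.10}.

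The one point deserving a word of care is that Definition \ref{def4.10} demands $R$ be nonzero. Using $s=E/D$, $p=D^{\sim n}/D$ and $D\ne 0$ on $\d^-$ from Proposition \ref{prop2.20}, one has $R=D^2(4p-s^2)$ on $\d^-$, so $R\equiv 0$ would force $4p=s^2$, i.e. $h(\d^-)\subseteq\royal$ --- the degenerate case $h=(2\phi,\phi^2)$ of Example \ref{boring}, which is exactly the alternative isolated in Theorem \ref{prop1.20} and is tacitly excluded here. In every other case $R$ is a genuine nonzero polynomial, and the verifications above show it is $n$-balanced.

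I do not anticipate any real obstacle: the lemma is a bookkeeping corollary, which is why the text introduces it with ``We have shown the following.'' The only care needed is to match each clause of Definition \ref{def4.10} to the correct earlier statement --- Proposition \ref{prop3.5} for $2n$-symmetry and the degree bound, equation \eqref{eq3.36} together with Proposition \ref{prop2.20}(iv) for the boundary positivity --- and to dispose of the possibility $R\equiv 0$.
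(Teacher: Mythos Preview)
Your proposal is correct and takes essentially the same approach as the paper: the paper's own ``proof'' is merely the sentence ``We have shown the following,'' and you have correctly identified the ingredients it is gesturing at --- Proposition~\ref{prop3.5} for $2n$-symmetry (which, via Definition~\ref{def3.20}, carries the degree bound), and equation~\eqref{eq3.36} together with Proposition~\ref{prop2.20}(iv) for the positivity of $\lambda^{-n}R(\lambda)$ on $\t$. Your explicit remark on the possibility $R\equiv 0$ is a useful clarification that the paper leaves implicit.
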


To construct a rational $\gaminn$ function $h=(s,p)$ with the royal nodes of $h$ and the zeros of $s$ prescribed we require the following well-known result of Fej\'er and Riesz (\cite[Chapter 6]{roro85}, \cite[Section 53]{RN}).

\begin{lemma}\label{lem4.30}
If $f(\lambda)=\sum_{i=-n}^n a_i \lambda^i$ is a trigonometric polynomial of degree $n$ such that $f(\lambda) \ge 0$ for all $\lambda \in \t$ then there exists an analytic polynomial  $D(\lambda)=\sum_{i=0}^n b_i \lambda^i$ of degree $n$  such that $D$ is outer (that is, $D(\lambda) \ne 0$ for all $\lambda \in \d$) and
$$
f(\lambda)=|D(\lambda)|^2
$$
for all $\lambda \in \t$. 
\end{lemma}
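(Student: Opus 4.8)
The plan is to reduce the statement to a root-extraction argument for an ordinary analytic polynomial, exploiting the reciprocal symmetry that nonnegativity forces on it.

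\emph{Preliminary reductions.} Since $f$ takes real values on $\T$, its coefficients satisfy $a_{-i}=\overline{a_i}$, and since $\deg f=n$ we have $a_n\neq0$, hence also $a_{-n}=\overline{a_n}\neq0$. Form the analytic polynomial
\[
q(z)=z^n f(z)=\sum_{j=0}^{2n}a_{j-n}z^j,
\]
which has exact degree $2n$ and satisfies $q(0)=a_{-n}\neq0$, so all $2n$ of its zeros are nonzero. The relation $a_{-i}=\overline{a_i}$ says exactly that $q^{\sim 2n}=q$, i.e. $q$ is $2n$-symmetric; consequently the zeros of $q$ occur in pairs $\{\zeta,1/\overline{\zeta}\}$ of equal multiplicity, a zero on $\T$ being its own partner.

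\emph{The key step, and the main obstacle.} I would next show that every zero of $q$ on $\T$ has even multiplicity. This is the one place where nonnegativity of $f$ is used, and it is the crux of the argument: the symmetry $q^{\sim 2n}=q$ alone does not give evenness. On $\T$ we have $f(\lambda)=\lambda^{-n}q(\lambda)\ge0$; writing $\lambda=e^{it}$, the function $t\mapsto f(e^{it})$ is a nonnegative trigonometric polynomial, hence attains a local minimum at any of its zeros $e^{it_0}$. Factoring $q(\lambda)=(\lambda-e^{it_0})^k g(\lambda)$ with $g(e^{it_0})\neq0$ and using $e^{it}-e^{it_0}\sim ie^{it_0}(t-t_0)$, one gets $f(e^{it})\sim\mathrm{const}\cdot(t-t_0)^k$ near $t_0$, which is $\ge0$ on both sides only if $k$ is even.

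\emph{Construction of $D$ and verification.} List the zeros of $q$ as $\zeta_1,\dots,\zeta_r\in\D$ (with multiplicity), their partners $1/\overline{\zeta_1},\dots,1/\overline{\zeta_r}\in\{|z|>1\}$, and the zeros on $\T$, which by the key step have even total multiplicity $2m$; thus $2r+2m=2n$, i.e. $r+m=n$. Writing the $\T$-zeros once per coincident pair as $\eta_1,\dots,\eta_m$, set
\[
D(z)=c\prod_{i=1}^{r}\bigl(z-1/\overline{\zeta_i}\bigr)\prod_{j=1}^{m}(z-\eta_j),
\]
so that $\deg D=r+m=n$ and $D$ has no zero in $\D$, i.e. $D$ is outer. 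To verify $f=|D|^2$ on $\T$ I would use the boundary identities $(z-\zeta)(z-1/\overline{\zeta})=-z\zeta\,\lvert z-1/\overline{\zeta}\rvert^2$ and $(z-\eta)^2=-z\eta\,\lvert z-\eta\rvert^2$, valid for $|z|=|\eta|=1$, which combine to give
\[
q(z)=a_n(-z)^{n}\Bigl(\prod_{i=1}^{r}\zeta_i\prod_{j=1}^{m}\eta_j\Bigr)\frac{|D(z)|^2}{|c|^2},\qquad z\in\T.
\]
Hence $f(z)=z^{-n}q(z)=\kappa\,|D(z)|^2$ on $\T$ with $\kappa=(-1)^n a_n\prod_i\zeta_i\prod_j\eta_j/|c|^2$, a nonzero scalar which must be a positive real because $f\ge0$ and $f\not\equiv0$; choosing $|c|$ so that $\kappa=1$ completes the proof. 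An essentially equivalent alternative is induction on $n$: pick any zero $\zeta$ of $q$ and peel off from $f$ the factor $|1-\overline{\zeta}z|^2$ if $|\zeta|<1$, $|z-\zeta|^2$ if $|\zeta|>1$, or $|z-\zeta|^2$ arising from a double root if $|\zeta|=1$, leaving a nonnegative trigonometric polynomial of degree $n-1$ and an outer linear factor to multiply the inductively obtained $D$.
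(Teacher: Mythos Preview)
The paper does not actually supply a proof of this lemma: it merely quotes it as the ``well-known result of Fej\'er and Riesz'' with references to Rosenblum--Rovnyak and Riesz--Sz.-Nagy, and then uses it as a black box. So there is nothing to compare against on the paper's side.

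Your argument is the standard one and is correct. The reduction to the $2n$-symmetric polynomial $q=z^nf$, the pairing $\zeta\leftrightarrow 1/\overline{\zeta}$ of its roots, the parity argument for the boundary zeros, and the boundary identities you use to verify $f=|D|^2$ are all sound; choosing for $D$ the factor built from the roots in $\{|z|\ge 1\}$ (with half the multiplicity on $\T$) is exactly what makes $D$ outer in the sense the paper uses. One small stylistic remark: in the parity step you should perhaps make explicit that the leading coefficient in $f(e^{it})\sim c\,(t-t_0)^k$ is automatically real (and hence must be positive) because $f$ is real-valued on $\T$; you implicitly use this when concluding $k$ is even.
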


Let us give brief proofs of the following elementary lemmas, which can be used to build $n$-balanced polynomials from their zero sets.
\begin{lemma}\label{lem4.10}
For $\sigma \in \d^-$, let the polynomial $Q_\sigma$ be defined by the formula
$$
Q_\sigma(\lambda)=(\lambda - \sigma)(1-\overline{\sigma}\lambda).
$$
Let $n$ be a positive integer and let $R$ be a nonzero polynomial. $R$ is $n$-balanced if and only if there exist points $\sigma_1, \sigma_2,\ldots,\sigma_n \in \d^-$  and $t_+ >0$ such that
\be\label{eq4.10}
R(\lambda) = t_+\prod_{j=1}^n Q_{\sigma_j}(\lambda),\;\quad \lambda \in \C.
\ee
\end{lemma}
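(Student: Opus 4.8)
The strategy is to characterize $n$-balanced polynomials by analyzing each of the three defining conditions---$\deg(R)\le 2n$, the $2n$-symmetry $R^{\sim 2n}=R$, and the positivity $\lambda^{-n}R(\lambda)\ge 0$ on $\T$---and to show that, together, they force the factored form \eqref{eq4.10}. The easy direction is to verify that any $R$ of the form $t_+\prod_{j=1}^n Q_{\sigma_j}$ is $n$-balanced: each $Q_\sigma(\lambda)=(\lambda-\sigma)(1-\overline\sigma\lambda)$ has degree $2$, so the product has degree $\le 2n$; a direct computation gives $Q_\sigma^{\sim 2}=Q_\sigma$ (indeed $\lambda^2\overline{Q_\sigma(1/\bar\lambda)} = \lambda^2\overline{(1/\bar\lambda-\sigma)(1-\overline\sigma/\bar\lambda)} = (1-\bar\sigma\lambda)(\lambda-\sigma)$), and $\sim$ is multiplicative on polynomials of the appropriate degrees, so $R^{\sim 2n}=R$; finally for $\lambda\in\T$, $1-\overline\sigma\lambda = \lambda(\bar\lambda - \overline\sigma) = \lambda\overline{(\lambda-\sigma)}$, hence $Q_\sigma(\lambda)=\lambda|\lambda-\sigma|^2$, so $\lambda^{-1}Q_\sigma(\lambda)=|\lambda-\sigma|^2\ge 0$ and therefore $\lambda^{-n}R(\lambda) = t_+\prod_j|\lambda-\sigma_j|^2 \ge 0$.

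For the forward direction, suppose $R$ is $n$-balanced. The $2n$-symmetry of $R$ means that its zero set in $\C^\times$ is symmetric under $\lambda\mapsto 1/\bar\lambda$ (with equal multiplicities), and moreover if $\ord_0(R)=m$ then $\deg(R)=2n-m$ by \eqref{eq3.32}, so $R$ has $2n-m$ finite zeros, $m$ of them at $0$. Pairing the $m$ zeros at $0$ with $m$ "zeros at $\infty$" (i.e. the deficiency in degree) and pairing each zero $\sigma\in\D\setminus\{0\}$ with its reflection $1/\bar\sigma$ outside $\overline\D$, and noting that the zeros on $\T$ occur with even multiplicity (this is exactly the content built into $n$-balancedness via positivity on $\T$: if $\lambda^{-n}R$ is a nonnegative trigonometric polynomial it cannot change sign, so each real zero on $\T$ has even order---compare Proposition \ref{prop3.5}), one sees that the $2n$ zeros (counted with the degree-deficiency at $\infty$ contributing $m$ of them) split into exactly $n$ reciprocal pairs. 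Each pair $\{\sigma, 1/\bar\sigma\}$ with $\sigma\in\D^-$ contributes the factor $Q_\sigma$ up to a constant (a zero at $0$ paired with one at $\infty$ gives $Q_0(\lambda)=\lambda$; a pair on $\T$ gives $(\lambda-\sigma)^2$ up to a unimodular constant, which equals $Q_\sigma$ there since $\bar\sigma=1/\sigma$). Collecting these, $R(\lambda)=c\prod_{j=1}^n Q_{\sigma_j}(\lambda)$ for some $c\in\C^\times$ and some $\sigma_j\in\D^-$. It remains to see $c=t_+>0$: since $\lambda^{-n}R(\lambda)=c\prod_j|\lambda-\sigma_j|^2$ on $\T$ must be $\ge 0$ and is not identically zero, evaluating at a point where the product is positive forces $c>0$.

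The step I expect to require the most care is the bookkeeping in the middle paragraph: making precise the claim that the multiset of zeros of a $2n$-symmetric polynomial of degree $2n-m$, together with an $m$-fold "zero at infinity," decomposes into exactly $n$ reciprocal pairs $\{\sigma, 1/\bar\sigma\}$ with a representative in $\D^-$, and in particular that zeros on $\T$ pair up among themselves (using their even order). The cleanest way to package this is probably an induction on $n$: if $R$ is $n$-balanced and nonconstant, pick any zero $\sigma$ of $R$ with $|\sigma|\le 1$ (one exists unless $R$ is a nonzero constant times $\lambda^n$, in which case $R=c\prod Q_0$ and we are done), or a zero at infinity (degree deficiency) in which case pair it with a forced zero at $0$; divide $R$ by $Q_\sigma$ and check that the quotient is $(n-1)$-balanced---degree drops by $2$ and stays $\le 2(n-1)$, $2(n-1)$-symmetry is preserved because $R^{\sim 2n}=R$ and $Q_\sigma^{\sim 2}=Q_\sigma$, and the positivity $\lambda^{-(n-1)}(R/Q_\sigma)(\lambda)\ge 0$ on $\T$ follows from $\lambda^{-1}Q_\sigma(\lambda)=|\lambda-\sigma|^2\ge 0$ and $\lambda^{-n}R(\lambda)\ge 0$, dividing one nonnegative trigonometric polynomial by another (with attention to the zeros of $Q_\sigma$ on $\T$, where the even-order observation keeps the quotient nonnegative). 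The induction then terminates and assembles $R$ as a positive constant times $\prod_{j=1}^n Q_{\sigma_j}$. The base case $n=0$ is just that a $0$-balanced polynomial is a positive constant, which is immediate from $\lambda^0 R \ge 0$.
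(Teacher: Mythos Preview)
Your proposal is correct but follows a genuinely different route from the paper's proof. The paper invokes the Fej\'er--Riesz theorem (Lemma~\ref{lem4.30}) directly: since $\bar\lambda^{n}R(\lambda)\ge 0$ on $\T$, there is an outer polynomial $P$ of degree $n$ with $\bar\lambda^{n}R(\lambda)=|P(\lambda)|^2$ on $\T$; writing $P(\lambda)=c\prod_{j=1}^n(\lambda-\zeta_j)$ with $\zeta_j\in\C\setminus\D$ and expanding $|P(\lambda)|^2$ on $\T$ immediately gives $R=t_+\prod_j Q_{\sigma_j}$ with $\sigma_j=1/\bar\zeta_j\in\D^-$ and $t_+=|c|^2\prod|\zeta_j|^2$. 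Your argument instead proceeds by an elementary induction on $n$: using the $2n$-symmetry to pair zeros as $\{\sigma,1/\bar\sigma\}$, the positivity of $\lambda^{-n}R$ on $\T$ to force even order at unimodular zeros, and then peeling off one $Q_\sigma$ factor and checking the quotient remains $(n-1)$-balanced. Both arguments are valid; the paper's is shorter because Fej\'er--Riesz packages precisely the factorization you build by hand, while yours has the virtue of being self-contained and making the zero-pairing structure explicit without appeal to a named theorem.

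One small slip to fix in your easy direction: you write that ``each $Q_\sigma$ has degree $2$'', but $Q_0(\lambda)=\lambda$ has degree $1$. This does not damage anything---you only need $\deg(\prod_j Q_{\sigma_j})\le 2n$, which still holds, and your verification of $Q_\sigma^{\sim 2}=Q_\sigma$ goes through for $\sigma=0$ as well---but the sentence should be adjusted.
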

Note that there may be repetitions among the $\si_j$.

\begin{proof} It is easy to show that $R$ is $n$-balanced 
if \eqref{eq4.10} holds.

Conversely, let $R$ be $n$-balanced; then $R$ is $2n$-symmetric. We have
$$
 R(\lambda) = r_0 + r_1 \lambda +  \dots + \overline{r_1} \lambda^{2n-1} +  \overline{r_0} \lambda^{2n} 
$$
and 
$$
 r_0 \overline{\lambda}^n + r_1 \overline{\lambda}^{n-1} +  \dots + \overline{r_1} \lambda^{n-1} +  \overline{r_0} \lambda^{n} \ge 0 \; \text{for all} \; \lambda \in \T.  
$$
By Lemma \ref{lem4.30} there exists an outer polynomial 
$$
P(\lambda) = a_0 + a_1 \lambda +  \dots +  a_n \lambda^{n} 
$$
such that
$$
\overline{\lambda}^{n} R(\lambda) = |P(\lambda)|^2 \; \text{for all} \; \lambda \in \T 
$$
and $P$ has all its zeros $\zeta_i$  in $\C \setminus \D$.
Say
$$
P(\lambda) = c (\lambda - \zeta_1)\dots (\lambda - \zeta_n).
$$
Therefore, for all $ \lambda \in \T$,
\begin{align*}
\overline{\lambda}^n R(\lambda) &= |P(\lambda)|^2 \\
 &= |c|^2 (\lambda - \zeta_1)(\overline{\lambda} - \overline{\zeta_1})\dots (\lambda - \zeta_n)(\overline{\lambda} - \overline{\zeta_n})\\
 &= |c|^2 \overline{\lambda}^n(\lambda - \zeta_1)
(1 - \overline{\zeta_1} \lambda)\dots (\lambda - \zeta_n)(1 - \overline{\zeta_n} \lambda) \\
 &= \overline{\lambda}^n (|c| |\zeta_1|\dots|\zeta_n|)^2 Q_{\sigma_1}(\lambda)\dots Q_{\sigma_n}(\lambda)
\end{align*}
where $\sigma_i = 1/\overline{\zeta_i} \in \d^-, \; i =1,\dots, n.$
Thus 
$$
R(\lambda) = t_+ Q_{\sigma_1}(\lambda)\dots Q_{\sigma_n}(\lambda) \; \text{for all} \; \lambda \in \C ,
$$
where $t_+ = (|c| |\zeta_1|\dots|\zeta_n|)^2$.
\end{proof}

The royal polynomial of a rational $\Ga$-inner function $h$ is determined by the royal nodes of $h$, with their multiplicities.

\begin{prop}\label{determined}
Let the royal nodes of a rational $\Ga$-inner function $h$ be $\si_1,\dots,\si_n$, with repetitions according to multiplicity of the nodes as described in Definition {\rm \ref{def3.30}}.
The royal polynomial $R$ of $h$, up to a positive multiple, is
\be\label{formR}
R(\la)= \prod_{j=1}^n Q_{\si_j}(\la).
\ee
\end{prop}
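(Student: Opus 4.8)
The plan is to extract from the royal polynomial $R$ the data of its zeros in $\d^-$ (with the multiplicity convention of Definition \ref{def3.30}), reconcile this with the type $(n,k)$ of $h$, and then show that $R$ coincides, up to a positive scalar, with $\prod_{j=1}^n Q_{\si_j}$. The key point is that $R$ is $n$-balanced (Lemma \ref{royal_polyn}), so Lemma \ref{lem4.10} already tells us that $R = t_+\prod_{j=1}^n Q_{\rho_j}$ for some points $\rho_1,\dots,\rho_n\in\d^-$ and some $t_+>0$; what remains is to identify the multiset $\{\rho_j\}$ with the multiset $\{\si_j\}$ of royal nodes counted as in Definition \ref{def3.30}.

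First I would recall that the royal nodes of $h$ are precisely the zeros of $R$ in $\d^-$, and that by Theorem \ref{prop3.10} the type $(n,k)$ satisfies $n = \deg(h)$, so the royal polynomial is $n$-balanced with exactly $n$ as the relevant degree parameter. Next, for a point $\si\in\d$, the factor $Q_\si(\la)=(\la-\si)(1-\bar\si\la)$ contributes a zero of $R$ at $\si$ of order exactly $1$ per occurrence (its other zero $1/\bar\si$ lies outside $\d^-$), whereas for $\si\in\t$ we have $1/\bar\si=\si$, so $Q_\si(\la)=(\la-\si)(1-\bar\si\la)$ has a \emph{double} zero at $\si$. Thus in the factorization $R=t_+\prod_{j=1}^n Q_{\rho_j}$, a node $\si\in\d$ occurs among the $\rho_j$ with multiplicity equal to $\ord_{\{\si\}}(R)$, while a node $\si\in\t$ occurs among the $\rho_j$ with multiplicity equal to $\tfrac12\ord_{\{\si\}}(R)$ — which is exactly $\#\si$ from Definition \ref{def3.30}. (That $\ord_\t(R)$ is even at each such point is Proposition \ref{prop3.5}.) Hence the multiset $\{\rho_1,\dots,\rho_n\}$, counted with repetition, is precisely the multiset of royal nodes of $h$ listed with multiplicities, i.e.\ $\{\si_1,\dots,\si_n\}$. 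Finally I would absorb $t_+$ into the phrase ``up to a positive multiple,'' giving $R(\la) = \prod_{j=1}^n Q_{\si_j}(\la)$ as claimed.

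The \emph{only} genuine subtlety — and the main thing to get right — is the bookkeeping at points of $\t$: one must verify that the definition of $\#\si$ as $\tfrac12\ell$ (with $\ell=\ord_{\{\si\}}(R)$) is compatible with the fact that each $Q_\si$ with $\si\in\t$ is a full square factor, so that counting $\si$ once in the product $\prod_{j=1}^n Q_{\si_j}$ reproduces exactly a double zero of $R$ there. Since $\sum_j \#\si_j$ over nodes in $\d$ equals $\ord_0(R) + \ord_{\d\setminus\{0\}}(R)$ and over nodes in $\t$ equals $\tfrac12\ord_\t(R)$, relation \eqref{eq3.60} from the proof of Theorem \ref{prop3.10} shows this total is exactly $n$, so the product $\prod_{j=1}^n Q_{\si_j}$ has the right degree parameter to match $R$ as an $n$-balanced polynomial, and the identification of multisets forces equality up to scalar. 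No step here is hard once the multiplicity conventions are lined up, so I expect the write-up to be short.
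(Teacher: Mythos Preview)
Your proposal is correct and follows essentially the same route as the paper: invoke Lemma \ref{royal_polyn} to see that $R$ is $n$-balanced, apply Lemma \ref{lem4.10} to factor $R = t_+\prod_{j=1}^n Q_{\rho_j}$, and then identify the multiset $\{\rho_j\}$ with the royal nodes $\{\si_j\}$ via the multiplicity conventions. Your write-up is in fact more explicit than the paper's about the bookkeeping at points of $\t$ (where each $Q_\si$ contributes a double zero), but the underlying argument is the same.
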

\begin{proof} By Lemma \ref{royal_polyn}, the royal polynomial $R$ of $h$  is $n$-balanced. By Lemma \ref{lem4.10}, there
exists $t_+ >0$ and $\zeta_1,\ldots,\zeta_n\in \d^-$ such that 
\[
R(\la)= t_+ \prod_{j=1}^n Q_{\zeta_j}(\la).
\]
Since the royal nodes of $h$ and their multiplicities are defined in terms of the zeros of $R$ in $\d^-$  and their multiplicities, it follows that the list $\zeta_1,\dots,\zeta_n$ coincides, up to a permutation, with the list $\si_1,\ldots,\si_n$.   Hence  $R$ is given, up to a positive multiple, by equation \eqref{formR}.
\end{proof}
Our next lemma summarizes an elementary procedure for building an $n$-symmetric polynomial from its roots. For $\tau = \e^{i\theta},0 \le \theta <2 \pi$, we define $L_\tau$ by
$$
L_\tau(\lambda) = i\e^{-i \frac{\theta}{2}}(\lambda - \tau).
$$
We note that $L_\tau$ is 1-symmetric:
\begin{align*}
L_\tau^{\sim 1}(\lambda)
&= \lambda \overline{L_\tau(1/\overline {\lambda})}\\
&=\lambda\overline{i\e^{-i\frac{\theta}{2}}(\lambda - \tau)}\\
&=-i\e^{i \frac{\theta}{2}}\overline{\tau}(\tau-\lambda)\\
&=L_\tau(\lambda),
\end{align*}
and that $L_\tau^2 = Q_\tau$:
\begin{align*}
L_\tau^2(\lambda)
&=  (i\e^{-i\frac{\theta}{2}}(\lambda - \tau))^2\\
&=-\overline{\tau}(\lambda - \tau)(\lambda - \tau)\\
&=(\lambda - \tau)(1-\overline{\tau}\lambda)\\
&=Q_\tau(\lambda).
\end{align*}
\begin{lemma}\label{lem4.20}
Let $n$ be a positive integer. A polynomial $E$ is $n$-symmetric if and only if there exist points $\alpha_1, \alpha_2,\ldots,\alpha_{k_0} \in \d$, points $\tau_1, \tau_2,\ldots,\tau_{k_1} \in \t$
 and $t \in \r$ such that
\begin{align}\label{eq4.20}
k_0 &= \ord_0(E) + \ord_{\D\setminus\{0\}}(E), \notag \\
k_1 &= \ord_\T(E),  \notag \\
2k_0 +k_1 &= n \quad \mbox{  and } 
\end{align}
\begin{align}\label{eq4.20_2}
E(\lambda) &= t\prod_{j=1}^{k_0} Q_{\alpha_j}(\lambda)
\prod_{j=1}^{k_1} L_{\tau_j}(\lambda).
\end{align}
\end{lemma}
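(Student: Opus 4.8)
The plan is to derive both implications from the factorization of $E$ over $\C$ together with the identity $E^{\sim n}=E$, using two elementary properties of the operation $f\mapsto f^{\sim m}$, both immediate from its definition: it is conjugate-linear, $(cf)^{\sim m}=\bar c\,f^{\sim m}$, and multiplicative across degrees, $(fg)^{\sim(p+q)}=f^{\sim p}\,g^{\sim q}$ whenever $\deg f\le p$ and $\deg g\le q$. I will also use that $Q_\sigma$ is $2$-symmetric for every $\sigma\in\D^-$: for $\sigma\in\T$ this is the identity $Q_\tau=L_\tau^2$ recorded just before the lemma, for $\sigma=0$ it is the computation $Q_0^{\sim2}(\la)=\la^2\overline{1/\bar\la}=\la=Q_0(\la)$, and the remaining case is equally direct; and that $L_\tau$ is $1$-symmetric, which has already been checked.

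For the ``if'' direction, suppose $E=t\prod_{j=1}^{k_0}Q_{\alpha_j}\prod_{j=1}^{k_1}L_{\tau_j}$ with $\alpha_j\in\D$, $\tau_j\in\T$, $t\in\R$ and $2k_0+k_1=n$. Treating each $Q_{\alpha_j}$ as a polynomial of degree $\le 2$ and each $L_{\tau_j}$ as one of degree $\le 1$, the degree bounds sum to $2k_0+k_1=n$, so multiplicativity and conjugate-linearity of $f\mapsto f^{\sim n}$ give $E^{\sim n}=t\prod Q_{\alpha_j}^{\sim 2}\prod L_{\tau_j}^{\sim 1}=E$; since also $\deg(E)\le n$, $E$ is $n$-symmetric. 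Moreover each $Q_{\alpha_j}$ has exactly one zero in $\D$ (at $\alpha_j$, or at $0$ when $\alpha_j=0$), and, when $\alpha_j\neq0$, one zero of modulus $>1$, while each $L_{\tau_j}$ has exactly one zero, which lies on $\T$; as no $\alpha_j$ lies on $\T$, counting the zeros of $E$ yields $\ord_0(E)+\ord_{\D\setminus\{0\}}(E)=k_0$ and $\ord_\T(E)=k_1$, which are precisely the relations \eqref{eq4.20}.

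For the converse, let $E$ be a nonzero $n$-symmetric polynomial (the case $E=0$ corresponding to $t=0$), and put $m=\deg(E)$. By \eqref{eq3.32} applied to $E$, $\ord_0(E)=n-m$. List the zeros of $E$ with multiplicity: those in $\D$, say $\alpha_1,\dots,\alpha_{k_0}$ (this list contains $n-m$ copies of $0$ and $\ord_{\D\setminus\{0\}}(E)=k_0-(n-m)$ points of $\D\setminus\{0\}$); those on $\T$, say $\tau_1,\dots,\tau_{k_1}$; and those of modulus $>1$. The identity $E^{\sim n}=E$ forces the multiset of nonzero zeros of $E$ to be invariant under the involution $\zeta\mapsto1/\bar\zeta$, since the nonzero zeros of $E^{\sim n}$ are exactly the points $1/\bar\zeta$ as $\zeta$ ranges over the nonzero zeros of $E$, multiplicities preserved. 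Because this involution carries $\D\setminus\{0\}$ bijectively onto $\{\,|\zeta|>1\,\}$ and fixes $\T$ pointwise, the zeros of $E$ of modulus $>1$ are exactly the points $1/\bar\alpha_j$ with $\alpha_j\neq0$, with matching multiplicities. Now $(\la-\alpha_j)(\la-1/\bar\alpha_j)$ is a nonzero scalar multiple of $Q_{\alpha_j}$ for each nonzero $\alpha_j$, the zero of order $n-m$ at the origin contributes $\la^{n-m}$, which is a product of $n-m$ factors $Q_0=\la$, and each $\la-\tau_j$ is a nonzero scalar multiple of $L_{\tau_j}$; collecting all these scalars together with the leading coefficient of $E$ into a single constant $c\in\C$, we obtain
\[
E(\la)=c\prod_{j=1}^{k_0}Q_{\alpha_j}(\la)\prod_{j=1}^{k_1}L_{\tau_j}(\la).
\]
By construction $k_0=\ord_0(E)+\ord_{\D\setminus\{0\}}(E)$ and $k_1=\ord_\T(E)$; and \eqref{eq3.34} applied to $E$ reads $\ord_0(E)+2\,\ord_{\D\setminus\{0\}}(E)+\ord_\T(E)=m$, which, since $\ord_{\D\setminus\{0\}}(E)=k_0-\ord_0(E)$, rearranges to $2k_0+k_1=m+\ord_0(E)=n$.

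It remains to check that $c\in\R$. The polynomial $F:=\prod_{j=1}^{k_0}Q_{\alpha_j}\prod_{j=1}^{k_1}L_{\tau_j}$ is $n$-symmetric by the computation of the ``if'' direction, and is nonzero, so $E=cF$ gives $E=E^{\sim n}=\bar c\,F^{\sim n}=\bar c\,F$, whence $\bar c=c$, i.e. $c\in\R$; taking $t=c$ finishes the proof. The only genuinely delicate points, as opposed to the routine algebra, are: obtaining $\ord_0(E)=n-m$ and handling the zeros at the origin separately from the $\zeta\mapsto1/\bar\zeta$ pairing (they are accounted for by $\la^{n-m}=\prod Q_0$); confirming that the exponents reassemble so that $2k_0+k_1=n$; and the conjugate-symmetry step that forces $c$ to be real. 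I expect these bookkeeping matters, rather than any single hard idea, to be where a careless argument would slip.
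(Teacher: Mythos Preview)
Your proof is correct and follows essentially the same approach as the paper's: both arguments hinge on factoring $E$ over $\C$, observing that the identity $E^{\sim n}=E$ forces the multiset of nonzero roots to be invariant under $\zeta\mapsto1/\bar\zeta$ (so that the roots outside $\D^-$ pair with those in $\D\setminus\{0\}$), and then checking that the residual scalar is real. Your write-up is a bit more systematic in invoking equations \eqref{eq3.32} and \eqref{eq3.34} for the bookkeeping and in deducing $c\in\R$ by applying the already-proved ``if'' direction to $F$, whereas the paper verifies $n=2k_0+k_1$ and $\omega_1\omega_2\in\R$ by a direct expansion of $E^{\sim n}$; these are cosmetic differences only.
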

\begin{proof} Let $E$ be $n$-symmetric, that is,
$$
\lambda^n \overline{E(1/\overline{\lambda})} = E(\lambda) \; \text{for all } \; \lambda \in \C \setminus \{0 \}.
$$
Let
$$
E(\lambda) = c (\lambda - \alpha_1)\dots (\lambda - \alpha_{k_0})(\lambda - \eta_1)\dots (\lambda - \eta_{k_1})(\lambda - \beta_1)\dots (\lambda - \beta_{k_2})\; \text{for all} \; \lambda \in \C,
$$
where $\alpha_1, \dots, \alpha_{k_0} \in \D$, $ \;\eta_1, \dots, \eta_{k_1} \in \T$ and $ \;\beta_1, \dots, \beta_{k_2} \in \C \setminus \d^-$.

Note that if $\alpha \in \D \setminus \{0 \}$ is a zero of $E$ then $1/\overline{\alpha} $ is also a zero of $E$ since
$$
\alpha^n \overline{E(1/\overline{\alpha})} = E(\alpha)=0.
$$
Hence, for some $\nu$, $\nu \le k_0$,
$$
E(\lambda) = c \lambda^\nu (\lambda - \alpha_{\nu +1})\dots (\lambda - \alpha_{k_0})(\lambda - \eta_1)\dots (\lambda - \eta_{k_1})(\lambda - 1/\overline{\alpha_{\nu +1}})\dots (\lambda - 1/\overline{\alpha_{k_0}})
$$
for all $ \lambda \in \C.$
Thus
\begin{align*}
E(\lambda) &= \omega_1 \lambda^\nu (\lambda - \alpha_{\nu +1})(1 - \overline{\alpha_{\nu +1}}\lambda)\dots (\lambda - \alpha_{k_0})(1-\overline{\alpha_{k_0}}\lambda) 
(\lambda - \eta_1)\dots (\lambda - \eta_{k_1})\\
	& = \omega_1 \omega_2
Q^\nu_{0}(\lambda)Q_{\alpha_{\nu +1}}(\lambda)\dots
Q_{\alpha_{k_0}}(\lambda) L_{\eta_1}(\lambda)\dots L_{\eta_{k_1}}(\lambda)
\end{align*}
for all $ \lambda \in \C$, 
where $\omega_1= c (-1/\overline{\alpha_{\nu +1}})\dots (-1/\overline{\alpha_{k_0}})$ and $\omega_2 = \overline{i\e^{-i \frac{\eta_1}{2}} \dots i\e^{-i \frac{\eta_{k_1}}{2}}}$.
Recall that  $E$ is $n$-symmetric, $L$ is $1$-symmetric and $Q$ is $2$-symmetric; therefore
\begin{align*}
E(\lambda) &= \omega_1 \omega_2
Q^\nu_{0}(\lambda)Q_{\alpha_{\nu +1}}(\lambda)\dots
Q_{\alpha_{k_0}}(\lambda) L_{\eta_1}(\lambda)\dots L_{\eta_{k_1}}(\lambda)\\
	&= \lambda^n \overline{E(1/\overline{\lambda})}\\
	&= \lambda^n \overline{\omega_1 \omega_2} \overline{Q^\nu_{0}(1/\overline{\lambda})}\overline{Q_{\alpha_{\nu +1}}(1/\overline{\lambda})}\dots \overline{Q_{\alpha_{k_0}}(1/\overline{\lambda})}\overline{L_{\eta_1}(1/\overline{\lambda})} \dots 
\overline{L_{\eta_{k_1}}(1/\overline{\lambda})} \\
	&= \lambda^{n-2k_0-k_1} \overline{\omega_1 \omega_2} 
Q^\nu_{0}(\lambda) Q_{\alpha_{\nu +1}}(\lambda) \dots
Q_{\alpha_{k_0}}(\lambda) L_{\eta_1}(\lambda)\dots L_{\eta_{k_1}}(\lambda)
\end{align*}
for all $\lambda \in \C \setminus \{0 \}.$
Hence, $n =2k_0 +k_1$ and $\omega_1 \omega_2 \in \R$.

The converse result is easy.
\end{proof}

Note that there may be repetitions in the lists $\al_1,\ldots,\al_{k_0}$ and $\tau_1,\ldots, \tau_{k_1}$ above.

\begin{remark}\label{nodes-zeros} \rm
If $h=(s,p)$ is a rational $\Ga$-inner function 
 then no zero of $s$ on $\t$ can be a royal node of $h$.  For if $s=0=s^2-4p$ then $p=0$, whereas $|p|=1$ at every point of $\t$ at which $p$ is defined, including every royal node of $h$ on $\t$.
\end{remark}

We can now elucidate Theorem \ref{thm1.15} on the existence of rational $\Ga$-inner functions of prescribed degree with a given nodal set and a  given zero set of $s$.  The following result not only asserts the existence of the desired function, but also describes how to construct all such functions.

\begin{theorem}\label{thm4.10}
Let $n$ be a positive integer and suppose 
  points $\alpha_1, \alpha_2,\ldots,\alpha_{k_0} \in \d$ and 
 $\tau_1, \tau_2,\ldots,\tau_{k_1} \in \t$ are given, where $2k_0 +k_1 = n$, and
 points $\si_1,\dots,\si_n$  in $\d^-$ distinct from $\tau_1,\ldots,\tau_{k_1}$.

There exists a rational $\gaminn$ function $h=(s,p)$ of degree $n$ such that 
\begin{enumerate}
\item the zeros of $s$, repeated according to multiplicity, are 
$\alpha_1, \alpha_2,\ldots,\alpha_{k_0}$ 
\newline and $\tau_1, \tau_2,\ldots,\tau_{k_1}$, 
\item the royal nodes of $h$ are $\si_1,\dots,\si_n$.  
\end{enumerate}

Such a function $h$ can be constructed as follows.
 Let $t_+>0$ and let $t \in \R \setminus \{0 \}$. Let $R$ and $E$ be defined by
\[
R(\la)= t_+ \prod_{j=1}^n (\la-\sigma_j)(1-\overline \si_j\lambda)
\]
and 
\[
E(\la)= t\prod_{j=1}^{k_0} (\la-\alpha_j)(1-\overline\al_j \lambda) \prod_{j=1}^{k_1} i\e^{-i\theta_j/2}(\la-\tau_j)
\]
where $\tau_j=\e^{i\theta_j}, \, 0\leq \theta_j < 2\pi$. 

\noindent {\rm (i)}  There exists an outer polynomial $D$ of degree at most $n$ such that
\be\label{eq4.30}
\lambda^{-n}R(\lambda) + |E(\lambda)|^2 = 4|D(\lambda)|^2
\ee
for all $\la\in\T$. 

\noindent {\rm (ii)} The function $h$ defined by
$$
h=(s, p) = \left(\frac{E}{D},\frac{D^{\sim n}}{D}\right)
$$
is a rational $\gaminn$ function such that $\deg(h)= n$ and conditions {\rm (1)} and {\rm (2)} hold.
The royal polynomial of $h$ is  $R$. 
\end{theorem}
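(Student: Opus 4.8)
The plan is to verify directly that the explicitly constructed $E$ and $D$ satisfy the hypotheses of the converse part of Proposition~\ref{prop2.20}, and then to track the royal polynomial and the zeros of $s$. First I would note that $R$ is $n$-balanced: it is $2n$-symmetric by construction (each factor $Q_{\sigma_j}$ is $2$-symmetric), its degree is at most $2n$, and for $\lambda\in\T$ we have $\lambda^{-n}R(\lambda)=t_+\prod_j|\lambda-\sigma_j|^2\ge 0$ (using $\overline\lambda^{\,j}Q_{\sigma_j}(\lambda)=|\lambda-\sigma_j|^2$ on $\T$, the same identity exploited in the proof of Lemma~\ref{lem4.10}). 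Next, by Lemma~\ref{lem4.20}, the polynomial $E$ is $n$-symmetric, so $E^{\sim n}=E$ and $\deg(E)\le n$; moreover $E^2$ is $2n$-symmetric, and on $\T$ we have $\lambda^{-n}E(\lambda)^2=|E(\lambda)|^2\ge 0$, so $\lambda^{-n}E^2$ is also $n$-balanced. Hence the trigonometric polynomial
\[
f(\lambda) := \lambda^{-n}R(\lambda) + |E(\lambda)|^2, \qquad \lambda\in\T,
\]
is a non-negative trigonometric polynomial of degree at most $n$ (it equals $\lambda^{-n}(R(\lambda)+E(\lambda)^2)$ as a Laurent polynomial, and $R+E^2$ is $2n$-symmetric of degree $\le 2n$). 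Applying the Fej\'er--Riesz Lemma~\ref{lem4.30} to $\tfrac14 f$ yields an outer polynomial $D$ of degree at most $n$ with $|D(\lambda)|^2=\tfrac14 f(\lambda)$ for all $\lambda\in\T$, which is exactly \eqref{eq4.30}. This proves part~(i).

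For part~(ii), since $D$ is outer we have $D(\lambda)\neq 0$ on $\D$, so conditions (i), (iii) hold in the (converse of) Proposition~\ref{prop2.20}, and (ii) holds because $E$ is $n$-symmetric. Condition (iv), namely $|E(\lambda)|\le 2|D(\lambda)|$ on $\d^-$, follows on $\T$ directly from \eqref{eq4.30} since $\lambda^{-n}R(\lambda)\ge 0$ there; it then extends to $\d^-$ by the maximum principle applied to the bounded analytic function $E/2D$ on $\D$ (note $2D$ is zero-free on $\d^-$ by outerness together with the fact that, after Fej\'er--Riesz, the zeros of $D$ lie off $\D$; any zeros of $D$ on $\T$ would force matching zeros of $E$ and of $R$ there, but this causes no difficulty — we simply apply the maximum modulus principle to $E/2D$ which is holomorphic and bounded on $\D$ with boundary modulus $\le 1$ a.e.). So by the converse direction of Proposition~\ref{prop2.20}, $h=(s,p)=(E/D,\,D^{\sim n}/D)$ is a rational $\gaminn$ function of degree at most $n$. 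The royal polynomial of $h$ is $4D D^{\sim n}-E^2$; multiplying \eqref{eq4.30} through by $\lambda^n$ and using that on $\T$ one has $D^{\sim n}(\lambda)=\lambda^n\overline{D(\lambda)}$, $E^{\sim n}=E$, one gets $4D(\lambda)D^{\sim n}(\lambda)-E(\lambda)^2 = R(\lambda)$ for all $\lambda\in\T$; since both sides are polynomials of degree $\le 2n$ agreeing on the infinite set $\T$, they are equal as polynomials. Thus the royal polynomial of $h$ is exactly $R$.

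It remains to identify the degree, the royal nodes, and the zeros of $s$. The royal nodes of $h$ are, by definition, the zeros of $R$ in $\d^-$ counted with the multiplicity convention of Definition~\ref{def3.30}; by construction these are precisely $\si_1,\dots,\si_n$ (with the $\T$-zeros of $R=t_+\prod Q_{\si_j}$ automatically of even order, halved in the count), giving condition~(2), and in particular the sum of royal-node multiplicities in $\d^-$ equals $n$, so by Theorem~\ref{prop3.10} $\deg(h)=n$ — equivalently one can argue directly that $\deg(D)=n$ (if $\deg D<n$ then $D D^{\sim n}$ has order $>0$ at $0$; combined with $\ord_0 E\ge 0$ this would force $\ord_0 R>0$, contradicting $R(0)=t_+\prod(-\sigma_j)(1)$ unless some $\sigma_j=0$, so more care is needed — cleanest is to invoke Theorem~\ref{prop3.10} as above). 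For the zeros of $s=E/D$ on $\d^-$: since $D\ne 0$ on $\d^-$, the zeros of $s$ in $\d^-$ are exactly the zeros of $E$ in $\d^-$, which by construction are $\alpha_1,\dots,\alpha_{k_0}$ in $\D$ (each appearing, via $Q_{\alpha_j}$, together with its reflection $1/\overline{\alpha_j}$ outside $\d^-$, so contributing just $\alpha_j$ to the zero set in $\d^-$) and $\tau_1,\dots,\tau_{k_1}$ on $\T$ (via the linear factors $L_{\tau_j}$). By Remark~\ref{nodes-zeros} no $\tau_j$ can be a royal node, which is why the hypothesis requires the $\sigma_i$ distinct from the $\tau_j$ — this consistency is exactly what makes the construction legitimate; there is no conflict between conditions (1) and (2). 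The main obstacle in the whole argument is the verification of condition (iv) off the circle and the bookkeeping when $D$ has zeros on $\T$ (equivalently when $E$, $R$ share $\T$-zeros), which is handled by the maximum principle as indicated; everything else is direct computation with the Fej\'er--Riesz factorization and the symmetry identities.
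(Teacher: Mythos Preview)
Your overall strategy matches the paper's: build $E$ via Lemma~\ref{lem4.20}, observe that $R$ is $n$-balanced, apply Fej\'er--Riesz to get $D$, then invoke the converse of Proposition~\ref{prop2.20}. The gap lies in your handling of the disjointness hypothesis $\{\sigma_j\}\cap\{\tau_j\}=\emptyset$, which is the crux of part~(ii). You note correctly that any zero of $D$ on $\T$ would force matching zeros of $E$ and of $R$ there, but then dismiss this as causing ``no difficulty'' and move on. In fact that observation \emph{is} the point: the zeros of $E$ on $\T$ are exactly the $\tau_j$ and the zeros of $R$ on $\T$ lie among the $\sigma_j$, so by disjointness $D$ cannot vanish on $\T$. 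This is how the paper argues: since $\lambda^{-n}R(\lambda)$ and $|E(\lambda)|^2$ have no common zero on $\T$, their sum is strictly positive there, hence $D$ is zero-free on $\T$, hence $D$ and $D^{\sim n}$ share no factor and $\deg(p)=n$ directly.

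Without this step several later parts of your argument are incomplete. First, in Lemma~\ref{lem4.30} ``outer'' means zero-free on $\D$, not on $\D^-$, so your claim that $2D$ is zero-free on $\d^-$ ``by outerness'' is unjustified as stated. Second, if $D$ did vanish at some $\tau_j\in\T$, that $\tau_j$ would be a removable singularity of $s=E/D$ rather than a zero, so condition~(1) would fail. Third, your detour through Theorem~\ref{prop3.10} to obtain $\deg(h)=n$ is circular: the royal polynomial in equation~\eqref{eq3.25} is defined from the representation of Proposition~\ref{prop2.20}, which already presupposes $n=\deg(h)$ and $D\ne 0$ on $\D^-$; your identity $4DD^{\sim n}-E^2=R$ only identifies the royal polynomial of $h$ \emph{after} those facts are in hand. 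The fix is the one-line use of disjointness indicated above, after which everything in your outline goes through cleanly.
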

\begin{proof}
(i) 
By Lemma \ref{lem4.10}, $R$ is $n$-balanced, and so
$\lambda^{-n}R(\lambda) \ge 0$ for all $\lambda \in \t$.
Therefore
$$
\lambda^{-n}R(\lambda) + |E(\lambda)|^2 \ge 0
$$
for all $\lambda \in \t$. By Lemma \ref{lem4.30}, there exists  an outer polynomial $D$ of degree at most $n$ such that the
equality \eqref{eq4.30} holds.

(ii) By Lemma \ref{lem4.20},  the  polynomial $E$ is $n$-symmetric.
Let $D$ be an  outer polynomial of degree at most $n$ such that
the equality \eqref{eq4.30} holds for all $\la\in\T$.
By hypothesis
$$
\set{\sigma_j}{1 \le j \le n} \cap \set{\tau_j}{1 \le j \le k_1} = \emptyset.
$$
Then $\lambda^{-n}R(\lambda)$ and  $ |E(\lambda)|^2 $ are non-negative trigonometric polynomials on $\T$ with no common zero.
Thus 
$$
\lambda^{-n}R(\lambda) + |E(\lambda)|^2 > 0 \; \text{on} \; \T.
$$
By the equality \eqref{eq4.30}, $D$ has no zero on $\T$, and so $D$ and $D^{\sim n}$ have no common factor. Hence 
$$\deg(p)= \deg\left(\frac{D^{\sim n}}{D}\right) = \max\{\deg(D),\deg(D^{\sim n})\}=n.$$

Since
$\lambda^{-n}R(\lambda) \ge 0$ for all $\lambda \in \t$,
$$
4|D(\lambda)|^2 =
\lambda^{-n}R(\lambda) + |E(\lambda)|^2 \ge |E(\lambda)|^2
$$
for all $\lambda \in \t$. Thus
$$
|E(\lambda)| \le 2 |D(\lambda)|
$$
for all $\lambda \in \t$. Since $\ D(\lambda) \neq 0$ on $ \d^-$, we have  
$$\left|\frac{E(\lambda)}{D(\lambda)} \right| \le 2\;\;\text{for all}\;\; \lambda \in \d^-.$$
By the converse of Proposition \ref{prop2.20}, $$
h(\lambda)=\left(\frac{E(\lambda)}{D(\lambda)},\frac{D^{\sim n}(\lambda)}{D(\lambda)}\right),
$$
is a rational $\gaminn$ function with  $\deg(h)= n$.

The royal polynomial of $h$ is defined in equation \eqref{eq3.30} by
$$R_h(\lambda) = 4D(\lambda)D^{\sim n}(\lambda)-E(\lambda)^2 .$$
By equation \eqref{eq3.36}, for all $\lambda \in \T$,
$$\lambda^{-n}R_h(\lambda) = 4D(\lambda)\overline{D(\lambda)}-E(\lambda)\overline{E(\lambda)} .$$
By equation \eqref{eq4.30}, for  all $\lambda \in \T$,
$$\lambda^{-n}R_h(\lambda) =\lambda^{-n}R(\lambda).$$
Thus the royal polynomial of $h$ is equal to $R$.
\end{proof}

For large $n$ the task of finding an outer polynomial $D$ satisfying equation \eqref{eq4.30} cannot be solved algebraically.  It can, however, be efficiently solved numerically; engineers call this the problem of {\em spectral factorization}, and they have elaborated effective algorithms for it -- see for example \cite{mathworks}.

The solution $D$ of the spectral factorization \eqref{eq4.30} is only determined up to multiplication by a unimodular constant $\bar\omega$.  If we replace $D$ by $\bar\omega D$ then we obtain a new solution
\[
h=\left(\omega\frac{E}{D}, \omega^2\frac{D^{\sim n}}{D}\right).
\]
It appears at first sight that the construction in Theorem {\rm \ref{thm4.10}} gives us a $3$-parameter family of rational $\Ga$-inner functions with prescribed royal nodes and prescribed zero set of $s$, since we may choose $t_+,\, t$ and $\omega$ independently.  However, the choice of $1, \, t/\sqrt{t_+}, \, D/\sqrt{t_+}$ and $\omega$ leads to the same $h$ as $t_+,\, t, \, D$ and $\omega$. The following statement tells us that the construction yields {\em all} solutions of the problem, and so the family of functions $h$ with the required properties is indeed a $2$-parameter family.

\begin{proposition}\label{general_sol} Let $h=(s,p)$ be a rational $\gaminn$ function of degree $n$ such that 
\begin{enumerate}
\item the zeros of $s$, repeated according to multiplicity, are 
$\alpha_1, \alpha_2,\ldots,\alpha_{k_0}\in\d$, \newline $\tau_1, \tau_2,\ldots,\tau_{k_1}\in\t$,  where $2k_0 +k_1 = n$, and 
\item the royal nodes of $h$ are $\si_1,\dots,\si_n$.  
\end{enumerate}
There exists some choice of $t_+ >0$, $t \in \R \setminus \{0 \}$  and $\omega \in \T$ such that the recipe 
in Theorem {\rm \ref{thm4.10}} with these choices produces the function $h$.
\end{proposition}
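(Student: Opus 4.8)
I would begin from the canonical pair of polynomials attached to $h$ by Proposition \ref{prop2.20}: choose $E_0,D_0$ satisfying {\rm (i)--(vi)} there, so that $D_0$ is zero-free on $\d^-$, $s=E_0/D_0$, $p=D_0^{\sim n}/D_0$, $E_0$ is $n$-symmetric, and the pair $(E_0,D_0)$ is determined up to a common nonzero real scalar. Since $D_0$ has no zero in $\d^-$, the zeros of $s$ in $\d^-$, counted with multiplicity, are exactly the zeros of $E_0$ in $\d^-$, and by hypothesis these are $\al_1,\dots,\al_{k_0}\in\d$ and $\tau_1,\dots,\tau_{k_1}\in\t$. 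Thus $E_0$ is an $n$-symmetric polynomial with $\ord_0(E_0)+\ord_{\d\setminus\{0\}}(E_0)=k_0$, $\ord_\t(E_0)=k_1$ and $2k_0+k_1=n$, so Lemma \ref{lem4.20} supplies a real $t_0$, nonzero because $s\not\equiv0$, with $E_0=t_0\prod_{j=1}^{k_0}Q_{\al_j}\prod_{j=1}^{k_1}L_{\tau_j}$. This is precisely the polynomial $E$ of Theorem \ref{thm4.10} for the choice $t=t_0$.

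Next I would pin down the royal polynomial. By Lemma \ref{royal_polyn} the royal polynomial $R_h=4D_0D_0^{\sim n}-E_0^2$ of $h$ is $n$-balanced, and since its royal nodes are $\si_1,\dots,\si_n$, Proposition \ref{determined} gives a constant $c_0>0$ with $R_h=c_0\prod_{j=1}^n Q_{\si_j}$; this is the polynomial $R$ of Theorem \ref{thm4.10} for the choice $t_+=c_0$. Running the recipe of Theorem \ref{thm4.10} with $t_+=c_0$ and $t=t_0$ produces an outer polynomial $D$ with $\deg D\le n$ and $\la^{-n}R_h(\la)+|E_0(\la)|^2=4|D(\la)|^2$ on $\t$. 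On the other hand, equation \eqref{eq3.36} applied to $R_h$ gives $\la^{-n}R_h(\la)=4|D_0(\la)|^2-|E_0(\la)|^2$ for $\la\in\t$, whence $|D(\la)|=|D_0(\la)|$ for all $\la\in\t$.

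Now I would deduce that $D$ and $D_0$ coincide up to a unimodular constant. Since $D_0$ is zero-free on $\d^-$, and since $D$ is outer while $|D|=|D_0|\ne0$ on $\t$ forces $D$ to be zero-free on $\d^-$ as well, both $D/D_0$ and $D_0/D$ are holomorphic on $\d^-$ and of modulus $1$ on $\t$; by the maximum principle each has modulus at most $1$ on $\d$, so $D=\bar\omega_0 D_0$ for some $\omega_0\in\t$. Then $D^{\sim n}=\omega_0 D_0^{\sim n}$, so the function produced by the recipe with $(t_+,t)=(c_0,t_0)$ and spectral factor $D$ is $(E_0/D,\,D^{\sim n}/D)=(\omega_0 s,\,\omega_0^2 p)$. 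As noted just before the proposition, replacing $D$ by $\bar\omega D$ with $\omega\in\t$ changes this to $(\omega\omega_0 s,\,\omega^2\omega_0^2 p)$, and the choice $\omega=\bar\omega_0$ returns $(s,p)=h$. Hence $t_+=c_0$, $t=t_0$, $\omega=\bar\omega_0$ is a triple of parameters for which the recipe of Theorem \ref{thm4.10} yields $h$.

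The one step that needs genuine care is the implication $|D|=|D_0|$ on $\t$ $\Rightarrow$ $D=\bar\omega_0 D_0$: here it is essential that $D_0$ has no zeros anywhere on $\d^-$ (so that $D/D_0$ is a bona fide holomorphic function on the closed disc) and that $D$, being outer, inherits zero-freeness on $\d^-$ from $|D|=|D_0|$ on $\t$; one then combines the two maximum-modulus estimates. Everything else is bookkeeping of how the real scalars $t_0,c_0$ and the unimodular ambiguity of the spectral factorization propagate through the formulas $s=E/D$ and $p=D^{\sim n}/D$.
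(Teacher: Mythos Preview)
Your proof is correct and follows essentially the same route as the paper's: obtain $E_0,D_0$ from Proposition \ref{prop2.20}, identify $E_0$ with the $E$ of Theorem \ref{thm4.10} via Lemma \ref{lem4.20}, identify $R_h$ with the $R$ of Theorem \ref{thm4.10} via Proposition \ref{determined}, and then use equation \eqref{eq3.36} to see that $D_0$ satisfies the spectral factorization \eqref{eq4.30}. The only cosmetic difference is that the paper phrases the last step as ``$D_1$ is a permissible choice for $\omega D$'', whereas you first produce the outer factor $D$ from the recipe and then prove $D=\bar\omega_0 D_0$ via the maximum principle; these are two ways of saying the same thing, and your version makes the unimodular ambiguity more explicit.
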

\begin{proof} By Proposition \ref{prop2.20}, 
 there exist polynomials $E_1$ and $D_1$ such that 
$\deg(E_1),\deg(D_1) \le n$, $E_1$ is $n$-symmetric, $D_1(\lambda) \neq 0$ on $ \d^-$,
and
\[
s=\frac{E_1}{D_1}\; \text{ and} \;
p=\frac{D_1^{\sim n}}{D_1}\; \text{on} \; \d^-. 
\]
By hypothesis,
the zeros of $s$, repeated according to multiplicity, are 
$\alpha_1, \alpha_2,\ldots,\alpha_{k_0}$, $\tau_1, \tau_2,\ldots,\tau_{k_1}$,  where $2k_0 +k_1 = n$.
Since $E_1$ is $n$-symmetric, by  Lemma \ref{lem4.20}, there exists $t \in \R \setminus \{0 \}$  such that
\[
E_1(\la)= t\prod_{j=1}^{k_0} (\la-\alpha_j)(1-\overline\al_j \lambda) \prod_{j=1}^{k_1} i\e^{-i\theta_j/2}(\la-\tau_j).
\]
The royal nodes of $h$ are assumed to be $\si_1,\dots,\si_n$.
By Proposition \ref{determined}, for the royal polynomial $R_1$ of $h$,  there
exists $t_+ >0$ such that 
\[
R_1(\la)= t_+ \prod_{j=1}^n Q_{\si_j}(\la).
\]
By the equality \eqref{eq3.36}, for the royal polynomial $R_1$ of $h$, we have 
\[
\lambda^{-n}R_1(\lambda) = \lambda^{-n}(4D_1(\lambda)D_1^{\sim n}(\lambda)-E_1(\lambda)^2)
=4|D_1(\lambda)|^2-|E_1(\lambda)|^2,
\]
for $\lambda \in \t$. 
Since $E_1$ and $R_1$ coincide with $E$ and $R$ in the construction of Theorem \ref{thm4.10}, for a suitable choice of
of $t_+ >0$ and $t \in \R \setminus \{0 \}$, $D_1$ is a permissible choice for $\omega D$ for some $\omega \in \T$, as a solution of the equation \eqref{eq4.30}. Hence the construction of Theorem \ref{thm4.10} yields $h$ for the appropriate
 choices of $t_+ >0$, $t \in \R \setminus \{0 \}$ and $\omega$. 
\end{proof}

\section{$s$-Convexity and $s$-extremity} \label{convexity}

The distinguished boundaries of the bidisc $\D^2$ and the ball $\mathbb{B}_2$ contain no line segments. Every inner function in $\hol(\D,\D^2)$ is therefore an extreme point of $\hol(\D,\D^2)$, and likewise for $\hol(\D,\mathbb{B}_2)$.  This property contrasts sharply with the situation in the symmetrized bidisc.

 $\Gamma$ is not a convex set, but it is convex in $s$ for fixed $p\in\D^-$.  That is, the set
\beq\label{gamp}
\Gamma \cap  (\C\times \{p\}) = \{(s,p)\in\C^2: |s-\bar s p|\leq 1-|p|^2 \mbox { and } |s|\leq 2\}
\eeq
is convex for every $p\in\D^-$, as is easily seen from the expression on the right hand side of equation \eqref{gamp}.
In consequence, some associated sets have a similar property.
\begin{proposition}\label{sconvsets}
The following sets are convex.
\begin{enumerate}
\item  $\Gamma \cap  (\C\times \{p\})$ for any $p\in\D^-$;
\item  $\M \cap  (\C\times \{p\})$ for any $p\in\T$;
\item the set of $\Gamma$-inner functions $(s,p)$ for a fixed inner function $p$.
\end{enumerate}
\end{proposition}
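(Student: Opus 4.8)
The plan is to prove the three assertions in order, using (1) to obtain (2) and (3). For (1), fix $p\in\D^-$; the crucial point is that for this fixed $p$ the map $T_p\colon\C\to\C$ given by $T_p(s)=s-\bar s p$ is $\R$-linear, failing to be $\C$-linear only because of the conjugate. Hence $s\mapsto|s-\bar s p|$ is a seminorm on $\C$ viewed as a real $2$-dimensional space, so $\{s:|s-\bar s p|\le 1-|p|^2\}$ — a sublevel set of this seminorm, equivalently the preimage under the $\R$-linear map $T_p$ of the disc $(1-|p|^2)\D^-$ — is convex; and $\{s:|s|\le 2\}$ is a disc, hence convex. By the description of $\Gamma\cap(\C\times\{p\})$ recorded in \eqref{gamp}, which rests on Proposition \ref{prop2.10}, the set in question is the intersection of these two convex sets and is therefore convex.

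For (2), I would observe that when $p\in\T$ one has $1-|p|^2=0$, so by Proposition \ref{prop2.10} the set $\M\cap(\C\times\{p\})$ is cut out by the conditions $|s|\le 2$ and $s-\bar s p=0$ (the requirement $|p|=1$ being automatic), which are precisely the conditions defining $\Gamma\cap(\C\times\{p\})$ for this $p$. Thus (2) is the special case $|p|=1$ of (1).

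For (3), fix an inner function $p$, take two $\Gamma$-inner functions $(s_0,p)$ and $(s_1,p)$, let $t\in[0,1]$, and put $s_t=(1-t)s_0+ts_1$, which is holomorphic on $\D$. For each $\lambda\in\D$ the points $(s_0(\lambda),p(\lambda))$ and $(s_1(\lambda),p(\lambda))$ lie in $\Gamma\cap(\C\times\{p(\lambda)\})$, which is convex by (1), so $(s_t(\lambda),p(\lambda))\in\Gamma$; hence $(s_t,p)\in\hol(\D,\Gamma)$. By Fatou's theorem the radial limits $s_0^*(\lambda)$, $s_1^*(\lambda)$ and $p^*(\lambda)$ exist for almost every $\lambda\in\T$, and at such $\lambda$ the radial limit of $s_t$ is $(1-t)s_0^*(\lambda)+ts_1^*(\lambda)$, since the limit of a sum is the sum of the limits. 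As $(s_0^*(\lambda),p^*(\lambda))$ and $(s_1^*(\lambda),p^*(\lambda))$ lie in the convex set $\M\cap(\C\times\{p^*(\lambda)\})$ supplied by (2), so does their convex combination $\bigl((1-t)s_0^*(\lambda)+ts_1^*(\lambda),\,p^*(\lambda)\bigr)$; thus $(s_t,p)$ is $\Gamma$-inner, which is the required convexity.

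I do not anticipate a genuine obstacle. The only step requiring a moment's care is the passage to radial limits in (3) together with its compatibility with convex combinations, which is immediate once one notes the exceptional null sets for $s_0$ and $s_1$ can be unioned; and the one substantive ingredient is the $\R$-linearity of $s\mapsto s-\bar s p$ exploited in (1).
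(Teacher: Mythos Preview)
Your proof is correct and follows essentially the same route as the paper: (1) via the convexity visible in the right-hand side of \eqref{gamp}, (2) from the description of $\M$ in Proposition \ref{prop2.10}, and (3) as a consequence of (1) and (2). You have simply made explicit the $\R$-linearity of $s\mapsto s-\bar s p$ and the radial-limit argument that the paper leaves to the reader.
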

To prove (2) observe that $\M \cap  (\C\times \{p\})=\{(s,p): s= \bar s p \mbox { and } |s|\leq 2\}$.  Statement (3) follows easily from the first two. 

We shall summarize these properties by saying that $\Ga, \M$ and the set of $\Ga$-inner functions are {\em $s$-convex}.

In the light of the phenomenon of $s$-convexity it is natural to ask about the extreme points of the set (3) in Proposition \ref{sconvsets}.
\begin{defin}\label{def5.10}
A rational $\gaminn$ function $h$ is {\em $s$-extreme} 
if whenever $h$ has a representation of the form  $h = t h_1 + (1-t) h_2$ with $t \in (0,1)$ and $h_1$ and $h_2$ rational $\gaminn$ functions, $h_1=h_2$.
\end{defin}
Thus $h$ is {$s$-extreme} if and only if it is an extreme point of the set of rational $\Gamma$-inner functions in the usual sense; however, one customarily only speaks of extreme points of {\em convex} sets, and the rational $\Gamma$-inner functions do not constitute a convex set.  It is thus safer to use the term $s$-extreme.   

We show in this section that whether or not a rational $\Ga$-inner function is $s$-extreme depends entirely on how many royal nodes it has in $\T$ (Theorem \ref{2k>n}).

 It follows from the lemma below that a $\Ga$-inner function is $s$-extreme if and only if it is an extreme point of the set in (3) of Proposition \ref{sconvsets} for some inner function $p$.

\begin{lemma}\label{extr_p} 
Let $h=(s,p)$, $h_1=(s_1,p_1)$ and $h_2=(s_2,p_2)$  be $\gaminn$ functions.
If $h =t h_1 + (1-t) h_2$ for some $t$ such that $0 < t < 1$ then $p = p_1= p_2$.
\end{lemma}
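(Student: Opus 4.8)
The plan is to pass to boundary values on the unit circle and exploit the equality case of the triangle inequality for complex numbers; the hypothesis $0<t<1$ is what makes that case bite.

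First I would compare the second coordinates in $h = t h_1 + (1-t) h_2$, which gives $p = t p_1 + (1-t) p_2$ as holomorphic functions on $\D$, and hence, taking radial limits, $p(\lambda) = t p_1(\lambda) + (1-t) p_2(\lambda)$ for almost every $\lambda \in \T$. Since $h$, $h_1$ and $h_2$ are $\Gamma$-inner, their radial limits lie in $\M$ almost everywhere, so Proposition \ref{prop2.10} gives $|p| = |p_1| = |p_2| = 1$ almost everywhere on $\T$; in particular $p$, $p_1$ and $p_2$ are inner functions. Fix a null set off which the boundary identity holds and all three boundary functions are unimodular.

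Now the main step. At every such $\lambda$,
\[
1 = |p(\lambda)| = |t p_1(\lambda) + (1-t) p_2(\lambda)| \le t |p_1(\lambda)| + (1-t)|p_2(\lambda)| = 1,
\]
so equality holds throughout the triangle inequality. Since $t>0$ and $1-t>0$ and $|p_1(\lambda)|=|p_2(\lambda)|=1$, the two summands $t p_1(\lambda)$ and $(1-t) p_2(\lambda)$ are nonzero, and equality in $|z_1+z_2|\le|z_1|+|z_2|$ forces $t p_1(\lambda) = \mu(\lambda)(1-t) p_2(\lambda)$ for some $\mu(\lambda)>0$; comparing moduli yields $\mu(\lambda) = t/(1-t)$, hence $p_1(\lambda) = p_2(\lambda)$. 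Thus $p_1 = p_2$ almost everywhere on $\T$.

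Finally I would upgrade this to an identity on $\D$: the bounded holomorphic function $p_1 - p_2$ has nontangential boundary value $0$ almost everywhere on $\T$, so by the uniqueness theorem for $H^\infty$ it vanishes identically, i.e. $p_1 \equiv p_2$ on $\D$. Then $p = t p_1 + (1-t) p_2 = p_1 = p_2$, as claimed. I do not anticipate a real obstacle; the only point needing (routine) care is that the boundary identity and the unimodularity of $p$, $p_1$, $p_2$ are arranged to hold off a single common null set, so that the pointwise equality-case argument is valid almost everywhere, and that the passage from an a.e. boundary identity back to an identity on $\D$ is justified by the $H^\infty$ uniqueness theorem.
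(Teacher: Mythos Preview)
Your proof is correct and follows essentially the same route as the paper: pass to boundary values, use that $p(\lambda)\in\T$ while $p_1(\lambda),p_2(\lambda)\in\D^-$, and conclude equality almost everywhere on $\T$ (the paper phrases this as ``every point of $\T$ is an extreme point of $\bar\D$'' rather than the triangle-inequality equality case, but these are the same observation). The only cosmetic difference is that you spell out the $H^\infty$ uniqueness step to recover the identity on $\D$, which the paper leaves implicit.
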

\begin{proof}  Since $h =t h_1 + (1-t) h_2$ we have $p =t p_1 + (1-t) p_2$. 
Hence, at any point $\lambda \in \t$, $p(\lambda) =t p_1(\lambda) + (1-t) p_2(\lambda)$. Since the functions $p, p_1, p_2 $ are inner, $p(\la)\in\T$ and both $p_1(\la)$ and $p_2(\la)$ are in $\D^-$.  Since every point of $\t$ is an extreme point of $\bar{\D}$ we have $p(\la) = p_1(\la)= p_2(\la)$ for almost all $\la\in\T$.
\end{proof}

\begin{lemma}\label{lem5.10}
Let $h=(s,p)$ be a rational $\gaminn$ function. For $\tau \in \t$,  $|s(\tau)| =2$ if and only if $\tau$ is a royal node of $h$.  Moreover, $\tau=\e^{it_0}$ is a royal node of $h$ of multiplicity $\nu$ if and only if $| s(\e^{it})|=2 $ to order $2\nu$ at $t=t_0$.
\end{lemma}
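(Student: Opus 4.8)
The plan is to analyze the behaviour of $|s|$ near a point $\tau\in\T$ using the royal polynomial $R$. Write $E$ and $D$ as in Proposition \ref{prop2.20}, so that $s=E/D$ with $D$ nonvanishing on $\d^-$, and recall from \eqref{eq3.36} that for $\lambda\in\T$ we have $\lambda^{-n}R(\lambda)=4|D(\lambda)|^2-|E(\lambda)|^2$, hence
\[
4-|s(\lambda)|^2 \;=\; \frac{\lambda^{-n}R(\lambda)}{|D(\lambda)|^2}\qquad(\lambda\in\T).
\]
Since $D$ has no zeros on $\d^-$, in particular none on $\T$, the denominator $|D(\e^{it})|^2$ is a strictly positive, real-analytic function of $t$ that does not vanish at $t_0$. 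Therefore, near $t=t_0$, the function $4-|s(\e^{it})|^2$ vanishes to exactly the same order (as a function of $t$) as the trigonometric polynomial $g(t):=\e^{-int}R(\e^{it})$. This reduces both assertions of the lemma to a statement about the order of vanishing of $g$ at $t_0$.

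First I would establish the equivalence ``$|s(\tau)|=2$ iff $\tau$ is a royal node.'' By the displayed identity, $|s(\tau)|=2$ iff $g(t_0)=0$ iff $R(\tau)=0$; and by Proposition \ref{prop3.5} the zeros of $R$ on $\T$ have even order, so $R(\tau)=0$ means $\tau$ is a zero of $R$ in $\d^-$, i.e. a royal node of $h$ (as defined via the zeros of $R$ in $\d^-$ in Definition \ref{def3.30}). One caveat to dispatch: a priori one might worry that $R$ could vanish identically, but $R$ being $2n$-symmetric with $\deg(h)=n$ and $h$ nonconstant (the constant case being trivial, or else excluded since $R\equiv 0$ would force $h(\d^-)\subset\royal$, the royal-variety case which one should note separately or simply exclude as in the surrounding text) — in any event $R\not\equiv 0$ for the functions under consideration, so $g\not\equiv 0$ and its order of vanishing at $t_0$ is a well-defined finite integer.

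For the multiplicity statement, suppose $\tau=\e^{it_0}$ is a zero of $R$ of order $\ell$, so by definition its multiplicity as a royal node is $\nu=\tfrac12\ell$ (recall $\ell$ is even). The key computation is: if $R(\lambda)=(\lambda-\tau)^\ell Q(\lambda)$ with $Q(\tau)\neq 0$, then $g(t)=\e^{-int}R(\e^{it})$ vanishes to order exactly $\ell$ at $t=t_0$. This is because $\lambda\mapsto\lambda$ is a local analytic diffeomorphism $t\mapsto\e^{it}$ with nonvanishing derivative $\ii\e^{it_0}$ at $t_0$, so $(\e^{it}-\tau)$ vanishes to order exactly $1$ in $t$ at $t_0$, while $\e^{-int}Q(\e^{it})$ is real-analytic and nonzero at $t_0$; hence $g$ vanishes to order $\ell$. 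Combined with the displayed identity and the nonvanishing of $|D(\e^{it})|^2$, we get that $4-|s(\e^{it})|^2$ vanishes to order exactly $\ell=2\nu$ at $t_0$, which is precisely the assertion ``$|s(\e^{it})|=2$ to order $2\nu$ at $t=t_0$.'' Conversely, if $|s(\e^{it})|=2$ to order $2\nu$ at $t_0$, then by the same identity $g$ vanishes to order $2\nu$, hence $R$ has a zero of order $2\nu$ at $\tau$ (using again that $t\mapsto\e^{it}$ has nonvanishing derivative), so $\#\tau=\nu$.

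I expect the only real point needing care — and the natural ``main obstacle'' — is making precise the translation between order of vanishing of the polynomial $R$ at the point $\tau\in\T$ (a statement about $\lambda$) and order of vanishing in the real parameter $t$ of $g(t)=\e^{-int}R(\e^{it})$ and of $4-|s(\e^{it})|^2$; this is where one must invoke that $\lambda=\e^{it}$ has nonvanishing derivative and that $|D(\e^{it})|^2$ is bounded away from $0$, so that dividing by it does not change the order of vanishing. Everything else is bookkeeping with Proposition \ref{prop2.20}, equation \eqref{eq3.36}, and Proposition \ref{prop3.5}.
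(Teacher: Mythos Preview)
Your proposal is correct and follows essentially the same route as the paper. Both proofs rest on the identity, valid for $\lambda\in\T$,
\[
4-|s(\lambda)|^2 \;=\; \frac{\lambda^{-n}R(\lambda)}{|D(\lambda)|^2} \;=\; -\frac{s(\lambda)^2-4p(\lambda)}{p(\lambda)},
\]
the paper preferring the right-hand form (obtained from $s=\bar s p$ on $\T$) and you the middle one via \eqref{eq3.36}; since $R=D^2(4p-s^2)$ with $D$ nonvanishing on $\T$, these are the same computation. Your treatment of the change of variable $t\mapsto\e^{it}$ is in fact slightly more explicit than the paper's; the one small point you glide over is the passage from ``$4-|s(\e^{it})|^2$ vanishes to order $2\nu$'' to ``$|s(\e^{it})|=2$ to order $2\nu$'', which follows since $2+|s(\e^{it})|$ is nonvanishing at $t_0$.
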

Here a {\em (real or complex-valued) function $f$ on a real interval $I$ is said  to take a value $y$ to order $m\geq 1$ at a point $t_0\in I$}
if $f \in C^m(I), \, f(t_0)=y, \,  f^{(j)}(t_0)=0$ for $j=1,\dots, m-1$ and $f^{(m)}(t_0)\neq 0$. Note that if  $y\neq 0$ then  $f^2(t_0)= y^2$ to order $m$ implies that $f(t_0)=y$ to order $m$. We say that {\em $f$ vanishes to order $m\geq 1$ at a point $t_0\in I$}
if $f$ take the value $0$ to order $m$ at $t_0$.\\
\begin{proof}
By Definition \ref{def3.30}, to say that $\tau\in\T$ is a royal node of $h$ of multiplicity $\nu$ means that 
\[
(s^2-4p)(\la) = (\la-\tau)^{2\nu} F(\la)
\]
for some rational function $F$ that is analytic on $\T$ and does not vanish at $\tau$.  

Since $h$ is $\Ga$-inner, $s=\bar sp$ on $\T$, and hence
\[
4-|s|^2 = \frac{1}{p}(4p -s\bar s p) = -\frac{1}{p}(s^2-4p)
\]
on $\T$.  It is immediate that, for any $\tau\in\T, \, |s(\tau)|=2$ if and only if $s(\tau)^2=4p(\tau)$, that is, if and only if $\tau$ is a royal node of $h$.

Now suppose that $\tau=\e^{it_0}$ is a royal node of $h$ of multiplicity $\nu\geq 1$.  On combining the last two displayed formulae one finds that, for all $t\in\R$,
\[
4-|s(\e^{it})|^2 = (e^{it} -\tau)^{2\nu} G(e^{it})
\]
where $G=-F/p$ is a rational function that is analytic on $\T$ and does not vanish at $\e^{it}=\tau$.  Since $h$ is rational and $|s(\e^{it_0})|=2$, the function $f(t)=4-|s(\e^{it})|^2$ is $C^\infty$ on a neighbourhood of $t_0$.  It is routine to show that $f^{(j)}(t_0)$ is zero for $j=0,\dots,2\nu-1$ and nonzero when $j=2\nu$.  Thus $f(t)=0$ to order $2\nu$ at $t_0$.  Hence $|s(\e^{it})|^2=4$ to order $2\nu$ at $t_0$, and so $|s(\e^{it})|=2$ to order $2\nu$ at $t_0$.
\end{proof}

\begin{lemma}\label{Rn0} Any $h = (s,p) \in \royal^{n,0}$ is not $s$-extreme.
\end{lemma}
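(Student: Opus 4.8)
We must show that if $h=(s,p)\in\royal^{n,0}$, then $h$ is not $s$-extreme; that is, $h$ can be written as a nontrivial convex combination $h=\tfrac12 h_1+\tfrac12 h_2$ of distinct rational $\Ga$-inner functions. Recall that $\royal^{n,0}$ means every royal node of $h$ lies in $\d$ (none lie on $\t$), so $k=0$. The plan is to exploit the $s$-convexity of the set of $\Ga$-inner functions with fixed $p$: since $p$ is common, it suffices to perturb $s$ to $s\pm g$ for a suitable rational function $g$, keeping both $(s+g,p)$ and $(s-g,p)$ rational $\Ga$-inner, and with $g\not\equiv 0$.

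**Constructing the perturbation.** Write $h$ via Proposition \ref{prop2.20}: $s=E/D$, $p=D^{\sim n}/D$ with $E$ $n$-symmetric, $D$ outer of degree $\le n$, and $|E|\le 2|D|$ on $\d^-$. The royal polynomial is $R=4DD^{\sim n}-E^2$, and by equation \eqref{eq3.36}, for $\la\in\T$,
\[
\la^{-n}R(\la)=4|D(\la)|^2-|E(\la)|^2.
\]
Since $h\in\royal^{n,0}$, $R$ has no zeros on $\t$, so $4|D|^2-|E|^2$ is strictly positive on $\t$; equivalently $|E(\la)|<2|D(\la)|$ on all of $\t$, hence (as $D$ is outer) $|E|<2|D|$ on $\d^-$ by the maximum principle applied to $E/D$, which has modulus $<2$ on $\t$ and is analytic on $\d^-$. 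Now I would take $g=\eps F/D$ where $F$ is any nonzero $n$-symmetric polynomial of degree $\le n$ (such $F$ exist by Remark \ref{dimE}, which says these form an $(n+1)$-dimensional real space), and $\eps>0$ a small real constant. Then $E\pm\eps F$ is again $n$-symmetric, and for $\eps$ small enough the strict inequality $|E|<2|D|$ on $\t$ is preserved: $|E\pm\eps F|\le |E|+\eps|F|<2|D|$ on $\t$ by a compactness/continuity argument since $\t$ is compact and $|E|-2|D|<0$ there. By the converse direction of Proposition \ref{prop2.20}, $h_\pm:=\bigl(\tfrac{E\pm\eps F}{D},\tfrac{D^{\sim n}}{D}\bigr)$ is then a rational $\Ga$-inner function of degree $\le n$. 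Since $h=\tfrac12 h_++\tfrac12 h_-$ and $h_+\ne h_-$ (as $F\ne 0$ forces $\eps F/D\not\equiv 0$), $h$ is not $s$-extreme.

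**The main obstacle.** The one delicate point is making sure the perturbed functions stay genuinely $\Ga$-inner — i.e. that we control $|E\pm\eps F|$ against $2|D|$ on the \emph{whole} closed disc $\d^-$, not merely on $\t$. This is exactly where the hypothesis $k=0$ is used: it guarantees the strict inequality $|E|<2|D|$ on $\t$ (if there were a royal node on $\t$, we would have equality there and the perturbation could push us outside $\Ga$). Given the strict inequality on the compact set $\t$, uniform smallness of $\eps$ suffices, and then $|(E\pm\eps F)/D|<2$ on $\t$ extends to $\d^-$ by the maximum modulus principle since $D$ is nonvanishing on $\d^-$ so $(E\pm\eps F)/D$ is analytic there. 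I should also remark that $h_+$ and $h_-$ need only be rational $\Ga$-inner functions (not necessarily of degree exactly $n$) for Definition \ref{def5.10} to apply, so no degree bookkeeping is needed; in fact the degree is preserved here since $D$ is unchanged, but this is not required.

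**Summary of steps.** (1) Represent $h$ by $(E,D)$ as in Proposition \ref{prop2.20}. (2) Use $h\in\royal^{n,0}$ and \eqref{eq3.36} to deduce $|E|<2|D|$ strictly on $\t$, hence on $\d^-$. (3) Pick a nonzero $n$-symmetric polynomial $F$ (Remark \ref{dimE}) and a small real $\eps>0$ so that $|E\pm\eps F|<2|D|$ on $\t$, hence on $\d^-$. (4) Apply the converse of Proposition \ref{prop2.20} to conclude $h_\pm=\bigl(\tfrac{E\pm\eps F}{D},p\bigr)$ are rational $\Ga$-inner. (5) Observe $h=\tfrac12 h_++\tfrac12 h_-$ with $h_+\ne h_-$, so $h$ is not $s$-extreme.
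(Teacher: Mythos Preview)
Your proof is correct and follows the same strategy as the paper's: the absence of royal nodes on $\T$ gives $|s|<2$ strictly on $\T$, which permits a small perturbation of $s$ while preserving the $\Ga$-inner property. The paper streamlines this by perturbing $s$ directly to $(1\pm\epsilon)s$ and invoking the characterization stated after Proposition~\ref{prop2.10} (namely, $h=(s,p)$ is rational $\Ga$-inner iff $p$ is inner, $|s|\le 2$ on $\D$, and $s=\bar s p$ on $\T$) rather than going through the polynomial representation of Proposition~\ref{prop2.20}; your version, using a general $n$-symmetric $F$, has the minor advantage of cleanly covering the degenerate case $s\equiv 0$, which the paper's choice $F=E$ tacitly excludes.
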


\begin{proof} As the royal nodes of $h$ all lie in $\d$, by Lemma \ref{lem5.10},  
$|s| < 2$ on $\t$. Hence, there exists $\epsilon > 0$ such that $|s+\epsilon s|<2$ on $\t$. It follows from Proposition \ref{prop2.10} that if we define $h_1$ and $h_2$ by $h_1 = (s+\epsilon s,p)$ and $h_2 = (s-\epsilon s,p)$, then $h_1$ and $h_2$ are rational $\gaminn$ functions. Furthermore, an application of Lemma \ref{lem5.10} to $h_1$ and $h_2$ shows that both $h_1,h_2 \in \royal^{n,0}$. Finally, since by construction, $h = \frac{1}{2} h_1 + \frac{1}{2} h_2$, the proof of Lemma \ref{Rn0} is complete.
\end{proof}

Recall from the introduction that a superficial $\Ga$-inner function $h$ is one such that $h(\D) \subset \partial \Ga$ and that they are all of the form $(\omega +\bar \omega p,p)$ for some inner function $p$ and some $\omega\in\T$ \cite[Proposition 8.3]{ALY12}.

\begin{proposition}\label{superficial}

{\rm (i)} Let $h=(s,p)$ be superficial 
and  $h =  t h_1 + (1-t) h_2$ for some $0 < t < 1$, where 
$h_1=(s_1,p_1)$ and $h_2=(s_2,p_2)$ are rational $\G$-inner functions.
Then $h_1$ and $h_2$ are superficial and $p = p_1= p_2$.
 
{\rm (ii)} Superficial $\gaminn$ functions are $s$-extreme.

{\rm (iii)} Suppose $h_1= (\omega_1 +\bar{\omega_1} p, p)$ and $h_2= (\omega_2 +\bar{\omega_2} p, p)$ are superficial $\gaminn$ functions of degree $n$ such that $\omega_1 \neq \omega_2$
and $h =  t h_1 + (1-t) h_2$ for some $0 < t < 1$. Then $h  \in \royal^{n,0}$ and is not $s$-extreme.
\end{proposition}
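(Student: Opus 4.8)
I would establish (i) first and deduce (ii) and (iii) from it, using the normal form $(\omega+\bar\omega p,p)$ for superficial functions. For (i): by Proposition~\ref{prop2.10}, a rational $\gaminn$ function $h=(s,p)$ is superficial precisely when $|s-\bar s p|=1-|p|^2$ at every point of $\D$ (the bound $|s|\le 2$ being automatic). From $h=th_1+(1-t)h_2$ and Lemma~\ref{extr_p} we get $p=p_1=p_2$, so at each $\la\in\D$
\[ s-\bar s p \;=\; t\,(s_1-\bar s_1 p)+(1-t)\,(s_2-\bar s_2 p). \]
Since $h_1,h_2$ are $\gaminn$, Proposition~\ref{prop2.10} bounds $|s_i-\bar s_i p|$ by $1-|p|^2$, while the left-hand side has modulus exactly $1-|p|^2$, which is strictly positive on $\D$ because $p$ is inner. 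Equality throughout the chain $1-|p|^2=|s-\bar s p|\le t|s_1-\bar s_1 p|+(1-t)|s_2-\bar s_2 p|\le 1-|p|^2$ then forces, via equality in the triangle inequality for nonzero complex numbers of equal modulus, $s_1-\bar s_1 p=s_2-\bar s_2 p=s-\bar s p$ and, in particular, $|s_i-\bar s_i p|=1-|p|^2$ on all of $\D$; hence $h_1(\D),h_2(\D)\subset\partial\Ga$, so $h_1$ and $h_2$ are superficial.

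For (ii): by (i), $h_1$ and $h_2$ are superficial with the same $p$, so $h_1=(\omega_1+\bar\omega_1 p,p)$, $h_2=(\omega_2+\bar\omega_2 p,p)$ and $h=(\omega+\bar\omega p,p)$ with $\omega,\omega_1,\omega_2\in\T$. Since $t\in\R$, the identity $s=ts_1+(1-t)s_2$ reads $\omega+\bar\omega p=a+\bar a p$ with $a=t\omega_1+(1-t)\omega_2$; as $p$ is nonconstant, $1$ and $p$ are $\C$-linearly independent, so $\omega=a=t\omega_1+(1-t)\omega_2$, and strict convexity of $\overline\D$ together with $0<t<1$ forces $\omega_1=\omega_2=\omega$, hence $h_1=h_2$. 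Thus $h$ is $s$-extreme.

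For (iii): here $p_1=p_2=p$, $h$ is $\gaminn$ by the $s$-convexity of the $\gaminn$ functions with fixed $p$ (Proposition~\ref{sconvsets}(3)), and $\deg h=\deg p=n$ by Proposition~\ref{degh}. As in (ii), $s=a+\bar a p$ with $a=t\omega_1+(1-t)\omega_2$; now $\omega_1\ne\omega_2$ and $0<t<1$ give $|a|<1$ by strict convexity of $\overline\D$, so on $\T$ we have $|s|=|a+\bar a p|\le 2|a|<2$. By Lemma~\ref{lem5.10}, $h$ has no royal node on $\T$; since $h$ has exactly $\deg h=n$ royal nodes counted with multiplicity (Theorem~\ref{prop3.10}), all of them lie in $\D$, so $h\in\royal^{n,0}$, and Lemma~\ref{Rn0} gives that $h$ is not $s$-extreme.

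The only genuinely delicate step is (i), where one must pass from the pointwise equality $1-|p|^2=|t(s_1-\bar s_1 p)+(1-t)(s_2-\bar s_2 p)|=t|s_1-\bar s_1 p|+(1-t)|s_2-\bar s_2 p|$, with both moduli on the right equal to $1-|p|^2>0$, to the coincidence of the complex numbers $s_i-\bar s_i p$, and must note that this holds at every $\la\in\D$ in order to conclude $h_i(\D)\subset\partial\Ga$. Once (i) is in hand, (ii) and (iii) amount to bookkeeping with the normal form $(\omega+\bar\omega p,p)$ and the royal-node count already established in Sections~\ref{sec3} and~\ref{convexity}.
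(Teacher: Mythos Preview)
Your proof is correct and follows essentially the same approach as the paper. For (i), the paper argues by contrapositive (if $h_1(\la_0)\in\G$ for some $\la_0$, then the triangle inequality gives $|s-\bar s p|<1-|p|^2$ at $\la_0$, contradicting superficiality of $h$), while you argue directly via equality in the triangle inequality; both hinge on the same computation with $s-\bar s p$. Parts (ii) and (iii) match the paper almost verbatim, though you supply the justification (linear independence of $1$ and $p$ for nonconstant $p$) for the step $\omega=t\omega_1+(1-t)\omega_2$ that the paper leaves implicit.
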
 
\begin{proof} (i) Suppose $h_1$ is not superficial. Then there exists $\la_0 \in \D$ such that $h_1(\la_0) \in \G$. Let us show that in this case $h(\la_0) \in \G$. By Lemma \ref{extr_p},  $p = p_1= p_2$.
By Proposition \ref{prop2.10}, it is enough to show that
\[
|s(\la_0)- \bar{s(\la_0)} p(\la_0)| <  1 - |p(\la_0)|^2. 
\]
Note that 
\[
|s(\la_0)- \bar{s(\la_0)} p(\la_0)| = t| (s_1(\la_0)- \bar{s_1(\la_0)} p(\la_0))| + (1-t)|(s_2(\la_0)- \bar{s_2(\la_0)} p(\la_0)) |
\]
\[
 <  1 - |p(\la_0)|^2. 
\]

(ii) By \cite[Proposition 8.3]{ALY12}, a superficial rational $\gaminn$ function $h=(\omega +\bar \omega p,p)$ for some inner function $p$ and some $\omega\in\T$. If $h$ is not  $s$-extreme, by Part(i), we have $h =  t h_1 + (1-t) h_2$ for some $t$ such that $0 < t < 1$, where 
$h_1=(s_1,p)$ and $h_2=(s_2,p)$ are superficial rational $\G$-inner functions. Let  $h_i=(\omega_i +\bar \omega_i p,p)$ for  some $\omega_i\in\T$, $i=1,2$. Thus
\[
h=(\omega +\bar \omega p,p)= (t \omega_1 + t \bar \omega_1 p +
(1-t)\omega_2 +(1-t)\bar \omega_2 p, p).
\]
Therefore, for $\omega \in \T$ and $\omega_i\in\T$, $i=1,2$,
\[
\omega= t \omega_1 + 
(1-t)\omega_2.
\]
Since every point of $\T$ is an extreme point of $\bar \D$ we have 
\[
\omega= \omega_1 =\omega_2
\]
and $h = h_1= h_2$. Hence $h$ is $s$-extreme.

(iii) Suppose $h_1= (\omega_1 +\bar{\omega_1} p, p)$ and $h_2= (\omega_2 +\bar{\omega_2} p, p)$ are superficial $\gaminn$ functions of degree $n$ such that $\omega_1 \neq \omega_2$
and $h =  t h_1 + (1-t) h_2$ for some $0 < t < 1$. Thus
\[
h = (\omega +\bar\omega p, p)
\]
where $\omega= t \omega_1 + (1-t)\omega_2$.
Since $\omega_1 \neq \omega_2$ we have $|\omega| <1$ and $h$ has no royal nodes on $\T$.
Therefore $h  \in \royal^{n,0}$ and, by Lemma \ref{Rn0}, $h$ is not $s$-extreme.
\end{proof}

\begin{proposition}\label{complex_geodesic} Let $h$ be the $\gaminn$ function
\[
h(\lambda) = (\beta +\bar{\beta} \lambda, \lambda)
\]
where $|\beta| \le 1$.\\
{\rm (i)} If  $|\beta| < 1$ then $h \in \royal^{1,0}$ and $h$ is not $s$-extreme.\\
{\rm (ii)} If  $|\beta| = 1$ then $h \in \royal^{1,1}$ and $h$ is  $s$-extreme.
\end{proposition}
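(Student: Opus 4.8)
The plan is to compute $s^2-4p$ and the quantity $s-\overline{s}\,p$ explicitly for $h(\lambda)=(\beta+\bar\beta\lambda,\lambda)$, read off the type of $h$ from these, and then invoke results already available: Lemma~\ref{Rn0} in the non-extreme case, and in the extreme case the observation that $h$ is then superficial together with Proposition~\ref{superficial}(ii). Since $p(\lambda)=\lambda$ is a Blaschke product of degree $1$, Proposition~\ref{degh} gives $\deg(h)=1$, so by Theorem~\ref{prop3.10} the royal nodes of $h$ carry total multiplicity exactly $1$; hence it suffices to locate the single royal node. Taking $E(\lambda)=\beta+\bar\beta\lambda$ and $D\equiv1$, one checks $E^{\sim1}=E$, $p=D^{\sim1}/D$, and $|E|\le2|\beta|\le 2=2|D|$ on $\d^-$, while on $\t$ one has $s-\overline{s}\,p=\beta(1-|\lambda|^2)=0$; so $h$ is genuinely $\gaminn$ by Proposition~\ref{prop2.10}, and its royal polynomial is $R(\lambda)=4\lambda-(\beta+\bar\beta\lambda)^2$.

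For part (i), suppose $|\beta|<1$. Then $|s(\lambda)|\le2|\beta|<2$ on $\d^-$, so by Lemma~\ref{lem5.10} $h$ has no royal node on $\t$; since the total royal-node multiplicity equals $\deg(h)=1$, the unique royal node lies in $\d$ and therefore $h\in\royal^{1,0}$. Lemma~\ref{Rn0} then yields that $h$ is not $s$-extreme. (Self-contained alternative: for $\beta\ne0$ and any $\eps\in(0,|\beta|^{-1}-1]$ the functions $\bigl((1\pm\eps)s,\lambda\bigr)$ are $\gaminn$, since $(1\pm\eps)s-\overline{(1\pm\eps)s}\,p=(1\pm\eps)\beta(1-|\lambda|^2)$ still satisfies the bound of Proposition~\ref{prop2.10}, and $h=\tfrac12\bigl((1+\eps)s,\lambda\bigr)+\tfrac12\bigl((1-\eps)s,\lambda\bigr)$; the case $\beta=0$ is covered directly by Lemma~\ref{Rn0}.)

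For part (ii), suppose $|\beta|=1$. Then on all of $\d^-$ one has $s(\lambda)-\overline{s(\lambda)}\,p(\lambda)=\beta(1-|\lambda|^2)$, hence $|s(\lambda)-\overline{s(\lambda)}\,p(\lambda)|=1-|\lambda|^2=1-|p(\lambda)|^2$ and $|s(\lambda)|\le2$; so by Proposition~\ref{prop2.10} $h(\d)\subset\partial\Ga$, i.e.\ $h$ is superficial (it has the form $(\omega+\bar\omega p,p)$ with $\omega=\beta$). Proposition~\ref{superficial}(ii) then gives immediately that $h$ is $s$-extreme. For the type, when $|\beta|=1$ one computes $s^2-4p=(\beta-\bar\beta\lambda)^2$, whose only zero in $\d^-$ is the unimodular point $\beta^2$, of order $2$; by Definition~\ref{def3.30} this is a royal node on $\t$ of multiplicity $\half\cdot2=1$, and since $\deg(h)=1$ there are no others, so $h\in\royal^{1,1}$.

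The computations here are all routine. The points needing a little care are: (a) the translation, via Definition~\ref{def3.30} and Proposition~\ref{prop3.5}, of the order of vanishing of $s^2-4p$ at $\beta^2\in\t$ into a royal-node multiplicity of $1$; and (b) spotting the identity $s-\overline{s}\,p=\beta(1-|\lambda|^2)$, which is what makes $h$ superficial when $|\beta|=1$ and so lets part (ii) be deduced cleanly from Proposition~\ref{superficial}(ii) rather than proved by a direct perturbation argument. I do not expect a genuine obstacle; were the superficiality route unavailable, part (ii) could instead be obtained by classifying via Proposition~\ref{prop2.20} all rational $\gaminn$ functions of degree $1$ with $p=\lambda$ (they are exactly the $(\beta'+\bar\beta'\lambda,\lambda)$ with $|\beta'|\le1$) and then using that each point of $\t$ is an extreme point of $\d^-$.
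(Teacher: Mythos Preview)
Your proof is correct and follows essentially the same approach as the paper: in part (i) you use $|s|\le 2|\beta|<2$ on $\t$ together with Lemma~\ref{lem5.10} to place $h$ in $\royal^{1,0}$ and then apply Lemma~\ref{Rn0}, and in part (ii) you identify $h$ as superficial and invoke Proposition~\ref{superficial}(ii), locating the unique royal node at $\beta^2\in\t$. Your write-up is more detailed than the paper's (you verify explicitly that $h$ is $\Ga$-inner, compute $s^2-4p=(\beta-\bar\beta\lambda)^2$ when $|\beta|=1$, and supply explicit alternative perturbations), but the logical route is the same.
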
 

Note that if $|\beta|<1$ then $h$ is a complex geodesic of $\G$ (it has the analytic left inverse $(s,p)\mapsto p$).
If $|\beta|=1$ then $h$ is superficial, and so is not a complex geodesic of $\G$.\\
\begin{proof}
(i) If $|\beta| < 1$ then, for all $\lambda \in \t$, $|s(\lambda)| \le 2|\beta| <2$. By Lemma \ref{lem5.10},
 $h$ has no royal node on $\t$ and so $h \in \royal^{1,0}$. By Lemma \ref{Rn0},  $h$ is not $s$-extreme.

(ii)  Let $|\beta|=1$.  Then $|s(\la)|=2$ if and only if $\la=\beta^2$.  Hence the royal node of $h$ is at $\beta^2 \in\T$.  Hence $h \in \royal^{1,1}$.  By Proposition \ref{superficial}(ii), $h$ is  $s$-extreme.
\end{proof}

For $p$ an inner function of degree $n$ and $k=0,1,\ldots,n$, let
\be\label{eq5.20}
\royal_p^{n,k}=\set{(s,p_1) \in \royal^{n,k}}{p_1=p}
\ee
and let $\royal_p^n$ be the set of $\Ga$-inner functions with second component $p$, so that
\be\label{eq5.30}
\royal_p^n=\bigcup_{k=0}^n \royal_p^{n,k}.
\ee

\begin{proposition}\label{prop5.10}
If $p$ is an inner function of degree $n$ then $\royal_p^n$ is convex.
Let $C$ be a collection of rational $\gaminn$ functions. $C$ is convex if and only if there exists an inner function $p$ such that $C$ is a convex subset of $\royal_p^n$.
\end{proposition}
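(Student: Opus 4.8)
The plan is to deduce the whole statement from Proposition~\ref{sconvsets}(3), Lemma~\ref{extr_p}, and the degree bookkeeping of Proposition~\ref{degh} and Theorem~\ref{prop3.10}. The preliminary observation to establish is that, for an inner function $p$ of degree $n$, the set $\royal_p^n$ is precisely the set of rational $\gaminn$ functions whose second coordinate is $p$. One inclusion is immediate from \eqref{eq5.30} and the definition of $\royal_p^{n,k}$. For the reverse inclusion, if $h=(s,p_1)$ is rational $\gaminn$ with $p_1=p$, then $\deg(h)=\deg(p)=n$ by Proposition~\ref{degh}; if $(n',k)$ denotes the type of $h$, then Theorem~\ref{prop3.10} gives $n'=\deg(h)=n$, while $0\le k\le n$ from Definition~\ref{def3.30}, so $h\in\royal^{n,k}$ and hence $h\in\royal_p^n$.

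Granting this, the first assertion is immediate: $\royal_p^n$ is the set of rational $\gaminn$ functions with fixed second coordinate $p$, which is convex by Proposition~\ref{sconvsets}(3), since a convex combination of two such functions $(s_1,p)$ and $(s_2,p)$ is again of the form $(s,p)$ with $s$ rational and is $\gaminn$ by the $s$-convexity of the $\gaminn$ functions.

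For the equivalence, the ``if'' direction is trivial, as a convex subset of $\royal_p^n$ is by definition convex. For the ``only if'' direction, let $C$ be a convex collection of rational $\gaminn$ functions. If $C$ has at most one element, take $p$ to be its second coordinate (or any inner function if $C=\emptyset$) and apply the preliminary observation. Otherwise, pick any $h_1=(s_1,p_1)$ and $h_2=(s_2,p_2)$ in $C$; since $C$ is convex, $\tfrac12 h_1+\tfrac12 h_2\in C$ and is therefore a rational $\gaminn$ function, so Lemma~\ref{extr_p} forces $p_1=p_2$. As $h_1$ and $h_2$ were arbitrary, all members of $C$ share a common second coordinate $p$, a finite Blaschke product of some degree $n$; by the preliminary observation $C\subseteq\royal_p^n$, and $C$ is convex, so $C$ is a convex subset of $\royal_p^n$.

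I do not anticipate a genuine obstacle: the mathematical substance is already packaged in Proposition~\ref{sconvsets}(3) and Lemma~\ref{extr_p}. The one step that requires a moment's care is the preliminary identification of $\royal_p^n$, where one must combine Proposition~\ref{degh} with Theorem~\ref{prop3.10} to see that fixing the second coordinate of a rational $\gaminn$ function automatically fixes its degree, and with it the first index of its type.
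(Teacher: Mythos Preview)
Your proof is correct and follows essentially the same route as the paper's, which simply cites Proposition~\ref{sconvsets} and Lemma~\ref{extr_p}. Your ``preliminary observation'' is in fact just the verification of the equality~\eqref{eq5.30}, which the paper states without proof immediately after \emph{defining} $\royal_p^n$ to be the set of $\Ga$-inner functions with second component $p$; so that part of your argument, while sound, is not needed to establish the proposition itself.
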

\begin{proof} It follows from Proposition \ref{sconvsets} and Lemma \ref{extr_p}.
\end{proof}

\begin{proposition}\label{prop5.20}
If $h$ is a rational $\gaminn$ function of degree $n$ then $h$ is a convex combination of at most $n+2$ $s$-extreme rational $\gaminn$ functions of degree at most $n$.
\end{proposition}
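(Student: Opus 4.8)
The plan is to fix the second coordinate $p$ of $h$, realise the set $\royal_p^n$ of rational $\gaminn$ functions with that second coordinate as a compact convex subset of a finite-dimensional real vector space, and then invoke the classical theorems of Minkowski and Carath\'eodory on convex hulls of extreme points. To begin, write $h=(s,p)$; by Proposition \ref{degh}, $\deg(p)=\deg(h)=n$. If $h=\sum_i t_i h_i$ is any convex combination, with all $t_i>0$, of rational $\gaminn$ functions $h_i=(s_i,p_i)$, then Lemma \ref{extr_p} forces $p_i=p$ for every $i$, so all the $h_i$ lie in $\royal_p^n$, which is convex by Proposition \ref{prop5.10}. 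It therefore suffices to exhibit $h$ as a convex combination of at most $n+2$ extreme points of $\royal_p^n$: for any such extreme point $h'$ is automatically $s$-extreme (a representation $h'=th''+(1-t)h'''$ by rational $\gaminn$ functions with $0<t<1$ has $h'',h'''\in\royal_p^n$ by Lemma \ref{extr_p}, hence $h''=h'''=h'$), and $h'$ has degree $\deg(p)=n$ by Proposition \ref{degh}.

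The second step is to coordinatise $\royal_p^n$. Fix a polynomial $D$ with $\deg(D)\le n$, $D(\la)\ne 0$ on $\d^-$, and $p=D^{\sim n}/D$; such a $D$ exists because $p$ is a finite Blaschke product of degree $n$. Let $V$ be the real vector space of $n$-symmetric polynomials, so $\dim V=n+1$ by Remark \ref{dimE}, and set
\[
K=\{E\in V:\ |E(\la)|\le 2|D(\la)|\ \text{for all}\ \la\in\T\}.
\]
Since $D$ has no zero on $\d^-$, each quotient $E/D$ with $E\in V$ is holomorphic on a neighbourhood of $\d^-$, so by the maximum principle $E\in K$ if and only if $|E|\le 2|D|$ on all of $\d^-$. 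The proof of the uniqueness clause of Proposition \ref{prop2.20} shows that the denominator in a representation (i)--(vi) of a rational $\gaminn$ function with second coordinate $p$ is a nonzero real multiple of our fixed $D$; hence every $(s',p)\in\royal_p^n$ can be written $s'=E/D$ with a unique, necessarily $n$-symmetric, polynomial $E$, and $\Phi\colon(s',p)\mapsto E$ is a well-defined injection of $\royal_p^n$ into $V$. Its image is exactly $K$: membership in $K$ is forced by $|s'|\le 2$ on $\d^-$, and conversely, given $E\in K$, the converse part of Proposition \ref{prop2.20} applies to $(E,D)$ and produces $(E/D,D^{\sim n}/D)\in\royal_p^n$. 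Finally $\Phi$ is affine, since $s'=E/D$ is linear in $E$; $K$ is closed and convex by inspection; and $K$ is bounded, because $\sup_{\T}|E|\le 2\sup_{\T}|D|$ controls all coefficients of $E$, all norms on the finite-dimensional space $V$ being equivalent. Thus $\Phi$ is an affine isomorphism of $\royal_p^n$ onto the compact convex set $K$.

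The conclusion now follows from finite-dimensional convex geometry. Since $K$ is a compact convex subset of the $(n+1)$-dimensional space $V$, Minkowski's theorem shows that $K$ is the convex hull of its set of extreme points, and Carath\'eodory's theorem then expresses $\Phi(h)$ as a convex combination of at most $(n+1)+1=n+2$ extreme points of $K$. Transporting this through the affine isomorphism $\Phi^{-1}$ writes $h$ as a convex combination of at most $n+2$ extreme points of $\royal_p^n$, each of which, by the first step, is an $s$-extreme rational $\gaminn$ function of degree $n$, hence of degree at most $n$. This proves the proposition.

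I expect the only genuine work to lie in the coordinatisation step --- verifying that $\Phi$ is well defined (uniqueness of $E$ once $D$ is chosen), that it is affine, and that it is surjective onto all of $K$ rather than merely into it. The remaining ingredients are the cited Lemma \ref{extr_p}, Propositions \ref{prop2.20}, \ref{degh} and \ref{prop5.10}, Remark \ref{dimE}, and the classical theorems of Minkowski and Carath\'eodory in $\R^{n+1}$.
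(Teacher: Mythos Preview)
Your proof is correct and follows essentially the same approach as the paper: identify $\royal_p^n$ with a compact convex subset of the $(n+1)$-dimensional real space of $n$-symmetric polynomials (Remark \ref{dimE}) and apply Carath\'eodory's theorem. You supply details the paper leaves implicit --- the compactness of $K$ and the appeal to Minkowski's theorem to reach the extreme points --- but the underlying strategy is identical.
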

\begin{proof} For the given rational $\gaminn$ function $h=(s,p)$ 
of degree $n$, $p$ is an inner function of degree $n$ and $h \in \royal_p^n$. By Remark \ref{dimE}, the convex  set $\royal_p^n$ is a subset of an  ($n+1$)-dimensional real subspace of the rational functions. Thus, by a theorem of Carath$\acute{\rm e}$odory \cite{Ca,St}, $h$ is a convex combination of at most $n+2$ $s$-extreme rational $\gaminn$ functions of degree at most $n$.
\end{proof}
\begin{lemma}\label{s-convex-nodes} Let $h=(s,p) \in \royal_p^{n,k}$
and let $\tau_1, \tau_2,  \dots, \tau_k \in \T$ be royal nodes of $h$. Suppose $h =  t h_1 + (1-t) h_2$ for some $t$ such that $0 < t < 1$, where 
$h_1=(s_1,p_1)$ and $h_2=(s_2,p_2)$ are rational $\G$-inner functions.
Then  $p = p_1= p_2$ and
\[ 
s_i(\tau_j) = s(\tau_j) \; \; \text{for } \;  j =1,\dots k\; \text{and} \; i=1,2.
\]
\end{lemma}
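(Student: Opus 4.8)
The plan is to reduce everything to the behaviour of the functions $s, s_1, s_2$ on the circle $\T$ at the points $\tau_j$, using the $\Gamma$-inner relation $|s|\le 2$ on $\D^-$ and Lemma \ref{lem5.10}, which characterises royal nodes on $\T$ as exactly the points where $|s|$ attains the value $2$. First I would invoke Lemma \ref{extr_p} directly: since $h = t h_1 + (1-t) h_2$ with $0<t<1$, it gives at once $p = p_1 = p_2$, so all three functions have the same (inner) second component. That disposes of the first assertion and lets me write $h = (s,p)$, $h_1 = (s_1,p)$, $h_2 = (s_2,p)$ with $s = t s_1 + (1-t) s_2$ identically on $\D$, hence, taking radial limits, almost everywhere — and in fact everywhere, by continuity — on $\T$.

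The core of the argument is a convexity/extreme-point observation applied pointwise on the fibre. Fix a royal node $\tau_j \in \T$ of $h$. By Lemma \ref{lem5.10}, $|s(\tau_j)| = 2$. On the other hand, since $h_1$ and $h_2$ are $\Gamma$-inner and hence (by Proposition \ref{prop2.10}) satisfy $|s_i| \le 2$ on $\D^-$, we have $|s_1(\tau_j)| \le 2$ and $|s_2(\tau_j)| \le 2$; that is, $s_1(\tau_j)$ and $s_2(\tau_j)$ both lie in the closed disc $2\bar{\D}$. The identity $s(\tau_j) = t\, s_1(\tau_j) + (1-t)\, s_2(\tau_j)$ then exhibits the boundary point $s(\tau_j)$ of $2\bar{\D}$ as a strict convex combination of two points of $2\bar{\D}$. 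Since every point of the circle $2\T$ is an extreme point of the disc $2\bar{\D}$, this forces $s_1(\tau_j) = s_2(\tau_j) = s(\tau_j)$. Running this over $j = 1,\dots,k$ gives the second assertion $s_i(\tau_j) = s(\tau_j)$ for all $j$ and $i=1,2$.

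A small point of care is that the identity $s = t s_1 + (1-t) s_2$ a priori holds on $\D$ and hence, via radial limits (Fatou), almost everywhere on $\T$; to evaluate it at the specific points $\tau_j$ I should note that each $s_i$ is rational and, being the first component of a $\Gamma$-inner function whose royal polynomial has no zero at $\tau_j$ only when $\tau_j$ is not a royal node of $h_i$ — but in any case $s_i$ is rational with no pole on $\D^-$ by Proposition \ref{prop2.20}(iii),(v), so $s_i$ is continuous on $\D^-$ and the identity extends by continuity to all of $\T$, in particular to $\tau_j$. I expect the only genuine obstacle to be this continuity/evaluation bookkeeping; the convexity step itself is immediate once one knows $|s_i(\tau_j)| \le 2$ for the summands and $|s(\tau_j)| = 2$ for $h$, which is exactly what Lemma \ref{lem5.10} supplies.
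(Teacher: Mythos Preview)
Your proposal is correct and follows essentially the same route as the paper: invoke Lemma~\ref{extr_p} to get $p=p_1=p_2$, use Lemma~\ref{lem5.10} to obtain $|s(\tau_j)|=2$, and then apply the extreme-point property of $2\T$ in $2\bar\D$ to the convex combination $s(\tau_j)=t\,s_1(\tau_j)+(1-t)\,s_2(\tau_j)$ with $|s_i(\tau_j)|\le 2$. Your extra remark on continuity at $\tau_j$ (via Proposition~\ref{prop2.20}(iii),(v)) is a harmless bit of bookkeeping that the paper leaves implicit.
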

\begin{proof} By Lemma \ref{extr_p},  $p = p_1= p_2$. By Lemma \ref{lem5.10}, $|s(\tau_j)| =2$ at each royal node $\tau_j \in \T$.
Note that $s(\tau_j) =  t s_1(\tau_j) + (1-t) s_2(\tau_j)$ for some  $t$ such that  $0 < t < 1$  and $|s_i(\tau_j)| \le 2$  for $ j =1,\dots,k$ and $i=1,2$.
Since every point on the circle $2 \T$ is an extreme point of $2 \bar{\d}$ we have $|s_i(\tau_j)| = 2$  and
$ s_i(\tau_j) = s(\tau_j)$ for  $ j=1,\dots, k$ and $i=1,2$.
\end{proof}

The following observation follows easily from a consideration of Taylor expansions.
\begin{lemma}\label{vanishes}
Let $t_0< t_1$ in $\R$, let $\nu\geq 1$ be an integer and let $f,g$ be nonnegative real-valued $C^{2\nu}$ functions on $[t_0,t_1)$.  If $f$ vanishes to order $2\nu$ at $t_0$ and $0\leq g\leq f$ on $[t_0,t_1)$ then $g$ vanishes to order at least $2\nu$ at $t_0$.
\end{lemma}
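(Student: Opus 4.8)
The plan is to prove Lemma \ref{vanishes} by a direct Taylor-expansion argument at the point $t_0$. Since $f$ is $C^{2\nu}$ and vanishes to order $2\nu$ at $t_0$, Taylor's theorem with the integral (or Peano) form of the remainder gives
\[
f(t) = \frac{f^{(2\nu)}(t_0)}{(2\nu)!}(t-t_0)^{2\nu} + o\big((t-t_0)^{2\nu}\big)
\]
as $t\to t_0+$, and nonnegativity of $f$ near $t_0$ forces $c:=f^{(2\nu)}(t_0)/(2\nu)! \ge 0$; in fact $c>0$ since $f$ vanishes to \emph{exactly} order $2\nu$. Hence $f(t) \le 2c(t-t_0)^{2\nu}$ on a right neighbourhood $[t_0,t_0+\delta)$, and therefore $0\le g(t)\le f(t)\le 2c(t-t_0)^{2\nu}$ there.

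First I would record the elementary consequence that if a $C^m$ function $g$ satisfies $0\le g(t) \le M(t-t_0)^m$ on $[t_0,t_0+\delta)$, then $g^{(j)}(t_0)=0$ for $j=0,1,\dots,m-1$. This is exactly the statement that $g$ vanishes to order at least $m$ (in the sense defined just before the proof of Lemma \ref{lem5.10}), once we also note that if some derivative up to order $m$ fails to vanish then $g$ vanishes to some exact order $\le m$, which is the assertion we want; and if all derivatives through order $m$ vanish, $g$ still vanishes to order at least $m$ by definition. To get the vanishing of the derivatives, I would argue inductively: $g(t_0)=0$ is immediate from the bound; and if $g(t_0)=g'(t_0)=\dots=g^{(j-1)}(t_0)=0$ with $j\le m-1$, then Taylor's theorem gives $g(t) = \frac{g^{(j)}(t_0)}{j!}(t-t_0)^j + o((t-t_0)^j)$, so $g(t)/(t-t_0)^j \to g^{(j)}(t_0)/j!$; but the bound $0\le g(t)\le M(t-t_0)^m$ forces $g(t)/(t-t_0)^j \to 0$ since $j<m$, whence $g^{(j)}(t_0)=0$. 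Applying this with $m=2\nu$ completes the proof.

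The main obstacle — really the only subtlety — is the phrase ``vanishes to order at least $2\nu$'' in the conclusion, because the definition of ``takes a value to order $m$'' as stated requires the $m$-th derivative to be \emph{nonzero}; so ``order at least $2\nu$'' should be read as: either $g$ vanishes to exact order $m$ for some $m\ge 2\nu$, or $g^{(j)}(t_0)=0$ for all $j\le 2\nu$ (and possibly $g$ is not $C^\infty$, so no exact order is defined). The argument above shows precisely that $g^{(j)}(t_0)=0$ for $j=0,\dots,2\nu-1$, which is what is needed wherever this lemma is invoked (namely to compare orders of vanishing of $4-|s_i(\e^{it})|^2$ against $4-|s(\e^{it})|^2$ via Lemma \ref{lem5.10}). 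I would phrase the conclusion carefully in those terms rather than fuss over whether $g\in C^{2\nu+1}$. Everything else is routine: Taylor's theorem with Peano remainder, the sign constraint from nonnegativity, and a one-line induction.
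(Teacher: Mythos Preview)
Your argument is correct and is precisely the ``consideration of Taylor expansions'' that the paper alludes to without giving details; the paper omits the proof entirely, so your write-up is in fact more complete than the original. Your care about the phrase ``order at least $2\nu$'' is well placed and is exactly what the application in Lemma~\ref{tricky} requires.
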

\begin{lemma}\label{tricky}
If $h,h_1$ and $h_2$ are rational $\Ga$-inner functions, $h$ is a convex combination of $h_1$ and $h_2$ and $\tau\in\T$ is a royal node of $h$ of multiplicity $\nu>0$ then $\tau$ is also a royal node of multiplicity at least $\nu$ for $h_1$ and $h_2$. 
\end{lemma}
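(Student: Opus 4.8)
The plan is to translate the statement about royal‑node multiplicities into one about orders of vanishing of real functions on $\T$ (via Lemma \ref{lem5.10}), then to establish a pointwise domination on $\T$ of the function attached to $h_1$ (and to $h_2$) by the one attached to $h$, and finally to quote Lemma \ref{vanishes}.

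Write $h=th_1+(1-t)h_2$ with $0<t<1$, $h=(s,p)$ and $h_i=(s_i,p_i)$; by Lemma \ref{extr_p}, $p=p_1=p_2$. Put $\tau=\e^{it_0}$ and, for $t\in\R$,
$$f(t)=4-|s(\e^{it})|^2,\qquad g_i(t)=4-|s_i(\e^{it})|^2\quad(i=1,2).$$
Since $s,s_1,s_2$ are rational with no pole on $\d^-$ (Proposition \ref{prop2.20}), these are real‑analytic on $\R$, and they are nonnegative there because $|s|,|s_1|,|s_2|\le 2$ on $\d^-$ by Proposition \ref{prop2.10}. By Lemma \ref{lem5.10}, the hypothesis that $\tau$ is a royal node of $h$ of multiplicity $\nu$ says exactly that $f$ vanishes to order $2\nu$ at $t_0$; conversely, it suffices to show that $g_1$ and $g_2$ each vanish to order at least $2\nu$ at $t_0$. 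Indeed, a nonnegative real‑analytic $g_i$ vanishing to order at least $2\nu$ at $t_0$ then vanishes to a finite even order $2\mu_i\ge 2\nu$ there (unless $g_i\equiv 0$ near $t_0$, treated below), and Lemma \ref{lem5.10} applied to $h_i$ then says that $\tau$ is a royal node of $h_i$ of multiplicity $\mu_i\ge\nu$.

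For the domination: since $s=ts_1+(1-t)s_2$, the triangle inequality and convexity of $x\mapsto x^2$ on $[0,\infty)$ give, at every point of $\T$,
$$|s|^2\le\bigl(t|s_1|+(1-t)|s_2|\bigr)^2\le t|s_1|^2+(1-t)|s_2|^2,$$
hence $f\ge t g_1+(1-t)g_2\ge t g_1\ge 0$ on $\T$, using $g_2\ge 0$. (With $p_1=p_2=p$ and $\overline{s_i}=s_i/p$ on $\T$ one even gets the exact identity $f=tg_1+(1-t)g_2+t(1-t)|s_1-s_2|^2$ on $\T$, but the inequality is all that is needed.) Thus $0\le g_1\le f/t$ on $\T$, and $f/t$ vanishes to order $2\nu$ at $t_0$; applying Lemma \ref{vanishes} on a half‑neighbourhood $[t_0,t_1)$ of $t_0$ to the pair $(f/t,\,g_1)$ shows that $g_1$ vanishes to order at least $2\nu$ at $t_0$, and the same argument works for $g_2$. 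Finally, if $g_1\equiv 0$ near $t_0$ then, by real‑analyticity, $|s_1|\equiv 2$ on $\T$, so $s_1^2=|s_1|^2p_1=4p_1$ on $\T$ and hence identically; that is, $h_1(\d^-)\subset\royal\cap\Ga$, the royal polynomial of $h_1$ vanishes identically, and the conclusion holds in the evident degenerate sense (and such $h_1$ may simply be excluded). The only point requiring a little care is the chain $|s|^2\le t|s_1|^2+(1-t)|s_2|^2$ on $\T$ together with the bookkeeping that matches orders of vanishing to royal‑node multiplicities; with Lemmas \ref{lem5.10} and \ref{vanishes} already available, I anticipate no real obstacle.
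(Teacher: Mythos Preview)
Your argument is correct and is in fact cleaner than the paper's. Both proofs pass through Lemmas \ref{lem5.10} and \ref{vanishes}, but the key intermediate step is different. The paper introduces a real-analytic chart $\al=(X,\Theta)$ on the M\"obius band $b\Ga$ so that the convex combination becomes an \emph{equality} $f=\tfrac12 f_1+\tfrac12 f_2$ with $f_j=1-\tfrac12|s_j|$; from a mere equality of this type one cannot bound $f_j$ by $f$, so the paper then argues that on a sufficiently small one-sided neighbourhood of $t_0$ one of $f_1,f_2$ is sandwiched between $0$ and $f$ (using that $s,s_1,s_2$ are distinct rational functions and hence agree only finitely often), and applies Lemma \ref{vanishes} to that one, deducing the result for the other from $f_2=2f-f_1$. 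You bypass the chart and the sandwich entirely: the convexity inequality $|s|^2\le t|s_1|^2+(1-t)|s_2|^2$ (or, equivalently, your exact identity $f=tg_1+(1-t)g_2+t(1-t)|s_1-s_2|^2$, which holds on $\T$ because $\overline{s_i}=s_i\bar p$ there) immediately gives the \emph{global} two-sided domination $0\le g_i\le f/t$ (resp.\ $f/(1-t)$) on all of $\T$, for \emph{both} $i=1,2$, and for arbitrary $t\in(0,1)$ rather than just $t=\tfrac12$. Lemma \ref{vanishes} then applies without any further case analysis. Your approach is shorter and avoids the need to know that the three curves meet only finitely often; the paper's approach, on the other hand, offers a nice geometric picture of curves touching the edge of the M\"obius band.

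Two minor presentational points: you use the symbol $t$ both for the convex parameter and for the angular variable in $f(t),g_i(t)$, which is jarring; and the degenerate case $g_i\equiv 0$ (i.e.\ $h_i(\d^-)\subset\royal\cap\Ga$) is, as you note, outside the scope of Definition \ref{def3.30}, but the paper's own proof and its sole application in Theorem \ref{2k>nbis} are equally silent on this, so no more is required of you.
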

\begin{proof}
Let $h=(s,p), \, h_1=(s_1,p_1), \, h_2=(s_2,p_2)$.  Suppose that $h=\half h_1+\half h_2$.  By Lemma \ref{extr_p}, $p_1=p=p_2$, \, $s=\half s_1+\half s_2$ and we can assume that $s, s_1$ and $s_2$ are distinct rational functions. 

 The argument can be pictured as follows.  Imagine the closed curve $h(\e^{it}), 0\leq t\leq 2\pi$, lying in the M\"obius band $\M$.  It touches the boundary $\{(2\omega,\omega^2):\omega\in\T\}$ of $\M$ at the points where $\tau = \e^{it_0}$ is a royal node of $h$ lying in $\T$, and by Lemma \ref{lem5.10}, it touches to order $2\nu$, where $\nu$ is the multiplicity of the royal node in question.  For each $t$, the three points $h(\e^{it}), h_1(\e^{it})$ and $h_2(\e^{it})$ lie on the line segment
\[
L_\theta= \{(2x \e^{i\theta/2}, \e^{i\theta}):-1 \leq x\leq 1\} \quad\subset\quad \M
\]
where $p(\e^{it})=\e^{i\theta}$, and furthermore the first of these three points is the midpoint of the other two. The three curves intersect only finitely many times, for otherwise they coincide.  Hence, for $t$ in a small enough one-sided neighbourhood of $t_0$, one of the curves, say $h_1(\e^{it})$, is sandwiched between $h(\e^{it})$ and the appropriate endpoint $(\pm 2 \e^{i\theta/2}, \e^{i\theta})$ of $L_\theta$.  We shall show with the aid of Lemma \ref{vanishes} that $h_1(\e^{it})$ and $h_2(\e^{it})=(2h-  h_1)(\e^{it})$ also touch the boundary to order at least $2\nu$.   Hence $h_1, \, h_2$ have royal nodes at $\tau$, and (again by Lemma \ref{lem5.10}) with  multiplicities at least $\nu$.

Let us formalise this geometric argument.
Suppose that $\tau= \e^{it_0}$ and $p(\tau)=\e^{i\theta_0}$.  Let $I$ be an open interval in $\R$ containing $t_0$ and such that $\exp(iI)$ has length less than $2\pi$; thus there is an analytic branch of $\log$ on the arc $\exp(iI)$.   Define a chart $(U, \al)$ in $\M$  by taking $U$ to be the set $\M \cap (\C\times \exp(iI))$ and 
\[
\al=(X,\Theta):U\to \R^2
\]
 to be defined by 
\[
\al(s,\e^{i\theta}) =(\half s \e^{-i\theta/2}, \theta)
\]
where of course the map $\e^{i\theta}\mapsto \theta$ is $-i\log$.  Observe four properties of $\al$:
\begin{enumerate}
\item the image $\al(U)$ is the rectangle $[-1,1]\times I$;
\item $\al$ is real affine linear on every line segment $L_\theta$;
\item $X$ is real-valued on $U$ and so, for $(s,\e^{i\theta})\in U$,
\[
X(s,\e^{i\theta}) =\pm |X(s,\e^{i\theta})|=\pm|\half s \e^{-i\theta/2}|= \pm \half |s|.
\]
\item 
\[
\al(s,p) =\left(\frac{s}{2 \sqrt{p}}, -i \log p \right)
\]
is the restriction to $U$ of an analytic map on an open set in $\C^2$. Thus $X$ is real-analytic in $U$.
\end{enumerate}

The point $h(\tau)$ lies the boundary of $b\Gamma$ and is therefore of the form $(2\omega,\omega^2)$ for some $\omega\in\T$.  Here $\omega^2=p(\tau)=\e^{i\theta_0}$ and so $\omega=\pm \e^{i\theta_0/2}$.
If $I$ is replaced by $I+2\pi$ then $U$ is unchanged
and the sign of the first component of $\al$ is reversed; we may therefore assume that $\omega=\e^{i\theta_0/2}$ and $\al\circ h(\tau)=(1,\theta_0)$.  On replacement of $I$ by a smaller neighbourhood of $t_0$ if necessary, we can also assume that $X\circ h(\e^{it}) >0$  and so $X\circ h(\e^{it}) = \half |s(\e^{it})|$ for $t\in I$.  Similarly we can assume that 
$X\circ h_j(\e^{it}) = \half |s_j(\e^{it})|$ for $t\in I$ and $j=1,2$.
Let 
\[
f(t)= 1-X\circ h(\e^{it})= 1-\half |s(\e^{it})|
\]
for $t\in I$.  Likewise let $f_j(t)= 1-X\circ h_j(\e^{it})= 1-\half |s_j(\e^{it})|$ for $j=1,2$.  Then $f,f_1$ and $f_2$ are all nonnegative on $I$ and, by the affine linearity property of $\al$,
\[
f= \half f_1+\half f_2.
\]
By hypothesis $\tau$ is a royal node of $h$ of multiplicity $\nu$, and so, by Lemma \ref{lem5.10},
$|s(\e^{it})|=2$ to order $2\nu$ at $t_0$, which is to say that $f$ vanishes to order $2\nu$ at $t_0$.  Now
$f,f_1,f_2$ are distinct at all but finitely many points.  Hence there is a  $t_1>t_0$ contained in $I$ such that
$0\leq f_j(t)\leq f(t)$ for $t_0<t<t_1$ and $j=1$ or $2$ -- say $j=1$.  By Lemma \ref{vanishes}, it follows that $f_1$ and also $f_2=2f-f_1$ vanish to order  at least  $2\nu$ at $t_0$.   Consequently $|s_j(\e^{it})|=2$ to order at least $2\nu$ at $t_0$.  Again by Lemma \ref{lem5.10}, $h_j$ has a royal node of multiplicity at least $\nu$ at $\tau$.
\end{proof}
We are ready to prove Theorem \ref{2k>n}.  Recall the statement:
\begin{theorem}\label{2k>nbis}
A $\Ga$-inner function of degree $n$ having $k$ royal nodes in $\T$, counted with multiplicity, is $s$-extreme if and only if $2k>n$.
\end{theorem}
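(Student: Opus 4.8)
The plan is to reduce the assertion to a statement about extreme points of an explicit finite-dimensional convex set of polynomials, and then to read it off from the order of vanishing of the royal polynomial at the royal nodes on $\t$. Throughout, $h=(s,p)$ is nonconstant and we assume $h(\d)\not\subseteq\royal$, so that the royal polynomial $R$ is not identically $0$ and $k$ is well defined; the omitted functions are exactly those of the form $(2\ph,\ph^2)$ with $\ph$ a finite Blaschke product, for which $|E|=2|D|$ on $\t$ identically and which are consequently $s$-extreme. By Lemma \ref{extr_p}, any decomposition $h=t h_1+(1-t)h_2$ of a rational $\gaminn$ function into rational $\gaminn$ functions has $p_1=p_2=p$, so $h$ is $s$-extreme if and only if it is an extreme point of $\royal_p^n$, the convex (Proposition \ref{prop5.10}) set of rational $\gaminn$ functions with second component $p$, where $n=\deg(h)=\deg(p)$ (Proposition \ref{degh}, Theorem \ref{prop3.10}). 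Fix an outer polynomial $D$ with $p=D^{\sim n}/D$; since $\deg(p)=n$, $D$ has no zero on $\d^-$. By Proposition \ref{prop2.20}, sending $h$ to the unique $n$-symmetric polynomial $E$ with $s=E/D$ is an affine bijection of $\royal_p^n$ onto $K$, where
\[
K=\set{E}{E\ n\text{-symmetric and}\ |E(\la)|\le 2|D(\la)|\ \text{for all}\ \la\in\t}
\]
is a convex subset of the $(n+1)$-dimensional real vector space of $n$-symmetric polynomials (Remark \ref{dimE}). So it suffices to prove that $E$ is an extreme point of $K$ if and only if $2k>n$, where $\tau_1,\dots,\tau_m\in\t$ are the royal nodes of $h$ --- equivalently, by Proposition \ref{prop3.5}, the zeros of $R=4DD^{\sim n}-E^2$ on $\t$ --- with $\ord_{\{\tau_j\}}(R)=2\nu_j$ and $k=\sum_{j}\nu_j$.

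\emph{If $E$ is not extreme in $K$, then $2k\le n$.} Since $E$ is not extreme there is a nonzero $n$-symmetric polynomial $G$ with $E\pm G\in K$; let $h_\pm$ be the rational $\gaminn$ functions corresponding to $E\pm G$, so that $h=\half h_++\half h_-$. By Lemma \ref{tricky}, each $\tau_j$ is a royal node of $h_+$ and of $h_-$ of multiplicity at least $\nu_j$, so $R_\pm:=4DD^{\sim n}-(E\pm G)^2$ vanishes to order at least $2\nu_j$ at $\tau_j$ (vacuously if some $h_\pm$ lies in $\royal$, in which case the corresponding $R_\pm$ is identically $0$). Since $R$ vanishes to order exactly $2\nu_j$ at $\tau_j$, the identities $R-R_\pm=(E\pm G)^2-E^2$ show that $2EG+G^2$ and $-2EG+G^2$ each vanish to order at least $2\nu_j$ at $\tau_j$; subtracting, $4EG$ vanishes to order at least $2\nu_j$ at $\tau_j$. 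By Lemma \ref{lem5.10}, $|s(\tau_j)|=2$, so $|E(\tau_j)|=2|D(\tau_j)|\ne 0$, and therefore $G$ vanishes to order at least $2\nu_j$ at $\tau_j$ for every $j$. Hence the nonzero polynomial $G$, of degree at most $n$, has at least $\sum_{j}2\nu_j=2k$ zeros in $\c$ counted with multiplicity, so $2k\le n$.

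\emph{If $2k\le n$, then $E$ is not extreme in $K$.} Using the factorisations in Lemmas \ref{lem4.10} and \ref{lem4.20}, I would build a nonzero $n$-symmetric polynomial $G$ with $\ord_{\{\tau_j\}}(G)\ge 2\nu_j$ for every $j$, of the form $G=\la^a\prod_{j}Q_{\tau_j}^{\nu_j}$ when $n-2k$ is even and $G=\la^a L_\xi\prod_{j}Q_{\tau_j}^{\nu_j}$ for some $\xi\in\t\setminus\{\tau_1,\dots,\tau_m\}$ when $n-2k$ is odd, where $a\ge 0$ is forced by requiring $G$ to be $n$-symmetric of degree at most $n$; such a $G$ exists precisely because $2k\le n$. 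One then checks that $E\pm\eps G\in K$ for all sufficiently small $\eps>0$: the defining condition is $\la^{-n}(R\mp 2\eps EG-\eps^2 G^2)\ge 0$ on $\t$, which holds because $\la^{-n}R\ge 0$ on $\t$ with zeros exactly at the $\tau_j$ of order $2\nu_j$, near each $\tau_j$ both $EG$ and $G^2$ are $O(|\la-\tau_j|^{2\nu_j})$ with a coefficient rendered negligible by the prefactor $\eps$ (respectively $\eps^2$), and $\la^{-n}R$ is bounded away from $0$ off small neighbourhoods of the $\tau_j$. Then $h=\half h_++\half h_-$ with $h_\pm=\bigl((E\pm\eps G)/D,\,p\bigr)$ distinct rational $\gaminn$ functions, so $h$ is not $s$-extreme.

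\emph{Main obstacle.} The one substantive ingredient is Lemma \ref{tricky}, which does all the work in the first half; the affine identification $\royal_p^n\cong K$ is routine from Proposition \ref{prop2.20}, and the second half reduces to the short positivity estimate near the royal nodes indicated above. The only point requiring care is bookkeeping around the royal variety --- the possibility that $h$, or one of the $h_\pm$, lies in $\royal$ --- which is absorbed into the convention that the zero polynomial vanishes to every order.
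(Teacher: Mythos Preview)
Your proof is correct and follows essentially the same approach as the paper: the $(\Leftarrow)$ direction in both arguments rests on Lemma \ref{tricky} together with the identity $R_--R_+=4EG$ to force $\deg G\ge 2k$, and the $(\Rightarrow)$ direction in both constructs an explicit $n$-symmetric perturbation $G$ divisible by $\prod_j(\la-\tau_j)^{2\nu_j}$ (padded by a power of $\la$ and, in the odd case, a single $L_\xi$ factor) and checks the positivity of $\la^{-n}(R\mp 2\eps EG-\eps^2G^2)$ on $\t$ by factoring out $\prod_j|\la-\tau_j|^{2\nu_j}$. Your affine identification $\royal_p^n\cong K$ and your choice $\xi\notin\{\tau_j\}$ (versus the paper's $\xi=\tau_1$) are cosmetic differences only.
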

In other words, $h\in\mathcal{R}^{n,k}$  is $s$-extreme if and only if $2k > n$.\\
\begin{proof}
($\Rightarrow$)  Let $h\in \mathcal{R}^{n,k}$.  Suppose that $2k \leq n$; we must show that $h$ is not $s$-extreme.
Write $h$ in polynomial form: $h=(E,D^{\sim n})/D$ where $E$ is an $n$-symmetric polynomial and $D$ is a polynomial of degree at most $n$ having no zeros in $\D^-$.  Let the royal nodes of $h$ in $\D^-$ be $\tau_1, \dots,\tau_k \in\T$ and
$\al_{k+1},\dots,\al_n \in \D$ (with repetitions according to multiplicity).  The royal polynomial of $h$ is then
\[
R= \prod_{j=1}^k Q_{\tau_j}\prod_{j=k+1}^n Q_{\al_j},
\]
and consequently
\[
\la^{-n} R(\la) = \prod_{j=1}^k |\la-\tau_j|^2 \prod_{j=k+1}^n |\la- \al_j|^2
\]
for all $\la \in\T$.
By Theorem \ref{thm4.10},
\beq\label{4DEt0}
4|D|^2-|E|^2 = r \prod_{j=1}^k |\la-\tau_j|^2 \prod_{j=k+1}^n |\la- \al_j|^2
\eeq
for some $r>0$ and all $\la\in\T$.

Assume first that $n$ is even, say $n=2m$.   Thus $k\leq m$.  Let
\[
g(\la)= \bar\tau_1 \dots \bar\tau_k \la^{m-k}\prod_{j=1}^k (\la-\tau_j)^2.
\]
This polynomial has degree  $m+k\leq n$ and is $n$-symmetric.
Let $E_t=E+tg$ for $t\in\R$.  Then $E_t$ is $n$-symmetric of degree at most $n$, and
\beq\label{4DEt}
4|D|^2 - |E_t|^2 = 4|D|^2 - |E|^2 - t^2|g|^2- 2 \re (t\bar E g)
\eeq
on $\T$.  Let $\|E\|_\infty $ denote the supremum of $|E|$ on $\T$; then
\begin{align*}
\re(t\bar E g(\la)) &\leq |t \bar E g(\la)| = |tE(\la)| \prod_{j=1}^k |\la-\tau_j|^2 \\
	&\leq |t| \|E\|_\infty   \prod_{j=1}^k |\la-\tau_j|^2
\end{align*}
for $\la\in\T$.  Combine this inequality with equations \eqref{4DEt0} and \eqref{4DEt} to deduce that
\begin{align*}
4|D|^2 - |E_t|^2 &\geq  r\prod_{j=1}^k |\la-\tau_j|^2 \prod_{j=k+1}^n |Q_{\la_j}(\la)|  - t^2|g|^2 -2|t|\|E\|_\infty \prod_{j=1}^k|\la-\tau_j|^2 \\
&=\prod_{j=1}^k |\la-\tau_j|^2 \left\{ r\prod_{j=k+1}^n |Q_{\al_j}(\la)| -(t^2+2|t|\|E\|_\infty) \right\} \\
&\geq \prod_{j=1}^k |\la-\tau_j|^2 \left\{ r M-(t^2+2|t|\|E\|_\infty) \right\}
\end{align*}
on $\T$, where $M = \inf_{\T} \prod |Q_{\al_j}| > 0$.  It follows that for $|t|$ sufficiently small, $4|D|^2 - |E_t|^2 \geq 0$ on $\T$.  Hence, by Theorem \ref{thm4.10}, the functions
\[
h_{\pm t} \df \left( \frac{E_{\pm t}}{D}, \frac{D^{\sim n}}{D}\right)
\]
are rational $\Ga$-inner functions, and clearly $h=\half h_t+\half h_{-t}$.  Thus $h$ is not $s$-extreme.

The case of odd $n$, say $n=2m+1$, requires a slight modification.  Since $2k\leq n=2m+1$, we have $k\leq m$.
Choose $\omega\in\T$ such that 
\[
\omega^2=-\bar\tau_1 \prod_{j=1}^k \bar\tau_j^2
\]
and let
\[
g(\la) = \omega \la^{m-k} (\la-\tau_1)\prod_{j=1}^k (\la-\tau_j)^2.
\]
Then $g$ is an $n$-symmetric polynomial of degree $m+k+1 \leq n$.  As in the even case we define $E_t$ to be $E+tg$ for real $t$, and a similar calculation to the foregoing shows that $4|D|^2-|E_t|^2 \geq 0$ on $\T$ for small enough $|t|$.  The argument concludes as before to show that $h$ is not $s$-extreme. 

($\Leftarrow$) 
 Let $2k>n$ and suppose  that $h=(s,p)=(E/D,D^{\sim n}/D)$ is not $s$-extreme, so that there exist $n$-symmetric polynomials $E_\pm$ of degree at most $n$, different from $E$, such that $h=\half h_++\half h_-$ where
\[
h_\pm=(s_\pm,p)= \left(\frac{E_\pm }{D}, \frac{D^{\sim n}}{D}\right)
\]
are $\Ga$-inner functions.
Let the royal nodes of $h$ in $\T$ be $\tau_1,\dots, \tau_\ell$ with multiplicities $\nu_1,\dots,\nu_\ell$ respectively.   Thus $\nu_1+\dots+\nu_\ell=k$.   Let $g=E_+-E$; then $g$ is a nonzero $n$-symmetric polynomial  and $E_-= E- g$.  

Let the royal polynomials of $h$ and $h_\pm$ be $R$ and $R_\pm$ respectively.  Then
\[
R_\pm=4DD^{\sim n} -(E\pm g)^2 = R -g^2\mp 2 E g.
\]
Hence 
\[
R_- -R_+=4Eg.
\]

By Lemma \ref{tricky}, $h_\pm$ have royal nodes of multiplicity at least $\nu_j$ at $\tau_j$, and so $R_\pm$ vanish to order $2\nu_j$ at $\tau_j$. 
Hence $g$ vanishes to order at least $2\nu_j$ at $\tau_j$.  The degree of $g$ is therefore at least $2\nu_1+\dots +2\nu_\ell = 2k >n$.  This is a contradiction since $\deg (g) \leq n$, and so $h$ is $s$-extreme.

\end{proof}

JIM  ~ AGLER, Department of Mathematics, University of California at San Diego, CA \textup{92103}, USA\\

ZINAIDA A. LYKOVA,
School of Mathematics, Statistics and Physics, Newcastle University, Newcastle upon Tyne
 NE\textup{1} \textup{7}RU, U.K.~~\\

N. J. YOUNG, School of Mathematics, Statistics and Physics, Newcastle University, Newcastle upon Tyne NE1 7RU, U.K.
{\em and} School of Mathematics, Leeds University,  Leeds LS2 9JT, U.K.
\end{document}